\documentclass[11pt]{elsarticle}

\makeatletter
\def\ps@pprintTitle{%
	\let\@oddhead\@empty
	\let\@evenhead\@empty
	\def\@oddfoot{}%
	\let\@evenfoot\@oddfoot}
\makeatother

\usepackage{amsfonts,amssymb,amsmath,amsthm}
\usepackage{comment}
\usepackage{ulem}
\usepackage[margin=1in]{geometry}
\usepackage{setspace}
\usepackage{caption} 
\usepackage{subcaption}
\usepackage{tikz}
\usepackage{parskip}
\usetikzlibrary{mindmap,shadows}
\usepackage[justification=centering]{caption}
\PassOptionsToPackage{hyphens}{url}

\usepackage{amsthm,amsmath,amsfonts,amssymb,bm}
\usepackage[ruled,vlined]{algorithm2e}
\usepackage[flushleft]{threeparttable} 

\usepackage{graphicx,psfrag,epsf}
\usepackage{multicol}
\usepackage{multirow}
\usepackage{color,hyperref,xcolor}
\hypersetup{colorlinks=true,urlcolor=blue,citecolor=purple}
\usepackage{longtable}
\usepackage{natbib}
\usepackage{lineno,hyperref}
\usepackage{cleveref}
\usepackage{float}

\numberwithin{equation}{section}

\newtheorem{definition}{Definition}
\newtheorem{lemma}{Lemma}
\newtheorem{prop}{Proposition}
\newtheorem{theorem}{Theorem}

\newtheorem{corollary}{Corollary}

\newtheorem{remark}{Remark}

\def\Z{\mathbb{Z}}
\def\N{\mathbb{N}}
\def\E{\mathbb{E}}
\def\P{P}

\def\A{\mathcal{A}}
\def\tA{\mathcal{\tilde{A}}}

\numberwithin{theorem}{section}
\numberwithin{lemma}{section}
\numberwithin{corollary}{section}
\numberwithin{definition}{section}
\numberwithin{result}{section}
\numberwithin{remark}{section}
\numberwithin{assumption}{section}
\numberwithin{prop}{section}
\newcommand{\update}[1]{{\color{black} #1}}

\begin{document}
	
	\begin{frontmatter}

		\title{How fast do rumours spread?}
		
		\author[UoE]{Rishideep Roy}
		\author[AU]{Kumarjit Saha}

		\affiliation[UoE]{organization={School of Mathematics, Statistics and Actuarial Science, University of Essex},
			addressline={Wivenhoe Park}, 
			city={Colchester},
			postcode={CO4 3SQ}, 
			state={Essex},
			country={UK}}
		
		\affiliation[AU]{organization={Department of Mathematics, Ashoka University},
			addressline={NH 44, Rajiv Gandhi Education City}, 
			city={Sonepat},
			postcode={131029}, 
			state={HR},
			country={India}}

		\begin{abstract}
			We study a rumour propagation model along the lines of \cite{lebensztayn2008disk} as a long-range percolation model on $\Z$. We begin by showing a sharp phase transition-type behaviour in the sense of exponential decay of the survival time of the rumour cluster in the sub-critical phase.
			
			In the super-critical phase, \update{under the assumption that radius of influence r.v. has $2+\epsilon$ moment finite (for some $\epsilon>0$)}, we show that the rightmost vertex in the rumour cluster has a deterministic speed in the sense that after appropriate scaling, the location of the rightmost vertex converges a.s.\ to a deterministic positive constant. \update{Under the assumption that radius of influence r.v. has $4+\epsilon$ moment finite,} we obtain a central limit theorem for appropriately scaled and centered rightmost vertex. 
			Later, we introduce a rumour propagation model with reactivation. 
			\update{For this section, we work with a family of exponentially decaying i.i.d. radius of influence r.v.'s, and we obtain the speed result 
				for the scaled rightmost position of the rumour cluster. Each of these results is novel, in the sense that such properties have never been established before in the context of the rumour propagation model on $\Z$, to the best of our knowledge.}

		\end{abstract}
		
		\begin{keyword}
			Rumour Percolation \sep Renewal process \sep Speed of rightmost vertex \sep Re-activated Rumour process   
		\end{keyword}
		
	\end{frontmatter}

	\section{Introduction} 
	Interacting particle systems have been an area of interest for a long time. There have been numerous works related to varied forms of it. We are particularly interested in the spread of rumours within networks. 
	We work with the rumour propagation model \update{having} rumours propagating on an infinite graph, which is within the class of long-range percolation models. We study rumour percolation models and we are interested sharp phase transition (in terms of the survival time of the rumour cluster) and speed of the scaled boundary points of the rumour cluster. We want to highlight here that extinction probabilities and phase transitions results have been studied on rumour models for some time. Still, to our knowledge, there has not been any previous work on the speed of propagation of rumours, which is our novelty. In addition, we also obtain a CLT result for the rightmost particle of the rumour cluster. In our work, we further generalise the rumour model to allow for reactivations, which
	basically means that a particle 
	who heard the rumour in the past may spread it again at a future reactivation time point. 
	To the best of our knowledge, this model has never been studied before. \update{The concept of reactivation is influenced by the idea of recurring public opinion and its dissemination, considered for example in \cite{xu2020dynamic}. It is further motivated by the concept of reinfection in case of disease spread models, which have been studied in works such as \cite{yortsos2023model}.}

	There are close resemblances between rumour propagation and epidemic models as observed in \cite{berg1983}. However, they have their differences also, as stated in \cite{daley1965} that the fraction of the population who ultimately hears a rumour is independent of the population size. Both of them consider a finite population. Daley in \cite{daley1967} discussed stochastic and deterministic versions of the rumour process within a finite population. Dunstan in \cite{dunstan1982rumour} extended the work on these models. 
	There have been other works too on the spread of rumours in a finite population, for example, in \cite{frieze1985}. They consider a complete graph with $n$ nodes, and the main object of interest is the time required for the rumour to spread to the entire network. This model assumes that the ones hearing the rumour will always spread it to other vertices that have never heard it before. Pittel in \cite{pittel1987} extends results on this model. 
	
	Lebensztayn et al.\ in \cite{lebensztayn2008disk} consider a long-range percolation model, which they call a disk-percolation model, for the spread of infection. The model can be invariably used for rumour propagation as well. They have random variables termed radii of infection linked to each vertex $v$ in a locally finite and connected graph $\mathcal{G}$, which follow geometric distributions. Initially, only the root is infected, and with time, the infection grows. They study the question of the indefinite spread of infection with positive probability and critical probability based on the parameter of the geometric distribution for the same. Junior et al.\ in \cite{junior2011} consider a similar model on $\N$, which is called the Firework process. There, the propagation of rumours is uni-directional. They also consider a similar process called the reverse firework process, where instead of the propagation of rumours by an active vertex, the rumour is collected by the inactive vertices based on the random variables associated with inactive vertices. Their results consider generalisations over the distributions of the random variables associated with the vertices. Gallo et al.\ in \cite{GGJR2014} use the same models and construct a renewal process associated with the rumours to arrive at their results. Alsadat et al.\ in \cite{alsadat2019rumour} consider a variant of the firework process, where rumours are spread to a new vertex only if it receives them from at least two sources. Junior et al.\ in a survey in  \cite{junior2019rumor} consider various variants of the rumour model and the major works done on them. Some recent works on the rumour model include \cite{esmaeeli2020rumour}, \cite{esmaeeli2023probability}. \update{For a long range percolation model the question of eventual coverage of $[0, \infty)^d$ for $d \geq 1$ has been studied in \cite{athreya2004}. However the question of rumour percolation on $\Z$ (which essentially means complete coverage) is different than the question of eventual coverage.}
	
	The shape of percolation clusters has also been studied for a long period. On $\Z$, the study of the shape of the percolation cluster is equivalent to the study of the speed of propagation of the cluster. In the case of contact processes on $\Z$, another closely related stochastic growth model, it has been shown in \cite{durrett1980growth} that the rightmost particle grows at a linear rate, starting from one active particle at time zero at the origin. In the case of oriented percolation on $\Z^2$, a similar result has been shown in \cite{durrett84}. For generalised oriented site percolation on $\Z^2$, it has been shown that the rightmost infinite open path has a linear speed in \cite{hartarsky2021generalised}. Co-existence in interacting particle systems is another question of interest in the literature. Kostka et al.\ in  \cite{kostka2008word} study competing rumours in social networks. Co-existence in another related percolation model, the frog model, is studied in \cite{deijfen2019}, \cite{roy2021}. 
	
	Below, we describe our \update{initial} rumour propagation model. 
	On the infinite graph $\Z$ we consider an i.i.d. family of non-negative random variables such that these random variables are linked to every vertex $z \in \Z$. We term these variables as radius of influence random variable. A rumour is initiated in one of these vertices, and we are interested in the phenomenon of the indefinite spread of the rumour. In the beginning, we assume that the radius of influence random variable attached to a vertex, does not change over time. As a result, after hearing the rumour, a vertex can spread the rumour only once and that too only at the time of hearing. \update{ Under the assumption that the $1+\epsilon$ moment of radius of influence random variable is finite (for some $\epsilon > 0$), we establish a sharp phase transition behaviour in rumour percolation in terms of sharp decay of the survival time of the subcritical rumour cluster (see \Cref{thm:phase}). Our methods can be successfully extended to analyze some rumour propagation models where individuals are randomly placed on $\Z$ (see Remark \ref{rem:Extensions} for more details).}
	
	The next aspect we study is the speed of growth of the rumour cluster in the supercritical phase. 
	\update{ In \Cref{thm:rightmost-speed}, we show that in the supercritical phase, under the assumption that the $2+\epsilon$ moment of the radius of influence random variable is finite for some $\epsilon > 0$, the appropriately scaled rightmost vertex in the rumour cluster converges to a deterministic positive constant almost surely.}
	This result can be interpreted as the `speed' of the growth of the rumour cluster in the supercritical phase. \update{In \Cref{thm:CLT} under the assumption that the $4+\epsilon$ moment is finite, we obtain the central limit theorem for the appropriately scaled and centred rightmost vertex in the rumour cluster.}
	
	Later, we generalise our model and allow vertices that heard the rumour in the past to spread it at multiple re-activation time points using independent copies of radii of influence random variables. Even though re-activation is a phenomenon that has been considered in different interacting particle systems, to the best of our knowledge, it has not been considered in the rumour propagation model so far. This provides a 
	greater flexibility to individual vertices to spread the rumour. 
	As a result, the question of phase transition in terms of rumour percolation becomes trivial. For this re-activation model, we deal with an i.i.d. family of radius of influence random variables with exponentially decaying tails and we show that the appropriately scaled rightmost vertex in the rumour cluster almost surely converges to a deterministic positive constant as well (see \Cref{thm:restarst_rightmost-speed}). 
	
	For both models, our proof techniques rely heavily on a renewal construction (see \cref{subsec:RenewalConstruction}). Later, in  
	\cref{subsec:Renewal_restart} we modify this renewal construction in a non-trivial manner 
	for the  model with re-activation. The differences between the rightmost vertices of rumour clusters between successive renewals, together with the time gaps between successive renewal steps are shown to be i.i.d. 

	The paper is organised as follows.
	The model we work with initially, is formally defined in \Cref{sec:model}. We exhibit sharpness of phase transition regarding rumour percolation in \Cref{sec:righttail}. The result related to the ‘speed’ of growth of the rumour cluster in the supercritical phase is proved in \Cref{sec:speedrttail}. In the same section we obtain a central limit theorem for suitably centred and scaled position of the right-most vertex of the rumour cluster. In \Cref{sec:reactivation} as a generalisation of the usual rumour model, we introduce a rumour 
	propagation model with independent re-activation clocks attached to each vertex and we obtain a deterministic speed for the growth of the rumour cluster in this set-up. The renewal constructions, especially in the case of the re-activated rumour model, are non-trivial.

	We shall make use of the following lemma in our proofs. This lemma is a standard result of analysis, and as such, the proof is left to the readers. 
	
	\begin{lemma}\label{lem-infprod}
		For a sequence of real numbers $q_n$, where $0 \leqslant q_n <1$, the infinite product $\prod_{n=1}^{\infty} (1-q_n)$ converges to a non-zero number iff $\sum_{n=1}^{\infty} q_n$ converges. 
	\end{lemma}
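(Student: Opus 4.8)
The plan is to pass from the infinite product to a series by taking logarithms, after first isolating the convergence of the partial products as a monotonicity fact. Write $P_N := \prod_{n=1}^{N}(1-q_n)$. Since $0 \leqslant q_n < 1$, every factor $1-q_n$ lies in $(0,1]$, so $(P_N)_{N\geqslant 1}$ is non-increasing and bounded below by $0$; hence $P_N$ converges to some limit $P \in [0,1]$, and the whole content of the lemma is to decide when $P=0$ and when $P>0$.

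For the implication ``$\sum_{n} q_n = \infty \Rightarrow P = 0$'' I would use the elementary bound $1-x \leqslant e^{-x}$, valid for all real $x$. Applying it to each factor gives $P_N \leqslant \exp\!\big(-\sum_{n=1}^{N} q_n\big)$, so letting $N \to \infty$ forces $P = 0$ whenever the series diverges.

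For the converse, ``$\sum_n q_n < \infty \Rightarrow P > 0$'', note first that convergence of the series gives $q_n \to 0$, so there is an index $N_0$ with $q_n \leqslant 1/2$ for all $n \geqslant N_0$. On that range I would invoke the estimate $-\log(1-x) = \sum_{k\geqslant 1} x^k/k \leqslant \frac{x}{1-x} \leqslant 2x$ for $0 \leqslant x \leqslant 1/2$, which yields $\sum_{n \geqslant N_0}\big(-\log(1-q_n)\big) \leqslant 2 \sum_{n \geqslant N_0} q_n < \infty$. Hence the tail product $\prod_{n\geqslant N_0}(1-q_n) = \exp\!\big(-\sum_{n\geqslant N_0}(-\log(1-q_n))\big)$ is strictly positive, and multiplying by the finite initial block $\prod_{n=1}^{N_0-1}(1-q_n)$, which is a product of finitely many strictly positive numbers, shows $P > 0$.

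There is no real obstacle here, as the statement is entirely standard; the only mild point of care is in the converse direction, where one must peel off a finite initial segment (to guarantee $q_n$ is bounded away from $1$) before the logarithmic comparison is legitimate, and then recombine the two pieces.
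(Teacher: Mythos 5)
Your proof is correct. The paper itself gives no proof of this lemma --- it is explicitly left to the reader as a standard fact of analysis --- so there is nothing to compare against; your argument (monotonicity of the partial products, the bound $1-x \leqslant e^{-x}$ for the divergent direction, and the logarithmic comparison $-\log(1-x) \leqslant 2x$ on a tail where $q_n \leqslant 1/2$ for the convergent direction) is precisely the canonical proof, and you have correctly handled the one delicate point of peeling off a finite initial segment before applying the logarithmic estimate.
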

	
	\section{Rumour model}\label{sec:model}
	We define our model of rumour percolation following the \update{lines of \cite{lebensztayn2008disk}, which is also discussed in \cite[Section 3]{junior2019rumor}}.  
	We consider the one-dimensional integer lattice $\mathbb{Z}$ as our intended network on 
	which we wish to study the percolation of a rumour. We denote the set $\Z\cap [0, \infty)$ by $\Z^+$ and $\Z\cap (-\infty, 0]$ by $\Z^-$. We consider 
	$\{ I_z : z \in \Z \}$, a collection of independent and identically distributed (i.i.d.) 
	non-negative random variables, where $I_z$ represents the radius of influence for rumour propagation at the vertex $z \in \Z$. 
	\update{Let $p_1 := \P(I_z \in [0,1)) $ and we assume that $\E(I_z^{1 + \epsilon}) <\infty$ for some $\epsilon > 0$.} 
	Using this collection $\{I_z : z \in \Z\}$, we define the rumour percolation process as follows: 
	\begin{itemize}
		\item[1.] At time $n=0$, only a vertex $x \in \Z$ hears a rumour, becomes active
		and spreads the rumour to all vertices within $I_x$ distance from it.
		The rest of the vertices remain inactive (in terms of propagating the rumour). 
		Without loss of generality, we can assume $x$ to be $0$. 
		Set $\A_0 = \{0\}$, which denotes the set of vertices that have heard a rumour at $0$. 
		
		\item[2.] The set of vertices that have heard the rumour by time $n=1$ is given by 
		$$
		\A_1 := \{z : |z| \leqslant I_0\}.
		$$  
		Vertices in $\A_1$ spread the rumour to new vertices which have not heard the 
		rumour till then, according to their respective radii of influences. 
		Precisely, $z \in \A_1$ spreads rumour to an inactive vertex $u$ in the set 
		$\Z \setminus \A_1 $ at time $2$ if and only if $|u - z|\leqslant I_z$. 
		
		We note that the vertex $0 \in \A_1$, that has heard the rumour in the past, 
		fails to spread the rumour to any new vertex after time $1$.
		The set of newly activated vertices at time $1$, denoted by $\tilde{\A}_1 
		:= \A_1 \setminus \A_0$ may spread the rumour to other inactive vertices that 
		have not heard the rumour till then, and 
		make them active for the next time point. Keeping this in mind, 
		the vertex $0$ becomes effectively inactive from time $1$ onward as it cannot spread the rumour to any new vertex after time $1$ and 
		$\tilde{\A}_1 := \A_1 \setminus \A_0 = \A_1 \setminus  \{ 0\}$ denotes the set of 
		(effectively) active vertices at time $1$. 
		
		Similarly, the set of active vertices at time $2$, which have heard the rumour at time $2$ and can propagate the rumour to new inactive vertices at time $3$, is given by 
		$$
		\tilde{\A}_2 := \{z \in \Z \setminus \A_1 : 
		|z - u| \leqslant I_u \text{ for some }u \in \tilde{\A}_1\}.
		$$
		The set of vertices that have heard the rumour  by time $2$ is given by 
		$$
		\A_2 := \A_1 \cup \tilde{\A}_2.
		$$
		
		\item[3.] More generally, at time $n \geqslant1$, let $\tilde{\A}_n$ denote the set of active 
		vertices at time $n$ and the set of vertices that have heard the rumour by time $n$,
		is given by $\A_n :=  \A_{n-1} \cup \tilde{\A}_n$. An active vertex
		$u \in \tilde{\A}_n$ spreads rumour to all inactive vertices within $I_u$ distance 
		that never heard the rumour in the past and 
		makes them active at time $n+1$. We observe that vertices 
		in the set $\A_{n-1}$ can not spread the rumour to any new inactive vertex 
		at time $n$ or later. Consequently, the set of 
		active vertices at time $n+1$ is given by 
		$$
		\tA_{n+1} := \{z \in \Z \setminus \A_n : |z - u| \leqslant I_u \text{ for some }u \in \tilde{\A}_n\}, 
		$$
		and $\A_{n+1} := \A_{n} \cup \tilde{\A}_{n+1}$ denotes the set of 
		vertices which have heard the rumour by time $n+1$.
	\end{itemize}
	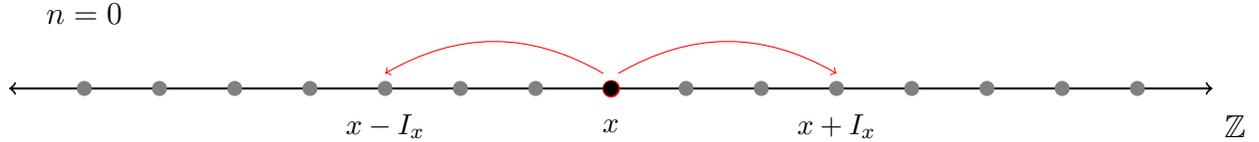
\begin{figure}[!h]
		\begin{center}
			\begin{tikzpicture}
				\draw[thick,<->] (-8,0) -- (8,0);
				\foreach \x in {-7, ..., 7} {
					\path[fill=gray] (\x, 0) circle[radius=1mm];
				}
				\node[] at (8.3,-0.5) {\large $\Z$};
				\path[fill=black,circular glow={fill=red}] (0,0) circle[radius=1mm];
				\node[] at (0,-0.5) { $x$};
				\node[] at (-7,1) {\large $n=0$};
				\draw [red, ->] (-0.1,0.2) to [out=150,in=30,looseness=1] (-3,0.2);
				\node[] at (-3,-0.5) { $x-I_x$};
				\draw [red, ->] (0.1,0.2) to [out=30,in=150,looseness=1] (3,0.2);
				\node[] at (3,-0.5) { $x+I_x$};
			\end{tikzpicture}
		\end{center}
		\caption{Rumour percolation at time $n=0$, with rumour starting at $x \in \Z$}
		\label{fig:rum_t0}
	\end{figure}
	
	\Cref{fig:rum_t0} and \Cref{fig:rum_t1} together give a pictorial representation of the rumour process, starting from $x \in \Z$. Grey dots denote the inactive vertices which have never heard a rumour. Black dots represent vertices that have heard the rumour so far, and black dots with red boundaries denote vertices that have heard the rumour newly, i.e., the set of currently active vertices. In the figure, we can see that  $\tilde{\A}_{0}=\{x\}$ and $I_x=3$. Hence, $\A_1=\{x-3,\ldots, x+3\}$ and $\tilde{\A}_{1}=\A_1 \backslash \{0\}$, as is shown in \Cref{fig:rum_t1}. Assuming $I_{x+1}=2$, $I_{x+2}=3$, $I_{x+3}=1$, $I_{x-1}=3$, $I_{x-2}=2$ and $I_{x-3}=4$, we 
	have $\A_2 = \{x-7, \ldots, x+5 \}$ and $\tA_2 = \{x-7, \ldots, x+5 \} \setminus \A_1$. 
	In this way, the rumour process evolves. 
	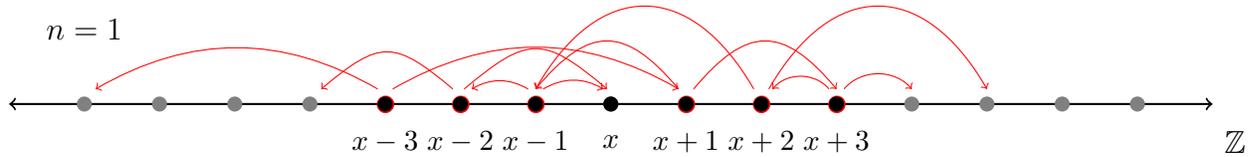
\begin{figure}[!h]
		\begin{center}
			\begin{tikzpicture}
				\draw[thick,<->] (-8,0) -- (8,0);
				\foreach \x in {-7, ..., 7} {
					\path[fill=gray] (\x, 0) circle[radius=1mm];
				}
				\node[] at (8.3,-0.5) {\large $\Z$};
				\foreach \x in {-3,-2,-1,1,2,3} {
					\path[fill=black,circular glow={fill=red}] (\x,0) circle[radius=1mm];}
				\path[fill=black] (0, 0) circle[radius=1mm];
				\node[] at (0,-0.5) { $x$};
				\node[] at (-7,1) {\large $n=1$};
				\draw [red, ->] (.9,0.2) to [out=130,in=50,looseness=1.5] (-1,0.2);
				\draw [red, ->] (1.1,0.2) to [out=50,in=130,looseness=1.5] (3,0.2);
				\draw [red, ->] (2.1,0.2) to [out=60,in=120,looseness=1.5] (5,0.2);
				\draw [red, ->] (1.9,0.2) to [out=120,in=60,looseness=1.5] (-1,0.2);
				\draw [red, ->] (2.9,0.2) to [out=130,in=50,looseness=1] (2.15,0.2);
				\draw [red, ->] (3.1,0.2) to [out=50,in=130,looseness=1] (4,0.2);
				\draw [red, ->] (-2.9,0.2) to [out=30,in=150,looseness=1] (0.9,0.2);
				\draw [red, ->] (-3.1,0.2) to [out=150,in=30,looseness=1] (-6.85,0.2);
				\draw [red, ->] (-0.9,0.2) to [out=30,in=150,looseness=1] (-0.1,0.2);
				\draw [red, ->] (-1.1,0.2) to [out=150,in=30,looseness=1] (-1.85,0.2);
				\draw [red, ->] (-2.1,0.2) to [out=140,in=40,looseness=1.5] (-3.85,0.2);
				\draw [red, ->] (-1.95,0.2) to [out=40,in=140,looseness=1.5] (-0.05,0.2);
				\foreach \y in {1,...,3}
				\node[] at (\y,-0.5) { $x+\y$};
				\foreach \y in {1,...,3}
				\node[] at (-\y,-0.5) { $x-\y$};
			\end{tikzpicture}
		\end{center}
		\caption{Rumour percolation at time $n=1$, with rumour starting at $x \in \Z$ at time $n=0$}
		\label{fig:rum_t1}
	\end{figure}
	
	The set $\A_n$ represents the rumour cluster at time $n$, i.e., the 
	collection of vertices that have heard the rumour by time $n$. 
	We observe that for our model and for all $n \geqslant 1$
	the rumour cluster $\A_n$ is a finite set of the form $[a, b] \cap \Z$
	for some  $a, b \in \Z$ with $a \leqslant 0 \leqslant b$. 
	In what follows, for any set $A$, the notation $\# A$ denotes the cardinality of the set. 
	\begin{definition} 
		\label{def:RumourPercolation}
		The rumour percolation event is defined as 
		$\# \A_n \to \infty$ as $n \to \infty$.
	\end{definition}
	Clearly, for this rumour propagation model, we have rumour percolation iff 
	$\tilde{\A}_{n} \neq \emptyset \text{ for all }n$.
	In other words, rumour percolates if the set of active vertices never becomes an empty set. 
	Otherwise, rumour stops percolating. It is \update{also easy to observe} that if $\tA_{n_0} = \emptyset$ 
	for some $n_0 \in \N$, then $\tA_n = \emptyset$ for all $n \geqslant n_0$. \update{This, however does not necessarily hold for the rumour propagation model with re-activation, introduced in \Cref{sec:reactivation}.}
	In the next section, we present a sharp phase transition result 
	for rumour percolation.


	\section{ Sharp phase transition for Rumour process}\label{sec:righttail}

	In this section, we show that the rumour percolation process exhibits a sharp phase transition. \update{For $n \in \N$ we define the event $B_n$ as
		$$
		B_n := \{ \text{either }n \in \A_m \text{ or }-n \in \A_m \text{ for some }m\geq 1\}.
		$$
		Starting with the set $\A_0 = \{0\}$ the rumour percolation event is defined as the event $\cap_{n = 1}^\infty B_n$. The rumour percolation probability is denoted by $\gamma$. We observe that for the present model $\gamma$ can be simply expressed as
		$$
		\gamma  = 
		\P(\cap_{n = 1}^\infty \bigl ( \tilde{\A}_n \neq \emptyset ) \bigr ). 
		$$
		However, for a rumour propagation model with re-activation, the above expression of $\gamma$ does not hold.}
	
	Let $\tau$ denote the random time when the rumour stops  to percolate, i.e., 
	\begin{align*}
		\tau := \inf\{ n \geqslant 1 : \tilde{\A}_m = \emptyset \text{ for all }m \geq n\},
	\end{align*} 
	and for this model we have
	$\tau = \inf\{ n \geqslant 1 : \tilde{\A}_n = \emptyset \}$.
	The rumour percolates if the r.v.\ 
	$ \tau $ takes the value $ + \infty$. 
	\update{The next proposition gives us a necessary and sufficient condition for positive probability of rumour percolation.
		
		\begin{prop}
			\label{prop:phase_transition}
			For $n \geq 1$ we define $a_n := \prod_{i=0}^{n} P(I_0 \leq i)$. Then 
			$$
			\gamma = 0 \text{ if and only if }\sum_{n=1}^\infty  a_n  = \infty.
			$$
		\end{prop}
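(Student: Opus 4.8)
The idea is to reduce the two‑sided problem to a one–dimensional Markov‑chain computation. Since $\A_n$ is always an interval of $\Z$ containing $0$ that gains at least one vertex at each step for which $\tilde{\A}_n\neq\emptyset$, we have $\{\#\A_n\to\infty\}=\{\max\A_n\to\infty\}\cup\{\min\A_n\to-\infty\}$; the law of $\{I_z\}$ is invariant under $z\mapsto -z$, so $\P(\max\A_n\to\infty)\le\gamma\le 2\,\P(\max\A_n\to\infty)$ and it suffices to prove $\P(\max\A_n\to\infty)>0\iff\sum_{n}a_n<\infty$.

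The core is the one–sided (rightward) rumour $R^+_n$: the rumour started at $0$ but never allowed to spread to the left. Here the vertices newly reached at time $n$ are exactly $(R^+_{n-1},R^+_n]\cap\Z$, so with $M_b:=\max_{0\le k\le b}(k+I_k)$ an easy induction gives $R^+_{n+1}=M_{R^+_n}$ (using the inductive identity $M_{R^+_{n-1}}=R^+_n$). Hence $R^+_n$ increases to the least fixed point of $M$, i.e.
\[
R^+_\infty=\min\{\,b\ge 0:\ I_k\le b-k\ \text{ for all }0\le k\le b\,\}
\]
(with $R^+_\infty=\infty$ if there is no such $b$). Setting $W_b:=M_b-b$ turns this into the Markov recursion $W_0=I_0$, $W_b=\max(W_{b-1}-1,\,I_b)$ on $\{0,1,2,\dots\}$ driven by the i.i.d.\ radii, with $R^+_\infty=\min\{b\ge0:W_b=0\}$. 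Thus $\P(R^+_\infty=\infty)=\P(W\text{ never hits }0)$, and a one–step decomposition at time $0$ identifies this with the escape probability $\P_0(W\text{ never returns to }0)$, which is positive iff the state $0$ is transient for $W$, i.e.\ iff $\sum_{b\ge0}\P_0(W_b=0)<\infty$. Unrolling the recursion started from $W_0=0$ gives $W_b=\max_{1\le k\le b}(I_k-(b-k))$, so
\[
\P_0(W_b=0)=\prod_{k=1}^{b}\P(I_0\le b-k)=\prod_{i=0}^{b-1}\P(I_0\le i)=a_{b-1}\qquad(a_{-1}:=1),
\]
whence $\sum_{b\ge0}\P_0(W_b=0)=1+\sum_{n\ge0}a_n$, which is finite iff $\sum_{n}a_n<\infty$. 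Therefore $\P(R^+_\infty=\infty)>0\iff\sum_n a_n<\infty$.

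It remains to pass from $R^+_\infty$ to $\max\A_n$, and this is the step I expect to be the main obstacle. One coupling inequality is immediate: running both processes with the same radii gives $R^+_n\le\max\A_n$, hence $\P(\max\A_n\to\infty)\ge\P(R^+_\infty=\infty)$. For the converse one must show that the left part of $\A_n$ contributes negligibly to its right end: if $\max\A_n\to\infty$ while $\min\A_n$ stays bounded, then the left fringe is empty from some time on and $\max\A_n$ thereafter evolves as a fresh copy of $R^+_n$, which a.s.\ stabilises when $\sum_n a_n=\infty$ (the state $0$ of $W$ being then recurrent); so on $\{\max\A_n\to\infty\}$ one also has $\min\A_n\to-\infty$, and since a left‑fringe vertex $u$ reaches past the current right end only when $I_u$ exceeds the distance from $u$ to that end — a distance that grows to $\infty$ on this event — a Borel–Cantelli / moment estimate controls these left‑to‑right crossings and shows they cannot by themselves drive $\max\A_n$ to infinity, giving $\P(\max\A_n\to\infty)>0\iff\P(R^+_\infty=\infty)>0$. (Alternatively, this comparison can be organised through the renewal construction of \cref{subsec:RenewalConstruction}.) Combining the three steps yields $\gamma=0\iff\sum_n a_n=\infty$.
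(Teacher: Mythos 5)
Your one-sided analysis is correct and takes a genuinely different route from the paper's. Identifying the rightward-only front with the least fixed point of $b\mapsto M_b=\max_{0\le k\le b}(k+I_k)$, converting this to the Lindley-type chain $W_b=\max(W_{b-1}-1,I_b)$, and reading off survival of the one-sided process as transience of the state $0$ with $\sum_{b}\P_0(W_b=0)=1+\sum_n a_n$ is a clean and correct derivation of the one-sided dichotomy, and the coupling $R^+_n\le \max\A_n$ (a vertex may be activated earlier in the two-sided process, but earlier only helps) settles the direction $\sum_n a_n<\infty\Rightarrow\gamma>0$. The paper instead works directly with the events $B^\pm_n$ whose probabilities equal $a_n$: the first Borel--Cantelli lemma produces an $M$ beyond which no such event occurs, so a rumour pushed past $\pm(M+1)$ cannot stop; for the converse it re-randomises the radii generation by generation so that the events $B^+_n$ become independent and the second Borel--Cantelli lemma applies. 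Your route buys an exact Markov-chain interpretation of $\sum_n a_n$; the paper's buys a converse that never has to compare the two-sided process with a one-sided one.

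That comparison is exactly where your proof has a genuine gap: the step $\sum_n a_n=\infty\Rightarrow\P(\max\A_n\to\infty)=0$ is only a heuristic. Concretely: (i) the assertion that once the left fringe stops interfering the right end ``evolves as a fresh copy of $R^+_n$'' is unjustified as stated --- at that moment the active set is an interval, not a singleton, so you need that $W$ started from an \emph{arbitrary} state a.s.\ hits $0$ when $0$ is recurrent (this requires irreducibility, which does hold here since $\sum_n a_n=\infty$ forces $\P(I_0\le i)>0$ for every $i$, but it must be argued), together with an independence/strong-Markov argument for the unused radii ahead of the front. (ii) More seriously, the Borel--Cantelli control of left-to-right crossings needs $\sum_k\P(I_0\ge k)<\infty$, i.e.\ a first-moment hypothesis that the proposition itself does not carry (it is available only through the standing assumption $\E(I_0^{1+\epsilon})<\infty$, and the paper's proof of this proposition uses no moments at all); and even granting that a.s.\ only finitely many left vertices ever overshoot the right end, each such crossing restarts the right end at a new random position, so one still has to show that the post-crossing one-sided dynamics stalls a.s. The sentence ``they cannot by themselves drive $\max\A_n$ to infinity'' conflates individual rarity of crossings with their cumulative negligibility and does not yet constitute a proof; this half of the argument needs to be rebuilt, either along the lines just indicated or by adopting the paper's generation-wise re-randomisation.
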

		Proposition \ref{prop:phase_transition} is an extension of Theorem 2.1 of  \cite{junior2011} where uni-directional rumour percolation process on $\N$ was studied. 
		\begin{proof}
			For $n \geq 1$ we define the events 
			\begin{align*}
				B^+_n & := \{ u + I_u < n+1 \text{ for all }u \in [0,n]\cap \Z \} \text{ and }\\ 
				B^-_n & := \{u - I_u > - n - 1 \text{ for all }u \in [-n, 0]\cap \Z \}.
			\end{align*}
			From the translation invariance nature of our model it follows that $\P(B^+_n) = \P(B^-_n) = \prod_{i=0}^{n} P(I_0 \leq i) = a_n$. We define the random variables $J^+, J^-$ respectively as 
			\begin{align*}
				J^+ & := \sup\{ n \geq 1 : \text{ the event }B^+_n \text{ occurs} \} \text{ and }\\ 
				J^- & := \sup\{ n \geq 1 : \text{ the event }B^-_n \text{ occurs} \}.
			\end{align*}
			If $\sum_{n=1}^\infty a_n < \infty$, then by Borel-Cantelli lemma we have that $\P(J^+ < \infty) = \P(J^-  < \infty) =1$ which gives us $\P(J^+\vee J^- < \infty) =1$ as well. Choose $M \in \N$ such that $\P(J^+\vee J^- \leq M) > 0$, where $'\vee'$ denotes the maximum of two numbers. 
			We observe that the event $\{ J^+\vee J^- \leq M \}$ depends only on the family $\{ I_u : u \in \Z, u \notin [-M, M] \}$ and on this event this collection 
			$\{ I_u : u \in \Z, u \notin [-M, M] \}$ is such that if the rumour starts propagating from the vertex $M+1$ (-$M+1$) towards the right (left) then it never stops. Because if it stops, that would indicate presence of $M^\prime > M$ such that the event $B^+_{M^\prime}\cup B^-_{M^\prime}$ occurs. This gives a contradiction as such a $M^\prime$ should not exist on the event $\{ J^+\vee J^- \leq M \}$. Now we choose the collection $\{ I_u : u \in \Z, u \in [-M, M] \}$ such that the rumour started from $0$ reaches the vertex $M+1$ and we can always do that with positive probability. This gives us that $\gamma > 0$ if $\sum_{n=1}^\infty a_n < \infty$.  
			
			To complete the proof of Proposition \ref{prop:phase_transition} we need to show that $\gamma = 0$ if $\sum_{n=1}^\infty a_n = \infty$.
			To do that we define an auxiliary construction of the rumour propagation process using i.i.d. collection $\{I^{(n)}_z : z \in \Z, n \in \N\}$ such that for each $n \geq 1$, a vertex $u \in \tA_n$ uses the r.v. $I^{(n+1)}_u$ to propagate the rumour. We observe that the process obtained through this auxiliary construction has the same distribution as the original one. Based this auxiliary construction we define the corresponding versions of the events $B_n^+, B_n^-$ respectively as  
			\begin{align*}
				B^+_n & := \{ u + I^{(n+1)}_u < n+1 \text{ for all }u \in [0,n]\cap \Z \} \text{ and }\\ 
				B^-_n & := \{u - I^{(n+1)}_u > - n - 1 \text{ for all }u \in [-n, 0]\cap \Z \}.
			\end{align*}
			Clearly we have $\P(B^+_n) = \P(B^-_n) = a_n$ and we also have that the events $B^+_n$ for $n \geq 1$ are mutually independent. Therefore, the converse part of Proposition \ref{prop:phase_transition} also follows from Borel-Cantelli lemma. This completes the proof.
		\end{proof}
	}
	
	In addition, if we assume that $\E(I_0^{1+\epsilon}) < \infty$ for some $\epsilon > 0$ then 
	\Cref{thm:phase} shows that there is a sharp phase transition
	for rumour percolation probability.
	
	\begin{prop}
		\label{thm:phase}
		\update{If $\E(I_0^{1+\epsilon}) < \infty$ for some $\epsilon > 0$ then we have 
			\begin{align*}
				\gamma = 
				\begin{cases}
					0 & \text{ if } p_1 > 0\\
					1 & \text{ if } p_1 = 0.
				\end{cases}
		\end{align*}}
		Further, for $p_1 > 0$ there exists $C_0, C_1 > 0$ depending only on $p_1$
		such that for all $n \in \N$ we have 
		\begin{align}
			\label{eq:Tau_Exp}
			\P(\tau > n) \leqslant C_0 e^{(- C_1 n)}.
		\end{align} 
	\end{prop}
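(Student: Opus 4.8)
\emph{The dichotomy.} Since $\E(I_0^{1+\epsilon})<\infty$ forces $\E(I_0)<\infty$, we have $\sum_{i\ge0}\P(I_0>i)\le 1+\E(I_0)<\infty$. If $p_1>0$, every factor satisfies $\P(I_0\le i)\ge\P(I_0\le0)=\P(I_0<1)=p_1>0$, so \Cref{lem-infprod} applied with $q_i=1-\P(I_0\le i)$ shows that $a_\infty:=\prod_{i=0}^{\infty}\P(I_0\le i)>0$; therefore $a_n\ge a_\infty>0$ for every $n$, hence $\sum_n a_n=\infty$, and \Cref{prop:phase_transition} gives $\gamma=0$. If $p_1=0$, then $\P(I_z\ge1)=1$ for all $z$, so almost surely every vertex that ever hears the rumour relays it to both of its neighbours; a one-line induction then gives $\max\A_n\ge n$ and $\min\A_n\le-n$ for all $n\ge1$ on an event of full measure, i.e.\ every vertex is eventually covered and $\gamma=1$.

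\emph{Reducing the tail bound to one side.} Assume $p_1>0$. Recall $\A_m=[\min\A_m,\max\A_m]\cap\Z$ is an interval containing $0$, and that $\{\tau>n\}$ is exactly the event that $\tA_m\ne\emptyset$ for all $m\le n$, i.e.\ that the cluster strictly grows at each of the first $n$ steps. Let $N^+_n$ be the number of steps $m\le n$ with $\max\A_m>\max\A_{m-1}$, and $N^-_n$ the number with $\min\A_m<\min\A_{m-1}$. Each growing step moves at least one endpoint, so on $\{\tau>n\}$ we have $N^+_n+N^-_n\ge n$, hence $\max(N^+_n,N^-_n)\ge n/2$; by the reflection symmetry $z\mapsto-z$ of the model,
\[
\P(\tau>n)\ \le\ 2\,\P\!\big(N^+_\infty\ge\lceil n/2\rceil\big),
\]
where $N^+_\infty\ge N^+_n$ is the total number of steps at which the right endpoint ever increases, finite a.s.\ because $\gamma=0$.

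\emph{A renewal estimate for $N^+_\infty$.} Track the right endpoint through its successive records: put $R_0=0$, $F_0=\{0\}$, and given the current record $R_k$ with its ``right frontier'' $F_k=(R_{k-1},R_k]\cap\Z$ (the vertices right of $R_{k-1}$ activated on the step that produced $R_k$), set $R_{k+1}:=\max_{u\in F_k}(u+\lfloor I_u\rfloor)$ and break the record iff $R_{k+1}>R_k$. Since $R_{k+1}>R_k$ iff $I_{R_k-j}\ge j+1$ for some $0\le j\le|F_k|-1$, and the radii $\{I_u:u\in F_k\}$ are i.i.d.\ copies of $I_0$ independent of the history up to the $k$-th record (which involves only radii of vertices $\le R_{k-1}$), a strong-Markov argument at the $k$-th record gives
\[
\P\big(R_{k+1}>R_k \mid \mathcal F_k\big)=1-\prod_{j=0}^{|F_k|-1}\P(I_0\le j)=1-a_{|F_k|-1}\ \le\ 1-a_\infty\ =:\ q<1 .
\]
Hence $\P(N^+_\infty\ge k)\le q^{\,k}$, and combined with the previous display this yields $\P(\tau>n)\le 2q^{\lceil n/2\rceil}\le C_0e^{-C_1n}$ for suitable $C_0,C_1>0$ (e.g.\ $C_0=2$, $C_1=-\tfrac12\log q$).

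\emph{The main obstacle.} The recursion above overlooks one possibility: the right endpoint can also be advanced by a vertex of the \emph{left} frontier carrying an unusually large radius. Such a stray right-jump at step $t$ forces $\lfloor I_v\rfloor\ge|v|$ for some $v<0$ lying in the left frontier at time $t-1$, and a given $v$ can trigger this at most once, so these events number at most $K:=\#\{v<0:\lfloor I_v\rfloor\ge|v|\}$, a sum of independent Bernoulli variables with $\E K=\sum_{j\ge1}\P(I_0\ge j)\le\E(I_0)<\infty$, which therefore has a (super-)exponential tail. Splitting $N^+_\infty$ into the pure record-count above plus this error term preserves the exponential decay, at the cost of worse constants. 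Making the strong-Markov bookkeeping and this error estimate fully precise is the only genuinely delicate point; the rest is soft. (If the constants $C_0,C_1$ are to depend only on $p_1$ as stated, some extra care with the frontier sizes is needed, since under the bare assumption $\E(I_0^{1+\epsilon})<\infty$ the rate $q$ above deteriorates as the law of $I_0$ is spread out with $p_1$ held fixed.)
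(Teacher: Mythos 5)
Your argument is correct in outline but takes a genuinely different route from the paper's, and the two are worth contrasting. Both ultimately rest on the same quantitative input: your $a_\infty=\prod_{i\geq 0}\P(I_0\leq i)$ is exactly $\P(O=0)$ for the paper's half-line overshoot r.v.\ $O=\sup\{i+I_i: i\in\Z^-\}$, and its positivity (item (i) of \Cref{lem:rumour-block}) is the common engine. The paper, however, sidesteps your ``main obstacle'' entirely: rather than counting right-endpoint advances and classifying which frontier caused each one, it conditions on ${\cal F}_n$ at \emph{every} step and lower-bounds the probability that both frontiers simultaneously fail to activate anything by $\P(O=0)^2$, using that the conditional overshoots of $\tA_n^+$ and $\tA_n^-$ are each stochastically dominated by $O$ and are conditionally independent (disjoint supports of radii). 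This yields $\P(\tau>n)\leq (1-\P(O=0)^2)^n$ directly, with no bookkeeping about stray jumps. Your route --- symmetrize to one side, then bound the number of records of the right endpoint --- is viable, but the step you flag as delicate really is the crux: the clean bound $\P(N^+_\infty\geq k)\leq q^k$ is false as stated, and the repaired version needs $N^+_\infty$ decomposed into at most $1+K$ runs of conditionally geometric length plus the at most $K$ left-caused jumps, followed by a random-sum tail estimate (the paper proves exactly such a lemma, \Cref{lem:RandomSumMoment} item (iii), for other purposes, so the sketch is completable). Two minor remarks: your identification $\P(I_0\leq 0)=p_1$ tacitly replaces $I_0$ by its integer part, which is harmless since the dynamics only see $\lfloor I_u\rfloor$ and the paper makes the same identification in \Cref{prop:phase_transition}; and your closing caveat about the constants is apt but applies equally to the paper --- both arguments produce constants depending on the full law of $I_0$ through $a_\infty=\P(O=0)$, not on $p_1$ alone.
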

	We mention here that \cite[Remark 5]{lebensztayn2008disk} 
	showed that rumour percolation on $\Z$ doesn't happen when $p_1 > 0$. \update{The authors of \cite{lebensztayn2008disk} considered rumour percolation model only for geometrically distributed radii of influence random variables but on more general networks like $\Z^d$ for $d\geq 2$ and on regular trees.} 
	A proof of this can be found from \cite[Theorem 2.1]{LMM08}. 
	It is important to observe that in addition to the phase transition result for rumour percolation, as in \cite[Theorem 2.1]{LMM08}, \Cref{thm:phase} gives a sharp 
	phase transition result \update{in the sense of exponential decay of the amount of time the sub-critical rumour cluster survives. In Remark \ref{rem:Extensions} we will further, apply our method for some other variants of rumour propagation model on $\Z$ where individuals are randomly placed over sites of $\Z$. These models were earlier proposed in \cite[Section 5]{GGJR2014} as possible extensions to be explored. 
		Later we will show that, under the assumption of exponentially decaying i.i.d. radius of influence random variables, this means that the size of the sub-critical rumour cluster decays exponentially.} 
	Before we proceed further, we mention that several qualitative 
	results of this paper involve constants. For more clarity, 
	we will use $C_0$ and $C_1$ to denote two positive constants 
	whose exact values may change from one line to another. 
	The important thing is that both $C_0$ and $C_1$ are universal constants 
	whose values depend only on the parameters of the process.
	
	In order to prove \Cref{thm:phase}, we define an `overshoot' r.v.\
	based on the collection $\{ I_{i}:i \in \Z^- \}$
	as follows:   
	\begin{align}
		\label{def:J_Geometric_Tail}
		O := \sup\{ i + I_{i} : i \in \Z^- \}.
	\end{align}
	Clearly, the overshoot r.v.\ $O$ is non-negative a.s.\ and it 
	represents the amount of overshoot on $\Z^+$, the positive part 
	of the integer line, caused due to the family of r.v.'s $\{I_{i}:i \in \Z^- \}$. 
	\update{Simple application of Borel Cantelli lemma shows that under the assumption of $(1+\epsilon)$ finite moment assumption,  the overshoot r.v. is 
		finite a.s.\ \Cref{lem:rumour-block} analyzes some properties of this overshoot r.v.  
		\begin{lemma}
			\label{lem:rumour-block}
			\begin{itemize}
				\item[(i)] For any $p_1 > 0$ we have $\P( O = 0) > p^\prime $, where $p^\prime >0$; 
				\item[(ii)] If $\P(I_0 \leqslant 1) > 0$ then we have $\P( O \leqslant 1) > 0$;
				
				\item[(iii)] 
				\begin{itemize}
					\item[(a)] If $\E(I_0)^{2 + \epsilon} < \infty$ for some $\epsilon > 0$ then we have $\E(O) < \infty$.
					\item[(b)] If $\E(I_0)^{4 + \epsilon} < \infty$ for some $\epsilon > 0$ then we have $\E(O^2) < \infty$.
					\item[(c)] If $I_0$ has exponentially decaying tails then there exist constants $C_0, C_1 > 0$  such that 
					$$
					\P( O > n) \leqslant C_0 e^{-C_1 n}\text{ for all }n \in \N.
					$$ 
				\end{itemize}
			\end{itemize}
	\end{lemma}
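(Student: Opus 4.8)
The plan is to reduce all four assertions to a single union bound on the right tail of $O$ and then read each one off by a short computation. Throughout this proof we may assume $I_z$ is $\Z^+$-valued (pass to integer parts, which leaves the rumour process unchanged and gives $p_1 = \P(I_0 = 0)$; this alters $O$ by less than $1$, which is irrelevant for the moment and tail statements). Writing the index $i \in \Z^-$ as $i = -k$ with $k \ge 0$, we have $\{O > t\} = \bigcup_{k \ge 0}\{I_{-k} > t+k\}$, so by the i.i.d.\ property and a union bound,
\begin{equation}\label{eq:overshoot-tail}
\P(O > t) \;\le\; \sum_{k \ge 0}\P(I_0 > t+k) \qquad \text{for every } t \ge 0 .
\end{equation}
(That $O < \infty$ almost surely, already noted in the text, is the Borel--Cantelli consequence of $\sum_{k \ge 1}\P(I_0 > k) \le \E(I_0) < \infty$, which holds under the standing hypothesis $\E(I_0^{1+\epsilon}) < \infty$.) Parts (i) and (ii) will follow from independence; part (iii) from feeding the appropriate tail bound on $I_0$ into \eqref{eq:overshoot-tail}.

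For (i) and (ii), observe that $\{O \le s\} = \bigcap_{k \ge 0}\{I_{-k} \le s+k\}$ is an intersection of independent events, so
\[
\P(O = 0) = \P(I_0 = 0)\prod_{k \ge 1}\P(I_0 \le k) = p_1 \prod_{k \ge 1}\P(I_0 \le k) ,
\]
\[
\P(O \le 1) = \P(I_0 \le 1)\prod_{k \ge 1}\P(I_0 \le k+1) .
\]
Every factor in each product is positive, since $j \mapsto \P(I_0 \le j)$ is nondecreasing with $\P(I_0 \le 1) \ge p_1 > 0$; and by \Cref{lem-infprod} each infinite product is bounded away from $0$, precisely because $\sum_{k \ge 1}\P(I_0 > k) \le \E(I_0) < \infty$. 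Combined with the leading factors $p_1 > 0$ (hypothesis of (i)) and $\P(I_0 \le 1) > 0$ (hypothesis of (ii)), this yields (i) with $p' := p_1\prod_{k\ge1}\P(I_0\le k)$, and yields (ii).

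For (iii) I would combine \eqref{eq:overshoot-tail} with Markov's inequality and a sum-versus-integral comparison. If $\E(I_0^{2+\epsilon}) < \infty$ then $\P(I_0 > m) \le C_0 m^{-(2+\epsilon)}$, so for $t \ge 1$, \eqref{eq:overshoot-tail} gives $\P(O > t) \le C_0\sum_{k \ge 0}(t+k)^{-(2+\epsilon)} \le C_1 t^{-(1+\epsilon)}$; integrating, $\E(O) = \int_0^\infty \P(O > t)\,dt \le 1 + C_1\int_1^\infty t^{-(1+\epsilon)}\,dt < \infty$, which is (a). The identical argument with the $(4+\epsilon)$-th moment gives $\P(O > t) \le C_1 t^{-(3+\epsilon)}$ for $t \ge 1$, whence $\E(O^2) = \int_0^\infty 2t\,\P(O > t)\,dt \le 1 + 2C_1\int_1^\infty t^{-(2+\epsilon)}\,dt < \infty$, which is (b). Finally, if $\P(I_0 > m) \le C_0 e^{-C_1 m}$, then \eqref{eq:overshoot-tail} gives $\P(O > n) \le C_0 e^{-C_1 n}\sum_{k \ge 0}e^{-C_1 k} = \frac{C_0}{1-e^{-C_1}}\,e^{-C_1 n}$, which is (c).

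None of these steps is genuinely hard; the content is bookkeeping around \eqref{eq:overshoot-tail}. The two places that deserve a little care are: in (i), that the positivity of $\P(O = 0)$ hinges on $\E(I_0) < \infty$ rather than merely on $p_1 > 0$, since by \Cref{lem-infprod} the product $\prod_{k}\P(I_0 \le k)$ stays bounded away from $0$ if and only if $\E(I_0) < \infty$ --- this is the one place where the standing $(1+\epsilon)$-moment hypothesis is actually invoked; and in (iii), that the surplus $\epsilon$ in each moment assumption is exactly what is needed, first to dominate the discrete sum $\sum_{k \ge 0}(t+k)^{-(2+\epsilon)}$ (resp.\ $(t+k)^{-(4+\epsilon)}$) by a convergent integral in $t$, and then to make the outer integral defining $\E(O)$ (resp.\ $\E(O^2)$) converge.
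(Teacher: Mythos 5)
Your proof is correct and follows essentially the same route as the paper: the independence factorization of $\{O=0\}$ (resp.\ $\{O\le 1\}$) for parts (i)--(ii), and a union bound over the events $\{I_{-i}> t+i\}$ combined with Markov's inequality for part (iii). The only cosmetic differences are that you invoke \Cref{lem-infprod} together with $\E(I_0)<\infty$ where the paper bounds the tail factors of the product directly via the $(1+\epsilon)$-moment, and that in (iii)(a)--(b) you pass through a pointwise tail bound on $O$ and integrate, rather than summing the double series and rearranging as the paper does.
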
}
	\update{\begin{proof}
			For (i), using Markov inequality we obtain 
			\begin{align*}
				\P(O  = 0) & = P(\cap_{i=0}^{\infty} (I_{-i} \leq i)) \\
				& =  p_1\prod_{i=1}^{\infty}
				P(I_{-i} \leq i) \\
				&   =  p_1\prod_{i=1}^{\infty}
				( 1 - P(I_{-i} > i)) \\
				& \geq  p_1\prod_{i=1}^{i_0}( 1 - P(I_{-i} > i))\prod_{i=i_0+1}^{\infty}
				( 1 - \E(I_{-i}^{1 + \epsilon})/i^{1+\epsilon}) \\
				& =  p_1\prod_{i=1}^{i_0}( 1 - P(I_{-i} > i))\prod_{i=i_0+1}^{\infty}
				( 1 - \E(I_{0}^{1 + \epsilon})/i^{1+\epsilon}) > 0,
			\end{align*}
			where $i_0 \in \N$ is chosen such that 
			$\E(I_{0}^{1 + \epsilon})/i^{1+\epsilon} < 1$ for all $i > i_0$.

			For (ii), the argument is exactly the same. The only difference is that we no longer require positivity of $p_1$, and instead, we use $\P( I_0 \leq 1)$, which is positive by assumption.  
			
			We now consider part (a) of item (iii). The event $\{ O = j\}$ implies $I_{-i} = i+j$ for some $i \geq 0$. Recall that for this part we have assumed $\E(I_0^{2 + \epsilon})<\infty$ for some $\epsilon > 0$. As $O$ is a non-negative integer valued r.v., using Markov inequality we obtain  
			\begin{align*}
				\E(O) = & \sum_{n=1}^\infty 
				P(O \geq n) \\ 
				= & \sum_{n=1}^\infty \P( \cup_{i \in \Z^+} I_{-i} \geq n + i ) \\
				\leqslant & \sum_{n=1}^\infty \sum_{i = 0}^\infty \P(  I_{-i} \geq  n + i ) \\
				\leqslant &  \sum_{n=1}^\infty \sum_{i = 0}^\infty \E(I_{-i}^{2+\epsilon})/(n + i)^{2+\epsilon}\\
				= &  \E(I_0^{2+\epsilon})\sum_{n=1}^\infty \sum_{i = 0}^\infty 1/(n + i)^{2+\epsilon}\\
				= &   \E(I_0^{2+\epsilon})\sum_{m=1}^\infty (m+1)/m^{2+\epsilon} < \infty.
			\end{align*}
			The rearrangement in the last step is justified as we are dealing with non-negative terms only.
			
			Part (b) of item (iii) follows from similar argument. Since $\E(I_0^{4+\epsilon})< \infty$, 
			Markov inequality gives us
			\begin{align*}
				\E(O^2) = & \sum_{n=1}^\infty 
				P(O^2 \geq n) \\ 
				= & \sum_{n=1}^\infty \P( \cup_{i \in \Z^+} I^2_{-i} \geq n + i ) \\
				\leqslant & \sum_{n=1}^\infty \sum_{i = 0}^\infty \P(  I_{-i} \geq  \sqrt{n + i} ) \\
				\leqslant &  \sum_{n=1}^\infty \sum_{i = 0}^\infty \E(I_{-i}^{4+\epsilon})/(n + i)^{(4+\epsilon)/2}\\
				= &  \E(I_{0}^{4+\epsilon}) \sum_{n=1}^\infty \sum_{i = 0}^\infty 1/(n + i)^{(4+\epsilon)/2}\\
				= &   \E(I_0^{4+\epsilon}) \sum_{m=1}^\infty (m+1)/m^{(4+\epsilon)/2} < \infty.
			\end{align*}
			This completes the proof.
			
			Finally, for part (c) of item (iii)
			we have assumed that there exist $C_0, C_1 > 0$ such that 
			$$
			P(I_0 > n) \leq C_0 e^{(-C_1 n)} \text{ for all }n.
			$$
			Under this assumption, we obtain
			\begin{align}
				\label{eq:Overshoot_ExpTail}
				\P(O > n) = \P( \cup_{i \in \Z^+} I_{-i} > n + i ) \leqslant 
				\sum_{i = 0}^\infty \P(  I_{-i} > n + i ) \leqslant  \sum_{i = 0}^\infty C_0e^{-C_1(n + i)}
				\leqslant C_2e^{-C_3 n},
			\end{align}
			for some $C_2, C_3 > 0$. 
		\end{proof}
		\begin{remark}
			\label{rem:}
			We comment here that item (iii) of \Cref{lem:rumour-block} does not require the collection $\{ I_i : i \in \Z^-\}$ to  be i.i.d. An uniform moment bound or an uniform tail bound is enough to conclude item (iii), e.g., part (a) of item (iii) holds for a collection $\{ I_i : i \in \Z^-\}$ satisfying $\E(I_i^{2+\epsilon}) \leq M < \infty$ for all $i$.   
		\end{remark}

		For any subset $\A \subset \Z$
		we consider the family of r.v.'s $\{ I_i : i \in \A \cap \Z\}$ and corresponding to this family, we define the `right overshoot' r.v. and `left overshoot' r.v. towards the right side and left 
		side of $\A$  respectively as below:
		$$
		O^{\A, +} := \max \{ (i + I_i) - \sup(\A)  : i \in \A \cap \Z \} \text{ and }
		O^{\A, -} := \max \{ \inf(\A) - (i - I_i)  : i \in \A \cap \Z \}.
		$$
		Clearly, the r.v. $O^{\A,+}$ ($O^{\A,-}$) makes sense only if $\A \subseteq (-\infty, b]$ ($\A \subseteq [a,+\infty)$) for some $b  \in \Z$ ($a  \in \Z$). On the other hand, for any bounded set $\A \subset  \Z$, 
		both these two r.v.s are a.s. finite. We are now ready to prove \Cref{thm:phase}.
	}
	
	\begin{proof}[Proof of \Cref{thm:phase}]
		It is immediate that for the choice $p_1 = 0$, rumour must percolate. 
		Hence, to complete the proof of \Cref{thm:phase}, it 
		suffices to show \cref{eq:Tau_Exp}. Set ${\cal F}_0 = \{\emptyset, \Omega \}$ and for $n \geq 1$ we define ${\cal F}_n := \sigma( I_z : z \in \A_{n-1} )$.
		It is not difficult to see that the rumour propagation process 
		$\{\A_n : n \geq 0\}$ is adapted to the filtration $\{{\cal F}_n : n \in \N \}$.
		
		For $n \in \N$ let $\tA^+_n := \tA_n \cap \Z^+$ denote the set of active vertices at time point 
		$n$ on the positive side. Similarly, the set $\tA^-_n$ is defined. 
		Both the random sets $\tA^+_n$ and $\tA^-_n$ are ${\cal F}_{n}$ measurable.
		Given the $\sigma$-field ${\cal F}_{n}$, 
		we consider the overshoot r.v.'s $O^{\tA^+_n,+}$ and $O^{\tA^-_n,-}$. 
		By construction, for all $n \geqslant1$ we have $\tA^+_n \cap \tA^-_n = \emptyset$ a.s. Therefore, given the $\sigma$-field ${\cal F}_{n}$, 
		the overshoot r.v.'s $O^{\tA^+_n, +}$ and $O^{\tA^-_n, -}$ are supported on disjoint sets of influence r.v.'s, and for any two Borel sets $B_1, B_2$ we have 
		\begin{align}
			\label{eq:Ind_Overshoot}
			\P\bigl ( O^{\tA^+_n,+} \in B_1, O^{\tA^-_n, -} \in B_2 \mid {\cal F}_{n} \bigr ) 
			= \P\bigl ( O^{\tA^+_n,+} \in B_1 \mid {\cal F}_{n} \bigr )
			\P\bigl ( O^{\tA^-_n,-} \in B_2 \mid {\cal F}_{n} \bigr ).
		\end{align}
		Further, given ${\cal F}_{n}$ the conditional distributions of both the overshoot r.v.'s, $O^{\tA^+_n,+}$ and $O^{\tA^-_n,-}$, are 
		stochastically dominated by the overshoot r.v.\ $O$ as defined in \cref{def:J_Geometric_Tail} in the following sense: 
		\update{\begin{align}\label{eq:StochDom_Overshoot}
				& \P\bigl ( O^{\tA^+_n,+} = 0 \mid {\cal F}_{n} \bigr )\wedge 
				\P\bigl ( O^{\tA^-_n,-} = 0 \mid {\cal F}_{n} \bigr )\geqslant \P(O =  0)  \text{ and }\nonumber\\
				&  \P\bigl ( O^{\tA^+_n,+}  > m \mid {\cal F}_{n} \bigr )\vee 
				\P\bigl ( O^{\tA^-_n,-} > m \mid {\cal F}_{n} \bigr )\leqslant \P(O > m) \text{ for any }m \in \N,   
			\end{align}
			where $'\wedge'$ and $'\vee'$ denotes the minimum and maximum respectively.}
		
		Given $\tau > n$, the r.v.\ $\tau$ takes the value $n+1$ if and only if
		the event $\{ O^{\tA^+_n,+} = O^{\tA^-_n,-} = 0\}$ occurs. 
		Therefore, using \cref{eq:Ind_Overshoot} and \cref{eq:StochDom_Overshoot} we obtain  
		\update{\begin{align*}
				\P(\tau = n + 1 \mid \tau > n, {\cal F}_n) &  = \P(O^{\tA^+_n,+} = 0, O^{\tA^-_n,-} = 0 
				\mid \tau > n, {\cal F}_n) \nonumber \\ 
				&  = \P(O^{\tA^+_n,+} = 0 \mid \tau > n, {\cal F}_n)\P(O^{\tA^-_n,-} = 0 \mid \tau > n, {\cal F}_n) \\
				& \geqslant \P(O = 0)^2  > 0.
		\end{align*}} 
		This gives us  
		\update{$$
			P(\tau > n) \leqslant (1- \P(O = 0)^2)^n,
			$$} 
		and completes the proof. 	
	\end{proof} 
	
	\update{The above result allows us to obtain moment bounds for the
		size of the sub-critical rumour cluster.
		Let $M = \# \A_{\tau}$ denote the size of the rumour cluster. Under the assumption that $\E(I_0^{1 + \epsilon}) < \infty$ for some $\epsilon > 0$, we proved that $M$ is finite a.s. if $p_1 > 0$. 
		
		\begin{prop}
			\label{prop:CusterSizeMomentBound}
			\begin{itemize}
				\item[(i)] If $p_1 > 0$ and $\E(I_0^{2 + \epsilon}) < \infty$ for some $\epsilon > 0$, then $M$ has finite expectation. 
				\item[(ii)] If $p_1 > 0$ and $\E(I_0^{4 + \epsilon}) < \infty$ for some $\epsilon > 0$, then $\E(M^2) < \infty$.
				\item[(iii)] If $p_1 > 0$ and there exist $C_0, C_1> 0$ such that $\P(I_0 > n) < C_0 e^{(-C_1 n)}$ for all $n \in \N$, then $M$ also has exponentially decaying tail.
			\end{itemize}
		\end{prop}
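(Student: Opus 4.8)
The plan is to dominate the cluster size $M=\#\A_\tau$ by a random sum of one‑sided overshoot variables — one per time step of the process — and then read off moment and tail information from the estimates on the overshoot r.v.\ $O$ in \Cref{lem:rumour-block} together with the exponential control on $\tau$ from \Cref{thm:phase}. For the structural bound, recall that $\A_n=[L_n,R_n]\cap\Z$ with $L_n:=\inf\A_n\leqslant 0\leqslant R_n:=\sup\A_n$, so $M=R_\tau-L_\tau+1$, and that the boundary positions move only at times $n$ with $\tA_n\neq\emptyset$, i.e.\ for $n<\tau$. At such a time the rightward increment is controlled by the reach of the currently active vertices: a vertex $u\in\tA_n^+$ overshoots $R_n=\sup\tA_n^+$ by at most $O^{\tA_n^+,+}$, and a short case analysis shows that if $u\in\tA_n^-$ has $u+I_u>R_n$ then $u-I_u<L_n=\inf\tA_n^-$, whence $(u+I_u)-R_n\leqslant I_u\leqslant O^{\tA_n^-,-}$; thus $R_{n+1}-R_n\leqslant O^{\tA_n^+,+}+O^{\tA_n^-,-}$, and symmetrically for $L_n-L_{n+1}$. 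Setting $X_n:=O^{\tA_n^+,+}+O^{\tA_n^-,-}$ for $n<\tau$ and $X_n:=0$ otherwise (so $X_0=2I_0$) and summing increments, I obtain a universal constant $C_0$ with
$$
M\;\leqslant\;C_0(I_0+1)+C_0\sum_{n=1}^\infty X_n\,\mathbf{1}_{\{n<\tau\}}.
$$
The facts I would use repeatedly, all from the proof of \Cref{thm:phase}, are: $\{n<\tau\}\in{\cal F}_n$; $X_n$ is ${\cal F}_{n+1}$‑measurable; and, by \cref{eq:Ind_Overshoot}--\cref{eq:StochDom_Overshoot}, conditionally on ${\cal F}_n$ the increment $X_n$ is stochastically dominated by $O'+O''$ with $O',O''$ independent copies of $O$, so that $\E\bigl(X_n^k\mid{\cal F}_n\bigr)\leqslant\E\bigl((O'+O'')^k\bigr)$ for every $k\geqslant 1$.

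\emph{Parts (i) and (ii).} For (i), conditioning on ${\cal F}_n$ gives $\E\bigl(X_n\mathbf{1}_{\{n<\tau\}}\bigr)=\E\bigl(\mathbf{1}_{\{n<\tau\}}\E(X_n\mid{\cal F}_n)\bigr)\leqslant 2\E(O)\P(\tau>n)$; since $\E(O)<\infty$ by \Cref{lem:rumour-block}(iii)(a) (using $\E(I_0^{2+\epsilon})<\infty$) and $\sum_{n\geqslant 1}\P(\tau>n)=\E(\tau)-1<\infty$ by \cref{eq:Tau_Exp}, summing the displayed inequality proves $\E(M)<\infty$. For (ii), the same conditioning gives $\bigl\|X_n\mathbf{1}_{\{n<\tau\}}\bigr\|_2^2\leqslant\E\bigl((O'+O'')^2\bigr)\P(\tau>n)$, finite by \Cref{lem:rumour-block}(iii)(b) (using $\E(I_0^{4+\epsilon})<\infty$); Minkowski's inequality then yields $\|M\|_2\leqslant C_0(\|I_0\|_2+1)+C_0\sum_n\|X_n\mathbf{1}_{\{n<\tau\}}\|_2\leqslant C_0(\|I_0\|_2+1)+C_1\sum_n\sqrt{\P(\tau>n)}$, which is finite because $\P(\tau>n)$ decays exponentially in $n$.

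\emph{Part (iii).} Now $O$ has an exponential tail by \Cref{lem:rumour-block}(iii)(c), hence so does $V:=O'+O''$, and $\phi(\lambda):=\E(e^{\lambda V})$ is finite for all small $\lambda>0$. Fix $c>0$. For $t$ large the structural bound forces $\{M>t\}$ into
$$
\bigl\{I_0>t/(3C_0)-1\bigr\}\ \cup\ \{\tau>ct\}\ \cup\ \Bigl\{\textstyle\sum_{n=1}^{\lceil ct\rceil}X_n\mathbf{1}_{\{n<\tau\}}>t/(3C_0)\Bigr\};
$$
the first event has exponentially small probability by the hypothesis on $I_0$ and the second by \cref{eq:Tau_Exp}. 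For the third, from $\P\bigl(X_n\mathbf{1}_{\{n<\tau\}}>m\mid{\cal F}_n\bigr)\leqslant\P(V>m)$ for all $n,m$ and a standard domination argument for adapted sequences (conditioning and induction on $k$), $\sum_{n=1}^{\lceil ct\rceil}X_n\mathbf{1}_{\{n<\tau\}}$ is stochastically dominated by a sum of $\lceil ct\rceil$ i.i.d.\ copies of $V$, and a Chernoff bound for that sum gives, for fixed small $\lambda>0$, probability at most $\phi(\lambda)\,e^{-t\,(\lambda/(3C_0)-c\log\phi(\lambda))}$ — exponentially small once $c$ is chosen small enough that $\lambda/(3C_0)>c\log\phi(\lambda)$. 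Combining the three bounds proves (iii); the same estimate yields the exponential decay of the sub‑critical cluster size mentioned in the discussion after \Cref{thm:phase}.

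\emph{Main obstacle.} The delicate point is the structural bound and, more precisely, the fact that $M$, $\tau$, and the successive overshoots are all functionals of the same field $\{I_z\}$ and are dependent, so no naive renewal/Wald identity applies. The resolution is to never multiply an overshoot by a $\tau$‑indicator unconditionally: one always conditions on ${\cal F}_n$, exploiting that $\{n<\tau\}\in{\cal F}_n$ while the overshoot incurred at step $n$ is, given ${\cal F}_n$, dominated by $O'+O''$ irrespective of the current cluster's shape. For part (iii) this moment‑level domination must additionally be upgraded to a genuine stochastic domination of the partial sums by an i.i.d.\ sum, which is the one slightly non‑routine ingredient.
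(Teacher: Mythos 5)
Your overall strategy is the paper's: dominate $M$ by a random sum of per-step overshoots, each conditionally dominated by copies of $O$ given ${\cal F}_n$, and then extract moments and tails from \Cref{lem:rumour-block} together with the exponential decay of $\P(\tau>n)$ from \Cref{thm:phase}. (The paper writes $M\leq M_++M_-$, dominates $M_+$ by $\sum_{m=1}^{\tau}O_m$ with i.i.d.\ copies of $O$, and quotes Wald, Blackwell--Girshick and a random-sum tail lemma, i.e.\ \Cref{lem:RandomSumMoment}; your term-by-term conditioning plus Minkowski and a Chernoff bound is a more self-contained execution of the same computation, and you are right that the upgrade from conditional tail domination to stochastic domination of the partial sums by an i.i.d.\ sum is the one non-routine ingredient --- the paper asserts it without proof.) However, one step of your structural bound is wrong as written. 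For $u\in\tA_n^-$ with $u+I_u>R_n$ you claim $(u+I_u)-R_n\leq I_u\leq O^{\tA_n^-,-}$. The second inequality fails: by definition $O^{\tA_n^-,-}=\max\{\inf(\tA_n^-)-(i-I_i): i\in\tA_n^-\}$ measures the reach to the \emph{left} of $\inf(\tA_n^-)$, and the term contributed by $u$ equals $I_u-(u-\inf(\tA_n^-))$, which can be far smaller than $I_u$ when $u$ is far from the left end of $\tA_n^-$. Concretely, if $\tA_n^-=\{-100,\dots,-9\}$, $R_n=2$, $I_{-9}=20$ and $I_i=0$ for every other $i\in\tA_n^-$, then $R_{n+1}-R_n\geq 9$ while $O^{\tA_n^-,-}=0$, so the pathwise inequality $R_{n+1}-R_n\leq O^{\tA_n^+,+}+O^{\tA_n^-,-}$, and hence your displayed bound on $M$, is false.

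The error is local and fixable. The rightward push of the negative-side active vertices should be controlled by the \emph{right} overshoot $O^{\tA_n^-,+}$: since $\sup(\tA_n^-)\leq -1\leq R_n$, one has $(u+I_u)-R_n\leq (u+I_u)-\sup(\tA_n^-)\leq O^{\tA_n^-,+}$, and the argument behind \cref{eq:StochDom_Overshoot} applies verbatim to show that, given ${\cal F}_n$, the r.v.\ $O^{\tA_n^-,+}$ is also stochastically dominated by $O$. Even more simply, as the paper implicitly does, note that $\tA_n\subseteq(-\infty,r_n]\cap\Z$ and the radii attached to $\tA_n$ are unrevealed given ${\cal F}_n$, so the full rightward increment $r_{n+1}-r_n=\#\tA^+_{n+1}$ is conditionally dominated by a \emph{single} copy of $O$. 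With either repair, your arguments for (i), (ii) and (iii) go through as described.
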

		\begin{proof}
			Set $p_1 > 0$.
			Let $\# \A^+_{\tau} = M_+$ and $\#\A^-_{\tau} = M_-$. Clearly $M \leq M_+ + M_-$. From the translation invariance nature of our model it follows that both these two r.v.'s, 
			$M_+$ and $M_-$ are identically distributed. Therefore, it is enough to prove \Cref{prop:CusterSizeMomentBound} in terms of $M_+$. 
			
			Next, we observe that for any $n \geq 0$ given $\mathcal{F}_n$, the increment $\# \tA^+_{n +1}$ is stochastically dominated by the overshoot r.v. $O$. 
			Therefore, the r.v. $M_+$ is stochastically dominated by the random sum $\sum_{m = 1}^\tau O_m$ where $\{ O_m : m \in \N\}$ gives a family of i.i.d. copies of $O$. With the help of item (iii) of Lemma \ref{lem:rumour-block}, the proof of \Cref{prop:CusterSizeMomentBound} for $M_+$  readily follows from the following corollary.
		\end{proof}
		
		\begin{corollary}
			\label{lem:RandomSumMoment}
			Consider a collection of i.i.d. r.v.'s $\{X_i : i \geq 1\}$ and let $\eta$ be a non-negative integer valued r.v. Consider the random sum $Z := \sum_{i=1}^\eta X_i$ 
			\begin{itemize}
				\item[(i)] If $X_1$ and $\eta$ both have finite expectation then $Z$ has finite expectation.
				\item[(ii)] If $\E(X^2_{1}) < \infty$ and $\eta$ is of finite expectation, then $Z$ has finite second moment.
				
				\item[(iii)] If there exist $C_0, C_1 > 0$ such that $\P(X_1 > n)\vee \P(\eta > n) \leq C_0 e^{(-C_1 n)}$ for all $n \geq 1$, then $Z$ has exponentially decaying tail.
			\end{itemize}
		\end{corollary}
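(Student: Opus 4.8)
The plan is to reduce all three parts to the case of non-negative summands, and then to apply Wald-type moment identities for parts (i)--(ii) and a Chernoff bound for part (iii). For (i) and (ii), note that $Z$ has finite first (resp.\ second) moment as soon as $\sum_{i=1}^{\eta}|X_i|$ does, and that $|X_1|$ has finite first (resp.\ second) moment whenever $X_1$ does; so we may assume $X_i\ge 0$. For (iii) only the upper tail of $Z$ is at issue, and $Z\le\sum_{i=1}^{\eta}X_i^{+}$ with $X_i^{+}:=\max\{X_i,0\}$ satisfying $\P(X_1^{+}>n)=\P(X_1>n)\le C_0e^{-C_1n}$, so again we may assume $X_i\ge 0$. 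Throughout we use the convention that an empty sum is $0$, and we take $\eta$ to be independent of $\{X_i\}$; the argument is unchanged when instead $\eta$ is a stopping time for a filtration with respect to which $X_n$ is independent of the past, via the corresponding forms of Wald's identities (this is the form in which the corollary is applied, with $\eta=\tau$).

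For (i), with $X_i\ge0$, Tonelli's theorem and independence give
$$
\E(Z)=\E\Bigl(\sum_{i\ge1}X_i\mathbf{1}_{\{\eta\ge i\}}\Bigr)=\sum_{i\ge1}\E(X_i)\,\P(\eta\ge i)=\E(X_1)\,\E(\eta)<\infty .
$$
For (ii), conditioning on $\eta$ and using $\E(X_1^2)<\infty$ gives $\E(Z^2\mid\eta)=\eta\,\mathrm{Var}(X_1)+\eta^2\,\E(X_1)^2$, hence
$$
\E(Z^2)=\E(\eta)\,\mathrm{Var}(X_1)+\E(\eta^2)\,\E(X_1)^2 .
$$
This is finite once $\eta$ has a finite second moment, which is the condition one expects here in view of (i) and (iii), and which holds in all the applications since there $\eta=\tau$ has an exponential tail by \Cref{thm:phase}.

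The substantive part is (iii). Since $X_1\ge0$ with $\P(X_1>n)\le C_0e^{-C_1n}$, integrating the tail shows that $\phi(s):=\E(e^{sX_1})<\infty$ for every $s<C_1$; fix $s_0:=C_1/2$ and set $\lambda:=\log\phi(s_0)$, which lies in $[0,\infty)$ (if $\lambda=0$ then $X_1\equiv0$ and $Z\equiv0$, so assume $\lambda>0$). Put $\delta:=s_0/(2\lambda)>0$. For $t>0$, writing $S_m:=X_1+\dots+X_m$ and splitting on whether $\eta$ exceeds $\delta t$,
$$
\P(Z>t)\ \le\ \P(\eta>\delta t)\ +\ \P\bigl(S_{\lfloor\delta t\rfloor}>t\bigr),
$$
where the second inclusion uses that on $\{\eta\le\delta t\}$ one has $\eta\le\lfloor\delta t\rfloor$ and hence $Z\le S_{\lfloor\delta t\rfloor}$ by non-negativity of the $X_i$ (no independence between $\eta$ and $\{X_i\}$ is needed here). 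The first term is at most $C_0e^{-C_1\delta t}$ for $t\ge1/\delta$ by hypothesis, while the Chernoff bound gives
$$
\P\bigl(S_{\lfloor\delta t\rfloor}>t\bigr)\ \le\ e^{-s_0t}\,\phi(s_0)^{\lfloor\delta t\rfloor}\ \le\ e^{-s_0t+\lambda\delta t}\ =\ e^{-s_0t/2}.
$$
Combining, $\P(Z>t)\le C_0e^{-C_1\delta t}+e^{-s_0t/2}\le C_2e^{-C_3t}$ for suitable $C_2,C_3>0$ and all $t\ge0$ (the small-$t$ range being absorbed into $C_2$), which is the claim.

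The main obstacle, such as it is, is confined to part (iii): one must choose the split parameter $\delta$ against the exponential-moment growth rate $\lambda=\log\phi(s_0)$ so that the resulting Chernoff exponent $s_0-\lambda\delta$ stays strictly positive, and one must keep track of the fact that under the present hypotheses only the \emph{upper} tail of $X_1$ is controlled, so the reduction has to be to $X_i^{+}$ rather than to $|X_i|$. Once this is set up, the truncation step $\{Z>t,\,\eta\le\delta t\}\subseteq\{S_{\lfloor\delta t\rfloor}>t\}$ is elementary because partial sums of non-negative variables are monotone, and (i)--(ii) are immediate from the standard identities above.
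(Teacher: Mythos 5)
Your proposal is correct, and for parts (i) and (ii) it follows essentially the same route as the paper, which simply invokes Wald's equation and the Blackwell--Girshick equation without further detail. Two points of comparison are worth recording. First, for part (iii) the paper gives no argument at all beyond a citation to Lemma 2.7 of \cite{RSS16A}; your truncation-plus-Chernoff proof (splitting on $\{\eta\le\delta t\}$, bounding $\P(S_{\lfloor\delta t\rfloor}>t)\le e^{-s_0 t}\phi(s_0)^{\lfloor\delta t\rfloor}$, and tuning $\delta$ against $\lambda=\log\phi(s_0)$ so the exponent stays positive) is a complete, self-contained substitute, and your observation that the inclusion $\{Z>t,\ \eta\le\delta t\}\subseteq\{S_{\lfloor\delta t\rfloor}>t\}$ needs no independence between $\eta$ and the $X_i$ is genuinely useful here, since in the paper's applications (e.g.\ $M_+\le\sum_{m=1}^{\tau}O_m$) the number of summands is not independent of the summands. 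Second, you correctly flag that part (ii) as stated is not provable: with $X_i\equiv 1$ and $\eta$ of finite mean but infinite second moment one has $Z=\eta$ and $\E(Z^2)=\infty$, so the hypothesis ``$\eta$ is of finite expectation'' must be strengthened to $\E(\eta^2)<\infty$ --- which is exactly what the Blackwell--Girshick identity the paper cites requires, and which holds in every application in the paper since there $\eta$ has an exponentially decaying tail. This is a defect of the statement, not of your argument. The only minor caveat in your write-up is the reliance on $\eta$ being independent of $\{X_i\}$ (or a stopping time) for the Wald-type identities in (i)--(ii); you state this assumption explicitly and it matches the paper's implicit usage, so nothing is lost.
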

		
		The first and the second parts of the Corollary follow from the Wald's equation and the Blackwell-Girshick equation. 
		The third part is also a well known result, e.g., we refer the reader to Lemma 2.7 of \cite{RSS16A}.}
	
	\update{ Before ending this section below application of our methods for some related models.
		\begin{remark}
			\label{rem:Extensions}
			\begin{itemize}
				\item[(a)] Consider an i.i.d. collection of Bernoulli random variables $\{X_u : u \in \Z\}$ and assume that there is an individual at $u$, who can participate for rumour propagation, if and only it $X_u = 1$. This model was introduced in \cite{athreya2004} where the question of complete coverage has been  studied. This problem was also mentioned in Section 5 of \cite{GGJR2014} as a possible direction for extension. For this model under the assumption that $\E(I^{1+\epsilon}_0)$ is finite, our method applies as well and gives us that irrespective of the value of $p_1 \in [0,1]$, percolation won't happen. Further, the survival time of the subcritical rumour cluster decays exponentially too.  This follows from the observation that if $\P(I_0 \leq m) > 0$ for some $m \in \N$ then the same argument as in Lemma \ref{lem:rumour-block} gives us $\P( O \leq m) > 0$ as well. We note that for any $n \geq 1$, the occurrence of the event 
				$$
				E_n := \{ I_u < m \text{ for all }m \in \tA_n\}\cap \{ X_v = 0 \text{ for all }v\notin \A_n \text{ with }|v-w|\leq m \text{ for some }w \in \A_n \}
				$$
				implies rumour percolation stops, and at each step probability of this event is uniformly bounded below.    
				
				\item[(b)] Authors of \cite{athreya2004} analysed a more general model compared to the previous one where individuals are placed on $\Z$ as a $\{0,1\}$-valued time-homogeneous Markov chain. This model was mentioned in Section 5 of \cite{GGJR2014} as a future question to explore. As long as we have 
				$\P(X_1 = 0 \mid X_0 = 0)\wedge \P(X_1 = 0 \mid X_0 = 1) > 0$, our method applies in this set up also (under the assumption $\E(I^{1+\epsilon}_0)$ is finite) and gives the same set of conclusions.
			\end{itemize}    
		\end{remark}
	}
	
	\section{Speed of the rightmost vertex in rumour cluster in the supercritical phase}
	\label{sec:speedrttail}

	In what follows, we will continue to work in the supercritical phase, i.e.,
	we assume that  $p_1 = 0$. 
	\update{In addition, for the next result we assume that 
		\begin{align}
			\label{eq:AssumptionSecondMoment}
			\E(I_0^{2 + \epsilon}) < \infty\text{ for some }\epsilon > 0.
	\end{align}}
	
	Let $r_n = \max \A_n$ and $l_n = \min \A_n$ 
	respectively denote the rightmost and the leftmost vertex in the set $\A_n$.
	Clearly, for any $n \geqslant1$, both $r_n$ and $l_n$ are a.s. finite r.v.'s and 
	the rumour cluster is represented by the set $[l_n, r_n]\cap \Z$. Under the assumption $p_1 = 0$, we have
	$[-n, n] \subseteq [l_n, r_n]$  for all $n \geqslant1$ a.s.
	The main result of this section is the following: 
	\begin{theorem}
		\label{thm:rightmost-speed}
		\update{Fix any $\epsilon > 0$. Under the assumption that the radius of influence r.v. has $2+\epsilon$ moment finite, there exists a deterministic constant $\mu \geqslant 1$ such that we have
			$$
			\lim_{n \to \infty} \frac{r_n}{n} = \mu \quad  \text{ a.s.}
			$$ }
	\end{theorem}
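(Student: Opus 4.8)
The plan is to prove the result by a renewal (regeneration) argument for the rightmost‑vertex sequence $(r_n)_{n\geq0}$, building on the renewal construction of \cref{subsec:RenewalConstruction}. That construction produces random times $0\leq\rho_1<\rho_2<\cdots$, the \emph{renewal times}, such that conditionally on the process up to $\rho_k$ the recentred right boundary $(r_{\rho_k+m}-r_{\rho_k})_{m\geq0}$ is distributed as a fresh copy of the right‑boundary dynamics and is independent of the past; equivalently, a renewal at time $n$ certifies that the active right frontier has pinched back to a standard configuration \emph{and} that no already‑explored vertex lying to its left can ever influence the right frontier again. Consequently the pairs $(\Delta\rho_k,\Delta r_k):=(\rho_{k+1}-\rho_k,\ r_{\rho_{k+1}}-r_{\rho_k})$, $k\geq1$, are i.i.d. Writing $\rho_k=\rho_1+\sum_{j=1}^{k-1}\Delta\rho_j$ and $r_{\rho_k}=r_{\rho_1}+\sum_{j=1}^{k-1}\Delta r_j$, the strong law of large numbers gives, provided the relevant means are finite,
\[
\frac{\rho_k}{k}\longrightarrow\E(\Delta\rho_1),\qquad \frac{r_{\rho_k}}{k}\longrightarrow\E(\Delta r_1)\qquad\text{a.s.},
\]
so that $r_{\rho_k}/\rho_k\to\mu:=\E(\Delta r_1)/\E(\Delta\rho_1)$. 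Since $n\mapsto r_n$ is non‑decreasing, sandwiching $r_n/n$ between $r_{\rho_k}/\rho_{k+1}$ and $r_{\rho_{k+1}}/\rho_k$ for $\rho_k\leq n<\rho_{k+1}$ and using $\rho_{k+1}/\rho_k\to1$ then upgrades this to $r_n/n\to\mu$ a.s.

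The next step is to check that both means are finite. For $\E(\Delta\rho_1)<\infty$: uniformly in the past, at each step the right frontier returns to the standard (pinched) configuration with probability bounded below by a positive constant; this rests on the stochastic domination of the frontier increments by the overshoot random variable $O$ of \cref{def:J_Geometric_Tail}, together with $\P(O\leq m_0)>0$ for a suitable $m_0$ (as in the estimates of \Cref{lem:rumour-block}). Hence $\Delta\rho_1$ is stochastically dominated by a geometric variable, so $1\leq\E(\Delta\rho_1)<\infty$. For the spatial increment, over one cycle $\Delta r_1=\sum_{i=\rho_1+1}^{\rho_2}(r_i-r_{i-1})$, and each single‑step increment $r_i-r_{i-1}$---which is the largest overshoot of the active frontier $\tA_{i-1}$ beyond $r_{i-1}$---is, conditionally on the history $\mathcal F_{i-1}$, stochastically dominated by $O$, because the radii $\{I_u:u\in\tA_{i-1}\}$ form, conditionally on $\mathcal F_{i-1}$, a fresh i.i.d.\ family and $r_{i-1}\in\tA_{i-1}$. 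A Wald‑type estimate exactly in the spirit of the proof of \Cref{prop:CusterSizeMomentBound} (see also \Cref{lem:RandomSumMoment}(i)) then yields $\E(\Delta r_1)\leq\E(O)\,\E(\Delta\rho_1)$, and the hypothesis $\E(I_0^{2+\epsilon})<\infty$ makes $\E(O)<\infty$ by \Cref{lem:rumour-block}(iii)(a). Thus $\mu$ is a finite, deterministic constant.

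The lower bound $\mu\geq1$ is immediate: since $p_1=\P(I_0\in[0,1))=0$, every radius satisfies $I_z\geq1$ a.s., so the rumour pushes the right boundary at least one site further per step; an easy induction gives $r_n\geq n$ for all $n$, whence $r_n/n\geq1$ and $\mu\geq1$.

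The main obstacle is the renewal construction of \cref{subsec:RenewalConstruction} itself. The delicate feature is that an atypically large radius at a far‑left vertex can reach across the whole cluster and perturb the right boundary, so that a valid renewal time must additionally certify that this can never recur; showing that such times exist, and that the inter‑renewal gaps have finite mean (and, under an exponential tail assumption on $I_0$, exponentially decaying tails via \Cref{lem:rumour-block}(iii)(c), which is what later underpins \Cref{thm:CLT}), is where the moment hypothesis is genuinely used. The remaining ingredients---Wald's identity, the strong law, and the interpolation from the subsequence $(\rho_k)$ to all $n$---are routine.
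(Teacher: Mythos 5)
Your proposal is correct and takes essentially the same route as the paper: the paper's renewal sequence is $\tau_0=\sigma$ (a single random time after which the left half-line can never again reach the positive side) followed by $\tau_j=\inf\{n>\tau_{j-1}:\#\tA^+_n=1\}$, the inter-renewal increments are shown to be i.i.d.\ with geometrically dominated time gaps and spatial gaps of finite mean via overshoot domination and a Wald-type bound, and the limit is obtained by the strong law plus exactly the sandwich interpolation you describe. The only cosmetic difference is that the paper certifies non-interference from the left once and for all at $\sigma$ rather than re-certifying it at every renewal.
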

	
	\update{To prove \Cref{thm:rightmost-speed}  in the following section
		we introduce a renewal construction for the rumour propagation process.}

	\subsection{A renewal process}
	\label{subsec:RenewalConstruction}
	
	\update{In this section, we describe a renewal construction for the rumour propagation process. In \cite{GGJR2014} the authors observed a renewal process based on the firework process on $\N$. For rumour propagation on $\Z$ our renewal process
		construction is along different lines. The main challenge here lies in the fact that the rumour
		can spread on the positive side as well as on the negative side of the integer line $\Z$ here, making it more complicated than the firework process.}
	
	We define a random time $\sigma$ as 
	\begin{align}
		\label{def:Sigma}
		\sigma=\inf \{ n \geqslant 1 : O^{(-\infty, - n ],+} \leq  n \}.
	\end{align}
	Firstly, from the model description, it follows that $l_n \leq - n$ for all $n \geq 1$. 
	\update{Next corollary shows that the r.v. $\sigma$ is finite a.s.
		
		\begin{corollary}
			The r.v. $\sigma$ defined as in (\ref{def:Sigma}) is finite a.s.
		\end{corollary}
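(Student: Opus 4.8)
The plan is to rewrite the event defining $\sigma$ as a statement about the individual radii $\{I_i : i \in \Z^-\}$ and then apply the first Borel--Cantelli lemma. By the definition of the right overshoot r.v., for the half-line $\A = (-\infty,-n]\cap\Z$ we have $\sup(\A) = -n$, so
\begin{align*}
O^{(-\infty,-n],+} = \max\{ (i+I_i) - (-n) : i \leq -n \} = n + \sup\{ i + I_i : i \leq -n \};
\end{align*}
this supremum is finite a.s.\ because it is dominated by the overshoot r.v.\ $O$ of \eqref{def:J_Geometric_Tail}, which is finite a.s.\ under the $(1+\epsilon)$-moment assumption. Hence the event $\{ O^{(-\infty,-n],+} \leq n \}$ equals $\{ i + I_i \leq 0 \text{ for all } i \leq -n \}$, equivalently $\{ I_{-j} \leq j \text{ for all } j \geq n \}$. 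Consequently $\sigma < \infty$ if and only if $I_{-j} \leq j$ holds for all $j \geq n$ for some $n$, i.e.\ if and only if only finitely many indices $j \geq 1$ satisfy $I_{-j} > j$.

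The second step is to show that a.s.\ only finitely many such exceedances occur. By Markov's inequality, $\P(I_{-j} > j) = \P(I_0^{1+\epsilon} > j^{1+\epsilon}) \leq \E(I_0^{1+\epsilon})\, j^{-(1+\epsilon)}$, and therefore $\sum_{j\geq 1}\P(I_{-j} > j) \leq \E(I_0^{1+\epsilon})\sum_{j \geq 1} j^{-(1+\epsilon)} < \infty$. By Borel--Cantelli, $\P(I_{-j} > j \text{ i.o.}) = 0$, so $N := \sup\{ j \geq 1 : I_{-j} > j \}$ is finite a.s.; then $\sigma \leq N + 1 < \infty$ a.s., which is the claim.

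There is essentially no obstacle here: this is the same Borel--Cantelli computation already used (just before \Cref{lem:rumour-block}) to show that the overshoot r.v.\ $O$ is finite a.s. The only point that needs a little care is the bookkeeping identity that turns the overshoot inequality $O^{(-\infty,-n],+} \leq n$ into the coverage-type condition ``$I_{-j} \leq j$ for all $j \geq n$''; once that is in hand, summability of $\sum_j j^{-(1+\epsilon)}$ finishes the argument.
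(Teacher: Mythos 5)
Your proof is correct and follows essentially the same route as the paper: both arguments reduce $\{\sigma = +\infty\}$ to the occurrence of infinitely many exceedances of the form $I_{-j} > cj$ along the negative half-line and then apply Markov's inequality together with the first Borel--Cantelli lemma under the $(1+\epsilon)$-moment assumption. Your version is in fact slightly tidier, since you identify $\{O^{(-\infty,-n],+} \leq n\}$ exactly with $\{I_{-j} \leq j \text{ for all } j \geq n\}$, whereas the paper only asserts an event inclusion (with threshold $2n$).
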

		\begin{proof}
			For $n \geq 1$ we consider the event $A_n := \{ I_{-n} \geq 2n \text{ for all }u \in \tA^-_m \} $ and observe the following event inclusion relation 
			$$
			\{ \sigma = + \infty\} \subset \limsup_{n \to \infty} A_n . 
			$$
			Under the assumption that $\E(I_0^{1 +\epsilon})< \infty$ for some $\epsilon > 0$, using Markov inequality we obtain 
			$$
			\sum_{n = 1}^\infty \P(A_n) = \sum_{n = 1}^\infty \P(I_0 \geq 2n) \leq  \E(I_0^{1 +\epsilon}) \sum_{n = 1}^\infty \frac{1}{(2n)^{1 + \epsilon}} < \infty.
			$$
			Therefore, the proof follows from  Borel Cantelli lemma.    
	\end{proof}}
	
	After the $\sigma$-th step, active vertices on the negative side, 
	i.e., vertices in the set $\tA^-_n$ for any $n > \sigma$, cannot 
	spread the rumour to inactive vertices on the positive side and vertices in the set 
	$\tA^+_n$ only can spread the rumour to the positive side.
	We observe that the distribution of $\sigma$ depends only on radii of influences r.v.'s
	on the negative part of the integer line and hence,  for any $z \in \N$
	the r.v.\ $I_z$ is independent of $\sigma$.

	We now define the following sequence of random steps: 
	\begin{align}
		\label{def:TauStep}
		& \tau_0 = \sigma \text{ and for }j\geqslant1 \nonumber \\
		& \tau_j = \inf \{ n > \tau_{j-1} : \# \tA^+_n  = 1 \}= \update{\inf \{ n > \tau_{j-1} :  r_n - r_{n - 1}  = 1 \}}.
	\end{align}
	We need to show that for any $j \geqslant1$, the r.v.\ $\tau_j$ is a.s.\ finite. 
	This is done in part (i) of Proposition \ref{prop:Tau_Tail}.
	We recall that the notation $r_n$ denotes the location of the rightmost 
	vertex of the rumour cluster at time $n$. 
	\begin{prop}
		\label{prop:Tau_Tail}
		\update{   \begin{itemize}
				\item[(i)]  There exist $C_0, C_1 > 0$ such that for all $n \geq1$ we have
				$$
				P(\tau_{j+1}-\tau_j > n \mid \tau_1, \cdots , \tau_{j-1} ) 
				\leqslant C_0 e^{(-C_1 n)} ; 
				$$
				\item[(ii)] Under the assumption (\ref{eq:AssumptionSecondMoment})  the increment r.v. $( r_{\tau_{j+1}}- r_{\tau_j} ) $ has finite expectation.
		\end{itemize} }   
	\end{prop}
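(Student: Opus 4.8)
The plan is to establish part (i) first, since part (ii) will follow from it by combining the exponential tail of the time gap $\tau_{j+1}-\tau_j$ with the overshoot moment bound of Lemma~\ref{lem:rumour-block}(iii)(a) and the random-sum estimate of Corollary~\ref{lem:RandomSumMoment}. For part (i), the key structural fact is that once the renewal time $\sigma=\tau_0$ has passed, the rumour cluster on the positive side evolves autonomously: vertices in $\tA^-_n$ for $n>\sigma$ can never reach $\Z^+$, so the increments $r_n-r_{n-1}$ depend only on the influence r.v.'s attached to vertices discovered on the positive side, and these are independent of $\{\tau_1,\dots,\tau_{j-1}\}$ in the conditioning. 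I would make this precise by setting up a filtration analogous to the $\{\mathcal F_n\}$ used in the proof of \Cref{thm:phase}, and observing that, conditionally on the past, at each time step the event $\{r_{n+1}-r_n=1\}$ is exactly the event that the right overshoot r.v.\ $O^{\tA^+_n,+}$ equals $0$. By the stochastic-domination estimate \eqref{eq:StochDom_Overshoot}, $\P(O^{\tA^+_n,+}=0\mid\mathcal F_n)\geq \P(O=0)=:p'>0$ uniformly, where by Lemma~\ref{lem:rumour-block}(i) (or rather its $p_1=0$ analogue — I should note here that when $p_1=0$ one instead uses that $\P(O=0)$ is still strictly positive, which needs a brief argument, or alternatively one replaces ``overshoot $=0$'' by ``overshoot $\leq$ some fixed constant'' and works with a slightly coarser renewal).

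Granting the uniform lower bound, the argument is a standard geometric-trials bound: for any $n\geq 1$,
\[
\P(\tau_{j+1}-\tau_j>n\mid \tau_1,\dots,\tau_{j-1})\leq (1-p')^{n},
\]
since at each of the $n$ consecutive steps after $\tau_j$ there is probability at least $p'$ (conditionally on everything so far) that the right increment is exactly $1$, which would realize the next renewal. Taking $C_0=1/(1-p')$ and $C_1=-\log(1-p')>0$ gives the claimed bound $C_0 e^{-C_1 n}$. The one subtlety to handle carefully is the conditioning: I must verify that the sigma-field generated by $\tau_1,\dots,\tau_{j-1}$ is contained in the sigma-field generated by the influence r.v.'s of vertices activated up to time $\tau_j$, so that the Markov-type step estimate applies; this is where the ``autonomy of the positive side after $\sigma$'' observation does the real work, and I expect it to be the main obstacle — not because it is deep, but because it requires a clean measurability bookkeeping of which r.v.'s the renewal times depend on.

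For part (ii), write $r_{\tau_{j+1}}-r_{\tau_j}=\sum_{n=\tau_j+1}^{\tau_{j+1}}(r_n-r_{n-1})$. Each increment $r_n-r_{n-1}$ is, conditionally on the past, stochastically dominated by $1+O$ with $O$ as in \eqref{def:J_Geometric_Tail}, and under assumption~\eqref{eq:AssumptionSecondMoment} we have $\E(O)<\infty$ by Lemma~\ref{lem:rumour-block}(iii)(a). The number of summands $\tau_{j+1}-\tau_j$ has an exponential tail by part (i), hence finite expectation. I would then invoke Corollary~\ref{lem:RandomSumMoment}(i) (Wald-type bound for a random sum of i.i.d.-dominated terms with a finite-mean, appropriately independent stopping index) to conclude $\E(r_{\tau_{j+1}}-r_{\tau_j})<\infty$. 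A minor point worth a sentence: the increments are not literally i.i.d., only conditionally stochastically dominated by $1+O$, so one should phrase the domination as: there is a coupling in which $r_{\tau_{j+1}}-r_{\tau_j}\leq \sum_{m=1}^{\tau_{j+1}-\tau_j}(1+O_m)$ for an i.i.d.\ family $\{O_m\}$ independent of the number of terms, after which Corollary~\ref{lem:RandomSumMoment} applies directly.
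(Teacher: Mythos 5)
Your overall architecture is the same as the paper's: for (i), a uniform positive lower bound, conditional on the past, for the probability that the next step is a renewal step, followed by a geometric-trials bound; for (ii), domination of $r_{\tau_{j+1}}-r_{\tau_j}$ by a random sum of i.i.d.\ copies of the overshoot r.v.\ $O$ with a finite-mean number of terms, concluded via Corollary~\ref{lem:RandomSumMoment}. Part (ii) of your argument is essentially identical to the paper's (the paper writes $r_{\tau_{j+1}}-r_{\tau_j}=\sum_{m=\tau_j+1}^{\tau_{j+1}}\#\tA^+_m$ and dominates by $\sum_{i=1}^{\overline\tau}O_i$ with $\overline\tau=\inf\{i:O_i=1\}$), and your remark about phrasing the domination as a coupling is a reasonable way to make that step precise.

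There is, however, a concrete error in part (i). The renewal event $\{\#\tA^+_{n+1}=1\}=\{r_{n+1}-r_n=1\}$ is the event that the right overshoot $O^{\tA^+_n,+}$ equals $1$, not $0$: since $\sup(\tA^+_n)=r_n$, one has $r_{n+1}=r_n+O^{\tA^+_n,+}$, and overshoot $0$ would mean the positive-side front dies. Moreover, Section~4 works under the standing assumption $p_1=\P(I_z\in[0,1))=0$, so $I_z\geqslant 1$ a.s., hence $O\geqslant 1$ a.s.\ and $\P(O=0)=0$ exactly; your lower bound $\P(O^{\tA^+_n,+}=0\mid\mathcal F_n)\geqslant\P(O=0)$ is therefore vacuous, and your first suggested patch (``$\P(O=0)$ is still strictly positive'' when $p_1=0$) is false. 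Your second patch --- coarsening the renewal to ``overshoot $\leqslant$ a fixed constant'' --- would change the definition of $\tau_j$ in \eqref{def:TauStep} and hence the statement being proved. The correct repair, and what the paper does, is to bound the conditional probability below by $\P(O=1)$, which is strictly positive by Lemma~\ref{lem:rumour-block}(ii) (under the implicit assumption $\P(I_0\leqslant 1)>0$, i.e.\ $\P(I_0=1)>0$ when $p_1=0$); this requires extending the stochastic-domination statement \eqref{eq:StochDom_Overshoot} from the event $\{O^{\tA^+_n,+}=0\}$ to $\{O^{\tA^+_n,+}=1\}$, which follows by the same comparison with the half-line configuration. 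With that substitution your geometric-trials argument goes through unchanged.
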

	\begin{proof}
		Note that for the rumour propagation process starting from the origin, 
		the process $\{(\tA^+_{n}, \tA^-_{n}) : n \geq 1\}$ is Markov. 
		\update{ By definition of the random step $\sigma$, for any $n > \sigma$ 
			vertices in $\tA^-_{n}$ cannot spread the rumour to the positive side. Therefore, for any $n > \sigma = \tau_0$ we have
			\begin{align}
				\label{eq:Dominate_ActiveSet}
				\P \bigl ( \#\tA^+_{n+1}  = 1 \mid (\tA^-_{n}, \tA^+_{n}), (\sigma < n) \bigr ) & \geqslant  \P( O = 1) > 0, 
			\end{align}
			where $O$ is defined as in (\ref{def:J_Geometric_Tail}).
			
			This implies that after the $\sigma$-th step at every step, there is 
			a fixed positive lower bound for the probability of occurrence of a $\tau$ step.   
			Hence, \ref{eq:Dominate_ActiveSet} proves part
			(i) of \Cref{prop:Tau_Tail}.
			
			Regarding (ii), we observe that 
			\begin{equation}
				\label{eq:RightmostRandomSum}   
				r_{\tau_{j+1}}- r_{\tau_j} = \sum_{m = \tau_j + 1}^{\tau_{j+1}} \#\tA^+_{m}.
			\end{equation}
			Consider a collection of i.i.d. copies of the overshoot r.v. $O$ given as $\{ O_i : i \geq 1\}$ and let $\overline{\tau} := \inf \{ i \geq 1 : O_i = 1\}$. It follows that the increment r.v. $r_{\tau_{j+1}}- r_{\tau_j}$ is stochastically dominated by the random sum $\sum_{i=1}^{\overline{\tau}} O_i$. Finally, Corollary \ref{lem:RandomSumMoment} ensures under the assumption (\ref{eq:AssumptionSecondMoment}), i.e., $\E(I^{(2+\epsilon)}_0) < \infty$ for some $\epsilon > 0$, the random sum $\sum_{i=1}^{\overline{\tau}} O_i$ has finite expectation and therefore, the increment r.v. $( r_{\tau_{j+1}}- r_{\tau_j} ) $ has finite mean too. } 
	\end{proof}

	The next result shows that the above sequence of random steps gives a sequence of renewal steps with i.i.d. increments 
	for the rumour propagation process. 
	\begin{prop}
		\label{prop:Tau_IID}
		$\{ (\tau_{j+1} - \tau_j, r_{\tau_{j+1}} - r_{\tau_{j}}) : j \geqslant1\}$ 
		gives a collection of i.i.d. random vectors taking values in $\N \times \N$.
	\end{prop}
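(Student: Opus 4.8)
The plan is to exploit the fact that each random time $\tau_j$, $j\geqslant 1$, is a regeneration epoch for the rumour cluster: by construction $\tA^+_{\tau_j}=\{r_{\tau_j}\}$ is a single active vertex on the positive side, and from that time on the rightmost vertex is driven only by the radii of influence at sites $\geqslant r_{\tau_j}$, none of which have been consulted before time $\tau_j$. First I would record the autonomy of the positive-side dynamics after $\sigma$: since $\tau_j\geqslant\tau_1>\tau_0=\sigma$, the defining property of $\sigma$ already guarantees that for every $n>\sigma$ no vertex in $\tA^-_n$ can spread the rumour to a site in $\Z^+\setminus\{0\}$, so for all $n\geqslant\tau_j$ the set $\tA^+_n$, and hence $r_n$, is obtained by letting \emph{only} the positive active vertices spread rightward. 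Because the positive part of $\A_n$ is always the interval $[0,r_n]\cap\Z$ and $p_1=0$ forces $r_n$ to be strictly increasing (so $\tA^+_n\neq\emptyset$ for all $n$), an immediate induction on $k$ shows that the array $\bigl(\tA^+_{\tau_j+k}-r_{\tau_j}\bigr)_{k\geqslant 0}$, and in particular $\bigl(r_{\tau_j+k}-r_{\tau_j}\bigr)_{k\geqslant 0}$, is a fixed deterministic functional of the sequence $(I_{r_{\tau_j}+z})_{z\geqslant 0}$ — the \emph{same} functional for every $j$, namely the evolution of a one-sided rightward rumour process started from a single active vertex at the origin. Since $\tau_{j+1}-\tau_j=\inf\{k\geqslant 1:r_{\tau_j+k}-r_{\tau_j+k-1}=1\}$, which is a.s.\ finite by part (i) of \Cref{prop:Tau_Tail}, this yields
\[
V_j:=(\tau_{j+1}-\tau_j,\; r_{\tau_{j+1}}-r_{\tau_j})=G\bigl((I_{r_{\tau_j}+z})_{z\geqslant 0}\bigr)
\]
for one fixed measurable map $G$ from the space of non-negative integer sequences to $\N\times\N$, the same for all $j\geqslant 1$.

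Next I would establish the independence. Let $\mathcal H_k:=\sigma(I_z:z\leqslant k)$, and let $\mathcal G_j$ be the $\sigma$-field generated by $\sigma$ together with the stopped process $(\A_{n\wedge\tau_j})_{n\geqslant 0}$; using that $r_n$ is strictly increasing under $p_1=0$ one checks that $\tau_0=\sigma,\tau_1,\dots,\tau_j$, the positions $r_{\tau_0},\dots,r_{\tau_j}$, and the earlier increment vectors $V_1,\dots,V_{j-1}$ are all $\mathcal G_j$-measurable. The crucial point is a trace inclusion: on the event $\{r_{\tau_j}=m\}$ one has $r_{\tau_j-1}=m-1$, so running the rumour recursion up to time $\tau_j$ only ever reads radii $I_z$ with $z\leqslant m-1$ (the positive active sites stay in $[0,m-1]$ through step $\tau_j-1$; the site $m$ is added to the cluster at step $\tau_j$ but $I_m$ is not yet used; and $\sigma$ as well as the negative side involve only radii at sites $\leqslant -1$). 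Decomposing $\{r_{\tau_j}=m\}$ over the countably many possible values of $\tau_j$ and of $\A_{\tau_j-1}$ makes this precise and shows that $\{r_{\tau_j}=m\}\in\mathcal H_{m-1}$ and that $\mathcal G_j$ restricted to $\{r_{\tau_j}=m\}$ is contained in $\mathcal H_{m-1}$. On the other hand $G\bigl((I_{m+z})_{z\geqslant 0}\bigr)$ is $\sigma(I_z:z\geqslant m)$-measurable, hence independent of $\mathcal H_{m-1}$, with a law $\nu$ depending neither on $m$ nor on $j$. Therefore, for every bounded $\mathcal G_j$-measurable $Y$ and every bounded measurable $h$,
\[
\E\bigl[Y\,h(V_j)\bigr]=\sum_{m}\E\bigl[Y\mathbf{1}_{\{r_{\tau_j}=m\}}\,h\bigl(G((I_{m+z})_{z\geqslant 0})\bigr)\bigr]=\Bigl(\sum_{m}\E\bigl[Y\mathbf{1}_{\{r_{\tau_j}=m\}}\bigr]\Bigr)\int h\,d\nu=\E[Y]\int h\,d\nu,
\]
so $V_j$ is independent of $\mathcal G_j$ and has law $\nu$, for every $j\geqslant 1$.

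Finally I would assemble the conclusion. Since each $V_j$ is $\mathcal G_{j+1}$-measurable and $\mathcal G_j\subseteq\mathcal G_{j+1}$, the fact that every $V_j$ is independent of $\mathcal G_j$ with the common law $\nu$ gives, by the standard telescoping computation (conditioning on $\mathcal G_k$ one factor at a time), that the whole sequence $\{V_j:j\geqslant 1\}$ is i.i.d.\ with law $\nu$. The values lie in $\N\times\N$: $\tau_{j+1}-\tau_j\geqslant 1$ by definition and is a.s.\ finite by part (i) of \Cref{prop:Tau_Tail}, while $r_{\tau_{j+1}}-r_{\tau_j}\geqslant 1$ because $r$ is non-decreasing and $r_{\tau_{j+1}}=r_{\tau_{j+1}-1}+1>r_{\tau_j}$, and it is a.s.\ finite since $r$ is finite at every finite time; hence $\nu$ is a genuine probability measure on $\N\times\N$.

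The main obstacle is the measurability bookkeeping in the second step: one has to argue carefully that $r_{\tau_j}$ plays the role of a \emph{stopping location} for the spatial filtration $(\mathcal H_k)_{k\in\Z}$, i.e.\ that the entire history of the process up to the regeneration time $\tau_j$ is measurable with respect to the radii at sites $\leqslant r_{\tau_j}-1$ and is therefore independent of the i.i.d.\ radii at sites $\geqslant r_{\tau_j}$ that generate the future increment. Everything else — the autonomy of the positive side after $\sigma$, the functional identity for $V_j$, and the final telescoping — is routine once this decomposition is in place.
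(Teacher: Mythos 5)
Your proposal is correct and takes essentially the same route as the paper's own proof: independence comes from the observation that successive increment vectors are supported on disjoint sets of radius-of-influence random variables (your spatial filtration $\mathcal H_{m-1}$ versus $\sigma(I_z : z\geqslant m)$ argument makes this precise), and the common distribution is identified with that of a one-sided rightward rumour process started from a single active vertex, exactly as in the paper's comparison with $(\gamma, r^+_\gamma)$. The only difference is that you spell out the measurability bookkeeping that the paper leaves implicit.
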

	\begin{proof}
		Independence of increment vectors between successive renewals follows as they 
		are supported on disjoint sets of radius of influence random variables. 
		We present a brief description of the identical distribution of the increment random 
		vector below.
		
		Consider a rumour propagation process only on $\Z^+$ starting with the origin as the 
		only active vertex at time point $0$. Let $\A^+_n$ and $\tA^+_n$ denote 
		the corresponding analog of sets $\A_n$ and $\tA_n$ when the sets $\A_n$ and $\tA_n$ are now restricted to $\Z^+$ only. We consider the evolution of this process till 
		the random time $\gamma$ such that we have $\# \tA^+_\gamma = 1$. Let $r^{+}_\gamma$ denote the position of the rightmost active vertex at time $\gamma$.
		The increment random vectors between successive renewals 
		have the same distribution as $(\gamma, r^{+}_\gamma)$, i.e.,
		for all $j \geqslant 1$ we have
		$$(\tau_{j+1} - \tau_j, r_{\tau_{j+1}} - r_{\tau_{j}})
		\stackrel{{\cal L}}{=} (\gamma, r^{+}_\gamma),$$
		\update{where the notation $\stackrel{{\cal L}}{=}$ denotes equality in distribution.} This completes the proof.
		
	\end{proof}

	\subsection{Proof of \Cref{thm:rightmost-speed}}
	
	In this section we complete
	the proof of Theorem \ref{thm:rightmost-speed}. \update{In the next corollary, using our renewal construction, we prove Theorem \ref{thm:rightmost-speed} and provide a precise 
		understanding of the limit too.} 
	\begin{corollary}
		\label{cor:Understanding_Limit}
		We have 
		\begin{align}\label{eq:speed_mu}
			\lim_{n \to \infty} r_n/ n = \mu =\frac{\E(r_{\tau_2} - r_{\tau_1})}{\E({\tau_2} - {\tau_1})}
			\text{a.s.}
		\end{align}
	\end{corollary}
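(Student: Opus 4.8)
The plan is to derive the statement from a standard renewal--reward strong law, using the i.i.d.\ structure of \Cref{prop:Tau_IID} together with the integrability estimates of \Cref{prop:Tau_Tail}. For $n\geq 1$ let $K_n := \sup\{\, j\geq 1 : \tau_j \leq n\,\}$, with the convention $K_n := 0$ if $\tau_1 > n$; this counts the renewal steps completed by time $n$. Since $\A_{n-1}\subseteq\A_n$, the sequence $r_n$ is non-decreasing in $n$, so on $\{K_n\geq 1\}$ we have the deterministic sandwich
\begin{align*}
r_{\tau_{K_n}} \;\leq\; r_n \;\leq\; r_{\tau_{K_n+1}}, \qquad \tau_{K_n} \;\leq\; n \;<\; \tau_{K_n+1}.
\end{align*}

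First I would record that every $\tau_j$ is a.s.\ finite, so that $K_n\uparrow\infty$ a.s.\ as $n\to\infty$. Indeed $\sigma=\tau_0$ is a.s.\ finite (established above), and by \eqref{eq:Dominate_ActiveSet} each subsequent increment $\tau_{j+1}-\tau_j$ is stochastically dominated by a geometric random variable with success probability $\P(O=1)>0$, hence a.s.\ finite; intersecting over the countably many $j$ shows $\tau_j<\infty$ for all $j$ simultaneously, a.s. In particular $\{K_n\geq 1\}$ holds for all large $n$, a.s.

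Next I would invoke the strong law of large numbers for the i.i.d.\ family $\{(\tau_{j+1}-\tau_j,\ r_{\tau_{j+1}}-r_{\tau_j}):j\geq 1\}$ furnished by \Cref{prop:Tau_IID}, whose first moments are finite by \Cref{prop:Tau_Tail} (exponential tails, hence finite mean, for the time increment; finite mean for the space increment under \eqref{eq:AssumptionSecondMoment}). Writing $\tau_{m+1}=\tau_1+\sum_{j=1}^m(\tau_{j+1}-\tau_j)$ and likewise for $r_{\tau_{m+1}}$, and noting that the burn-in terms $\tau_1$, $r_{\tau_1}$ are a.s.\ finite (but, because of the initial $\sigma$-step, not part of the i.i.d.\ family) and are washed out after dividing by $m$, the SLLN gives, a.s.,
\begin{align*}
\frac{\tau_m}{m} \longrightarrow \E(\tau_2-\tau_1), \qquad \frac{r_{\tau_m}}{m} \longrightarrow \E(r_{\tau_2}-r_{\tau_1}) \qquad (m\to\infty).
\end{align*}
Since $K_n\to\infty$ a.s., substituting the random indices $m=K_n$ and $m=K_n+1$ yields the corresponding a.s.\ limits for $\tau_{K_n}/K_n$, $r_{\tau_{K_n}}/K_n$ and their $(K_n+1)$-versions.

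Finally I would feed these limits into the sandwich. From $\tau_{K_n}\leq n<\tau_{K_n+1}$ and $r_{\tau_{K_n}}\leq r_n\leq r_{\tau_{K_n+1}}$ we get
\begin{align*}
\frac{r_{\tau_{K_n}}/K_n}{\tau_{K_n+1}/(K_n+1)}\cdot\frac{K_n}{K_n+1} \;\leq\; \frac{r_n}{n} \;\leq\; \frac{r_{\tau_{K_n+1}}/(K_n+1)}{\tau_{K_n}/K_n}\cdot\frac{K_n+1}{K_n},
\end{align*}
and, letting $n\to\infty$, both outer expressions converge a.s.\ to $\E(r_{\tau_2}-r_{\tau_1})/\E(\tau_2-\tau_1)$ since $K_n/(K_n+1)\to 1$. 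Hence $r_n/n\to\mu:=\E(r_{\tau_2}-r_{\tau_1})/\E(\tau_2-\tau_1)$ a.s.; the bound $\mu\geq 1$ is immediate from $r_n\geq n$ a.s., which holds because $p_1=0$ forces $[-n,n]\subseteq[l_n,r_n]$. The whole argument is essentially bookkeeping once \Cref{prop:Tau_Tail} and \Cref{prop:Tau_IID} are in hand; the one place that needs a little care is the a.s.\ finiteness of every $\tau_j$ (so that $K_n\to\infty$), together with the observation that the non-i.i.d.\ burn-in contributions $\tau_1$, $r_{\tau_1}$ are negligible after normalization.
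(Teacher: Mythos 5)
Your argument is correct and follows essentially the same route as the paper's proof: both rest on the renewal--reward sandwich $r_{\tau_{k}} \leq r_n \leq r_{\tau_{k+1}}$ with $\tau_k \leq n < \tau_{k+1}$, combined with the strong law of large numbers applied to the i.i.d.\ increments supplied by \Cref{prop:Tau_IID} and the integrability from \Cref{prop:Tau_Tail}. Your version is, if anything, slightly more careful than the paper's in explicitly verifying that $K_n \to \infty$ a.s.\ and that the non-i.i.d.\ burn-in terms $\tau_1$, $r_{\tau_1}$ are washed out by the normalization.
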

	\begin{proof}
		From \Cref{prop:Tau_IID}, we can write the right hand side of \cref{eq:speed_mu} as: 
		$$\lim_{k \to \infty} \dfrac{\frac{1}{k} \left(\sum_{j=1}^k r_{\tau_{j+1}}-r_{\tau_j}\right)}{\frac{1}{k} \left(\sum_{j=1}^k \tau_{j+1}-\tau_{j}\right)}.$$
		This reduces to give, 
		\begin{align}\label{eq:exp_ratio_r_tau}
			\frac{\E(r_{\tau_2} - r_{\tau_1})}{\E({\tau_2} - {\tau_1})} = \lim_{k \to \infty} \frac{r_{\tau_{k+1}}-r_{\tau_1}}{\tau_{k+1}-\tau_{1}}.\end{align}
		Let $n$ be large enough such that $\tau_1 < n$. Let $k(n) \in \N$ be such that $\tau_{k(n)} \leqslant n < \tau_{k(n)+1}$. From now on we shall just use $k$ to denote $k(n)$. Then, 
		$r_{\tau_k} - r_{\tau_1} \leqslant r_n - r_{\tau_1} < r_{\tau_{k+1}} - r_{\tau_1}.$ Since we have 
		\begin{align*}
			\lim_{k \to \infty} \frac{r_{\tau_{k+1}}-r_{\tau_1}}{\tau_{k+1}-\tau_{1}} = \lim_{k \to \infty} \frac{r_{\tau_{k}}-r_{\tau_1}}{\tau_{k}-\tau_{1}}, 
		\end{align*}
		hence, 
		\begin{align}\label{eq:ratio_r_tau_n}
			\lim_{n \to \infty} \frac{r_{n}-r_{\tau_1}}{n-\tau_{1}} = \lim_{k \to \infty} \frac{r_{\tau_{k+1}}-r_{\tau_1}}{\tau_{k+1}-\tau_{1}}.
		\end{align}
		So, from \cref{eq:exp_ratio_r_tau} and \cref{eq:ratio_r_tau_n} we get, 
		\begin{align*}
			\frac{\E(r_{\tau_2} - r_{\tau_1})}{\E({\tau_2} - {\tau_1})} = \lim_{n \to \infty} \frac{r_{n}-r_{\tau_1}}{n-\tau_{1}} = \lim_{n \to \infty} \frac{r_{n}}{n}. 
		\end{align*}
		
	\end{proof} 
	\subsection{A central limit theorem for the position of rightmost vertex}
	
	\update{In this section, we prove a central limit theorem for the position of the rightmost vertex in a super-critical rumour percolation cluster, motivated by similar work by \cite{kuczek1989central} on the right edge of super-critical oriented percolation. For this part of the result we assume that $\E(I_0)^{4+\epsilon} < \infty$ for some $\epsilon > 0$. Under this assumption by item (iii) of \Cref{lem:rumour-block} we have that the overshoot r.v. $O$ has finite second order moment. Further, \Cref{lem:RandomSumMoment} and the same argument as in part (ii) of Proposition \ref{prop:Tau_Tail} together give us that the increment r.v. $(r_{\tau_{j+1}} - r_{\tau_{j}})$ has finite second order moment as well.        
	}

	The main result of the section is the following: 
	\begin{theorem}
		\label{thm:CLT}
		\update{Under the assumption that $\E(I_0)^{4+\epsilon} < \infty$ for some $\epsilon >0$ and $p_1 = 0$, we have 
			\begin{equation}\label{eq:clt_rn}
				\frac{r_n - n \mu}{\sqrt{n}} \overset{\mathcal{L}}{\longrightarrow} N(0, \psi^2),
			\end{equation} 
			where the notation $\overset{\mathcal{L}}{\longrightarrow}$ denotes convergence in distribution.}
	\end{theorem}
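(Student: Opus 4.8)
The plan is to exploit the renewal structure of \Cref{subsec:RenewalConstruction} and reduce the statement to a classical central limit theorem for a random number of i.i.d.\ summands. Recall from \Cref{prop:Tau_IID} that $(T_j, R_j) := (\tau_{j+1}-\tau_j,\, r_{\tau_{j+1}}-r_{\tau_j})$, $j \geq 1$, are i.i.d.\ $\N\times\N$-valued vectors; by \Cref{prop:Tau_Tail}(i) the $T_j$ have exponentially decaying tails (so all moments finite), and, as recorded in the paragraph introducing \Cref{thm:CLT}, under $\E(I_0^{4+\epsilon})<\infty$ the $R_j$ have finite second moment. Put $\nu := \E(\tau_2-\tau_1) \in [1,\infty)$, recall $\mu = \E(R_1)/\nu$ from \Cref{cor:Understanding_Limit}, and set $Y_j := R_j - \mu T_j$, so that $(Y_j)_{j\geq 1}$ is i.i.d.\ with $\E(Y_1)=0$ and $\sigma^2 := \mathrm{Var}(Y_1) < \infty$. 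For $n$ large enough that $\tau_1 < n$, let $k(n)$ be the a.s.\ finite index with $\tau_{k(n)} \leq n < \tau_{k(n)+1}$, as in the proof of \Cref{cor:Understanding_Limit}. Telescoping $\sum_{j=1}^{k(n)-1} Y_j = (r_{\tau_{k(n)}}-r_{\tau_1}) - \mu(\tau_{k(n)}-\tau_1)$ and inserting $r_{\tau_{k(n)}}$ and $\tau_{k(n)}$ yields the decomposition
\begin{equation}
	\label{eq:clt_decomp}
	r_n - n\mu \;=\; \sum_{j=1}^{k(n)-1} Y_j \;+\; D_n \;+\; \bigl( r_{\tau_1} - \mu \tau_1 \bigr),
	\qquad D_n := \bigl( r_n - r_{\tau_{k(n)}} \bigr) - \mu \bigl( n - \tau_{k(n)} \bigr).
\end{equation}

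Next I would argue that, after division by $\sqrt n$, the last two terms of \eqref{eq:clt_decomp} are negligible. The term $r_{\tau_1}-\mu\tau_1$ is a fixed finite random variable, hence $o(\sqrt n)$ a.s. For $D_n$, monotonicity of $r_\cdot$ and of the renewal times gives $0 \leq r_n - r_{\tau_{k(n)}} \leq R_{k(n)}$ and $0 \leq n - \tau_{k(n)} < T_{k(n)}$, so $|D_n| \leq R_{k(n)} + \mu T_{k(n)}$. Since $\E(T_1^2) < \infty$ and $\E(R_1^2) < \infty$, Borel--Cantelli gives $T_m/\sqrt m \to 0$ and $R_m/\sqrt m \to 0$ a.s.; combined with the renewal strong law $k(n)/n \to 1/\nu$ a.s.\ (from $\tau_k/k \to \nu$ a.s.), this forces $D_n/\sqrt n \to 0$ a.s.

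It then remains to show $\tfrac{1}{\sqrt n}\sum_{j=1}^{k(n)-1} Y_j \overset{\mathcal{L}}{\longrightarrow} N(0, \sigma^2/\nu)$. Because $(Y_j)$ is i.i.d.\ mean zero with finite variance $\sigma^2$ and $k(n)$ is an integer-valued random index with $k(n)/n \to 1/\nu$ a.s.\ for the deterministic constant $1/\nu>0$, the central limit theorem for randomly indexed sums (Anscombe's theorem) gives $\tfrac{1}{\sqrt{k(n)}}\sum_{j=1}^{k(n)} Y_j \overset{\mathcal{L}}{\longrightarrow} N(0,\sigma^2)$. Writing $\tfrac{1}{\sqrt n}\sum_{j=1}^{k(n)-1} Y_j = \sqrt{k(n)/n}\cdot\tfrac{1}{\sqrt{k(n)}}\sum_{j=1}^{k(n)} Y_j - \tfrac{Y_{k(n)}}{\sqrt n}$, and using $\sqrt{k(n)/n}\to 1/\sqrt\nu$ a.s., $Y_{k(n)}/\sqrt n \to 0$ a.s.\ (exactly as for $D_n$), and Slutsky's theorem, the right-hand side converges in law to $N(0,\sigma^2/\nu)$. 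Substituting back into \eqref{eq:clt_decomp} proves \eqref{eq:clt_rn} with $\psi^2 = \sigma^2/\nu = \mathrm{Var}\bigl(r_{\tau_2}-r_{\tau_1}-\mu(\tau_2-\tau_1)\bigr)/\E(\tau_2-\tau_1)$.

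The main obstacle is the random time change: the number $k(n)$ of completed renewals by time $n$ is random, fluctuates on the $\sqrt n$ scale, and is \emph{not} independent of the reward sums $\sum_j Y_j$ (both are functions of the $T_j$), so one cannot simply replace $k(n)$ by $\lfloor n/\nu\rfloor$. This is precisely what Anscombe's theorem is designed to handle; alternatively one could run a Kuczek-style argument comparing $\sum_{j\le k(n)} Y_j$ with $\sum_{j\le \lfloor n/\nu\rfloor} Y_j$, the difference being a sum over $O(\sqrt n)$ mean-zero terms and hence $o_P(\sqrt n)$ by a maximal inequality. The $4+\epsilon$ moment hypothesis enters exactly here: it secures both $\sigma^2<\infty$ (so the CLT for the $Y_j$ applies) and the a.s.\ negligibility of the single leftover increments $R_{k(n)}$ and $T_{k(n)}$; if $\sigma^2=0$ the limit is the point mass at $0$ and the statement is trivial.
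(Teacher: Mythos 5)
Your proof is correct and follows essentially the same route as the paper: both arguments bracket $r_n$ between the consecutive renewal values $r_{\tau_{k(n)}}$ and $r_{\tau_{k(n)+1}}$, show the leftover/overshoot terms are $o(\sqrt{n})$, and conclude by a central limit theorem for the randomly indexed sums of the i.i.d.\ renewal increments from \Cref{prop:Tau_IID}. The only real difference is that where the paper invokes Siegmund's Lemma~2 (as restated in Kuczek's Corollary~1) as a black box for the random-index CLT, you prove that step directly via the centred increments $Y_j = R_j - \mu T_j$ and Anscombe's theorem, which has the added benefit of identifying the limiting variance explicitly as $\psi^2 = \mathrm{Var}\bigl(r_{\tau_2}-r_{\tau_1}-\mu(\tau_2-\tau_1)\bigr)/\E(\tau_2-\tau_1)$.
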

	\begin{proof} Choose $n$ large enough such that $\tau_1 < n$. Let $k(n) \in \N$ be such that $\tau_{k(n)} \leqslant n < \tau_{k(n)+1}$. From now on we shall just use $k$ to denote $k(n)$. Then, 
		$r_{\tau_k} \leqslant r_n < r_{\tau_{k+1}}.$ 
		So we also have, $$\frac{r_{\tau_k} - n \mu}{\sqrt{n}} \leqslant\frac{r_n - n \mu}{\sqrt{n}} < \frac{r_{\tau_{k+1}} - n \mu}{\sqrt{n}}.$$ 
		
		From \Cref{prop:Tau_IID} we know that $\frac{\tau_{j+1} - \tau_j}{\sqrt{n}}$ converges in probability to $0$ as $n \to \infty$, for all $j\geqslant1$, also that $\frac{r_{\tau_{k+1}}-r_{\tau_k}}{\sqrt{n}} \overset{P}{\longrightarrow} 0$.
		Hence, we shall have $\frac{r_{\tau_{k+1}}-r_{n}}{\sqrt{n}} \overset{P}{\longrightarrow} 0$. So, it remains to show that, 
		\begin{equation*}
			\frac{r_{\tau_{k+1}} - n \mu}{\sqrt{n}} \overset{\cal L}{\longrightarrow} N(0, \psi^2). 
		\end{equation*}
		From \cite[Lemma 2]{siegmund1975time} as stated in \cite[Corollary 1]{kuczek1989central}, we know that in our notation 
		\begin{equation*}
			\frac{r_{\tau_{k+1}} - n\mu}{\sqrt{n}} \overset{\cal L}{\longrightarrow} N(0, \psi^2). 
		\end{equation*}
		Here, $\psi$ is a positive real number. On incorporating \Cref{cor:Understanding_Limit}, we have \cref{eq:clt_rn}. 
	\end{proof}

	\section{A rumour model with a reactivation mechanism}\label{sec:reactivation}

	In this section, we will introduce and analyse a  rumour model with a reactivation mechanism \update{where, due to reactivation, a vertex which received the rumour at a past time point, 
		may spread it again in future using an independent radius of influence random variable. The motivation is that, on the event of spreading a gossip over a network, an enthusiastic individual may contribute at multiple time points. As we have mentioned before, this concept of reactivation is influenced by the idea of recurring public opinion and its dissemination, considered for example in \cite{xu2020dynamic}. It is further motivated by the concept of reinfection in case of disease spread models. 
		Below, we describe our model.}
	
	We start with a family of i.i.d.\ radii of influence r.v.'s.
	$\{ I^n_v : v \in \Z,  n \in \Z^+ \}$  and a family of i.i.d.\ Bernoulli r.v.'s
	$\{ B^n_v : v \in \Z,  n \in \Z^+ \}$ with parameter $\update{p_2} \in (0,1)$. 
	A vertex $u \in \Z$, which has newly heard the rumour at time $n$, uses the r.v.\ $ I^n_u$ to spread the rumour. In addition to it, a vertex 
	$u^{\prime} \in \Z$ which heard it at some time before $ n$, becomes active again and spreads the rumour using $ I^n_{u^\prime}$ at time $n$ if and only if $B^n_{u^\prime} = 1$. 
	In other words, for a vertex $u \in \Z$, which heard the rumour at a previous time point, 
	the Bernoulli r.v. $B^n_u $ indicates whether $n$ is an activation time or not.   
	\update{For this section, we would assume that the radius of influence r.v.  $I^0_0$ has exponentially decaying tail. Our results, however, would still hold true without assuming this, but with assumptions on higher moments only.}
	We are now ready to define this rumour propagation with the reactivation model formally.  
	
	For any interval $I$ with $I \cap \Z \neq \emptyset$, we denote the rumour percolation process with reactivation starting from an 
	initial active set $I \cap \Z$ as $\{(\A^{\text{R},I}_n, \tA^{\text{R},I}_n) : n \geqslant 0\}$
	where $\A^{\text{R},I}_n$ denotes the set of vertices that have heard the rumour by time $n$ and 
	$\tA^{\text{R},I}_n$ denotes the set of vertices that are ready to spread the rumour at time $n$. 
	The random set $\tA^{\text{R},I}_n$ definitely contains vertices that 
	have newly heard the rumour at time $n$. More importantly, it may contain  an additional
	set of reactivated vertices that heard the rumour in the past.
	We define this model inductively as follows:
	\begin{align*}
		\A^{\text{R},I}_0 & := \tA^{\text{R},I}_0 = I \cap \Z \text{ and }\\ 
		\tA^{\text{R},I}_1 & := \{ u \in \Z \setminus \A^{\text{R},I}_0 : |u - w| \leqslant I^0_w \text{ for some }w\in 
		\tA^{\text{R},I}_0\} \cup 
		\{ u \in \A^{\text{R},I}_0 : B^{1}_u = 1 \}.\\
		\A^{\text{R},I}_1 & := \A^{\text{R},I}_0 \cup \tA^{\text{R},I}_1. 
	\end{align*}
	The set $\A^{\text{R},I}_1$ represents the rumour cluster  at time $1$, i.e., the set of vertices 
	that have heard the rumour by time $1$. We recall that for the conventional rumour percolation model, there is no reactivation, and 
	we have $\tA_m \cap  \tA_n = \emptyset$ for all $m, n \in \Z^+$
	with $m \neq n$. This no longer holds for the reactivation model. In fact, a vertex $ u \in \tA^{\text{R},I}_0$ 
	belongs to $\tA^{\text{R},I}_1$ as well iff we have $B^1_u = 1$. 
	The set of vertices ready to spread the rumour at time $2$ is given by 
	\begin{align*}
		\tA^{\text{R},I}_2 & := \{ u \in \Z \setminus \A^{\text{R},I}_1 : |u - w| \leqslant I^1_w \text{ for some }w \in  
		\tA^{\text{R},I}_1\} \cup \{ u \in \A^{\text{R},I}_1 : B^2_u = 1 \}.\\
		\A^{\text{R},I}_2 & := \A^{\text{R},I}_1 \cup \tA^{\text{R},I}_2. 
	\end{align*}
	The set $\A^{\text{R},I}_2$ represents the rumour cluster  at time $2$, i.e., the set of vertices that have heard the rumour by time $2$. More generally, for $n \geqslant 2$ given $(\A^{\text{R},I}_{n-1}, \tA^{\text{R},I}_{n-1} )$, we define the 
	sets $\tA^{\text{R},I}_{n} $ and $\A^{\text{R},I}_{n} $ respectively as:
	\begin{align*}
		\tA^{\text{R},I}_{n}  & := \{ u \in \Z \setminus \A^{\text{R},I}_{n-1} : |u - w| \leqslant I^{n-1}_w \text{ for some }w \in  
		\tA^{\text{R},I}_{n-1}\} \cup \{ u \in \A^{\text{R},I}_{n-1} : B^{n}_u  = 1 \}.\\
		\A^{\text{R},I}_{n}  & := \A^{\text{R},I}_{n-1} \cup \tA^{\text{R},I}_{n}.
	\end{align*}
	$\A^{\text{R}}_{n}$ represents the random set of vertices that have heard the rumour by time 
	$n$ and therefore, it represents the rumour cluster at time $n$. 
	It follows that for any vertex which heard the rumour in the past, the time gaps 
	between successive reactivations follow i.i.d. geometric distributions with parameter \update{$p_2$},
	which represents the laziness parameter of the reactivation clock.
	
	With a slight abuse of notations, for an interval $I$ of the form $I\cap \Z = \{u\}$, we denote the corresponding rumour propagation process simply as $\{ (\A^{\text{R},u}_n,\tA^{\text{R},u}_n ) : n \geq 0\}$.
	In particular,  for simplicity of notation the process  $\{ (\A^{\text{R},0}_n,\tA^{\text{R},0}_n ) : n \geq 0\}$ is simply denoted as 
	$\{ (\A^{\text{R}}_n,\tA^{\text{R}}_n ) : n \geq 0\}$. 
	It is not difficult to see that, similar to the earlier model, 
	for any $u \in \Z$ and $n \geqslant 1$, the random set $\A^{\text{R},u}_{n}$ is a.s. finite and of 
	the form $[a, b]\cap \Z$, for some $-\infty < a < b <\infty$.
	
	The rumour percolation event for this model is defined  
	as in \Cref{def:RumourPercolation}. We observe that, unlike the earlier model, 
	the event 
	$$
	\A^{\text{R}}_n = \A^{\text{R}}_{n + 1}\text{ for some }n \geqslant 1 ,
	$$
	no longer implies non-occurrence of percolation. In fact, even if the radius r.v. $I^0_0$ takes the value zero with positive probability, it is not hard to see that for all $p_2>0$ 
	this reactivation model percolates. \update{In this section we will assume that the i.i.d. family of radius of influence r.v.'s is such that the following two conditions are satisfied:
		\begin{itemize}
			\item[(a)] There exist $C_0, C_1 > 0$ such that 
			$\P(I^0_0 > n) \leq C_0 e^{(-C_1 n)}$ for all $n \geq 1$ and 
			\item[(b)] $\P(I^0_0 = 0) = p_1 > 0$.
	\end{itemize}}

	Let $l^{\text{R}}_n$ and $r^{\text{R}}_n$ denote 
	the leftmost and rightmost vertex in the set $\A^{\text{R}}_n$ respectively. \update{Under the above mentioned assumptions on the i.i.d. family $\{I^n_z : n \in \Z^+, z \in \Z \}$  we show that
		almost surely the random quantity $r_n^R / n$ has a deterministic limit. We comment that one can proceed along the same line of arguments as in the previous section and under the assumption of existence of sufficient higher order moments of radius of influence r.v. Theorem \ref{thm:restarst_rightmost-speed} can be proved. However, in this article we chose to work with the exponentially decaying radii of influence r.v.'s.  
	}
	\begin{theorem}
		\label{thm:restarst_rightmost-speed}
		There exists some deterministic constant $\mu^\prime > 0$ such that almost surely we have 
		$$
		\lim_{n \to \infty} r_n^R / n = \mu^\prime . 
		$$
	\end{theorem}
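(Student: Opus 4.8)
The plan is to mimic the renewal strategy of Section \ref{sec:speedrttail}, but adapted to the reactivation setting, where the principal new difficulty is that the rumour cluster never ``forgets'' its past: reactivated vertices on the negative side can keep influencing the positive side at all later times. So the clean separation achieved by the random time $\sigma$ in \eqref{def:Sigma} is no longer available for free, and the renewal steps must be defined so that the evolution of the rightmost vertex after a renewal is genuinely independent of everything that happened before it. First I would set up the natural filtration $\mathcal{F}_n := \sigma( I^m_z, B^{m}_z : z \in \A^{\text{R}}_{n-1}, m \leq n )$ and observe that $\{(\A^{\text{R}}_n, \tA^{\text{R}}_n) : n \geq 0\}$ is Markov with respect to it. The exponential tail assumption (a) on $I^0_0$, together with item (iii)(c) of \Cref{lem:rumour-block}, controls overshoots: at every step the overshoot of the active set beyond its current rightmost point is stochastically dominated by an exponentially-tailed overshoot r.v. $O$, uniformly in the past.

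Next I would define renewal steps. Because of reactivation one cannot simply wait for $\#\tA^{\text{R},+}_n = 1$; instead I would look for a time $n$ at which several good events occur simultaneously: (i) the rightmost vertex advances by exactly $1$, i.e.\ $r^{\text{R}}_n - r^{\text{R}}_{n-1} = 1$; (ii) a ``barrier'' block of consecutive sites immediately to the left of $r^{\text{R}}_n$, of some fixed width $K$, all have radius of influence equal to $0$ at that step \emph{and} at a suitable window of subsequent steps, so that no vertex strictly to the left of $r^{\text{R}}_n$ can ever reach past $r^{\text{R}}_n$; and (iii) the Bernoulli reactivation variables in that barrier window are all $0$. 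Condition (b), $\P(I^0_0 = 0) = p_1 > 0$, is exactly what makes the barrier event have positive probability at each attempt. The key structural claim is then a regeneration statement: on the event that such a time $n$ is a renewal time, the future process $\{(r^{\text{R}}_{n+m} - r^{\text{R}}_n) : m \geq 0\}$ is independent of $\mathcal{F}_n$ and has the law of the same functional of a fresh rumour-with-reactivation process started from a single vertex. I would prove finiteness of the inter-renewal times by the same uniform-positive-lower-bound argument as in \eqref{eq:Dominate_ActiveSet}: each of the finitely many required coincidences has probability bounded below by a constant depending only on $p_1, p_2$ and the barrier width, and the exponential overshoot tail guarantees that the barrier, once demanded, is not breached. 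A Borel–Cantelli / geometric-trials argument then gives $\P(\tau_{j+1} - \tau_j > n) \leq C_0 e^{-C_1 n}$, and the exponential tails of $O$ propagate (via \Cref{lem:RandomSumMoment} applied to a geometric random sum, exactly as in part (ii) of \Cref{prop:Tau_Tail}) to give exponential tails, hence finite mean, for the increments $r^{\text{R}}_{\tau_{j+1}} - r^{\text{R}}_{\tau_j}$.

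Given the regeneration structure, $\{(\tau_{j+1} - \tau_j,\ r^{\text{R}}_{\tau_{j+1}} - r^{\text{R}}_{\tau_j}) : j \geq 1\}$ is an i.i.d.\ sequence of $\N \times \N$-valued vectors with finite mean in each coordinate, and the argument of \Cref{cor:Understanding_Limit} applies verbatim: sandwiching $n$ between $\tau_{k(n)}$ and $\tau_{k(n)+1}$, using $\tau_{k+1}/\tau_k \to 1$ and the strong law of large numbers for the numerator and denominator separately, yields
$$
\lim_{n \to \infty} \frac{r^{\text{R}}_n}{n} = \mu^\prime := \frac{\E\bigl(r^{\text{R}}_{\tau_2} - r^{\text{R}}_{\tau_1}\bigr)}{\E(\tau_2 - \tau_1)} \quad \text{a.s.},
$$
and $\mu^\prime > 0$ since $\E(\tau_2 - \tau_1) < \infty$ while the increment $r^{\text{R}}_{\tau_2} - r^{\text{R}}_{\tau_1} \geq 1$ a.s.

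The step I expect to be the main obstacle is establishing the regeneration (strong Markov / independence-of-the-future) property rigorously: one must show that the demanded barrier of zero-radius sites, reinforced over an appropriate time window and combined with the exponential overshoot bound, really does insulate the future rightward evolution from the (infinitely long-lived, reactivating) left part of the cluster, so that the post-renewal process has exactly the law of a freshly started process. Getting the barrier width and time-window bookkeeping right — large enough that no past vertex can ever jump the barrier given the exponential tail, yet with probability bounded below so renewals recur — is the delicate part; everything after that is the standard renewal-ratio limit.
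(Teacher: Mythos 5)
Your overall strategy (renewal steps with i.i.d.\ increments, exponential tails, then the renewal-ratio limit of \Cref{cor:Understanding_Limit}) is the right one and matches the paper, but the mechanism you propose for the renewal event has a genuine gap. You ask for a barrier of $K$ consecutive zero-radius sites held over ``a suitable window of subsequent steps'' and claim that finitely many such coincidences, each of probability bounded below, guarantee that the barrier ``once demanded, is not breached.'' This cannot work: under reactivation every vertex to the left of the barrier draws a \emph{fresh}, unbounded radius of influence at \emph{every} future time at which its Bernoulli clock rings, so no event measurable with respect to a finite time window and a finite block of sites can prevent a left vertex from being reactivated much later with a radius large enough to jump the barrier. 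The renewal event is unavoidably an event about the infinite future. The paper's construction accepts this: the renewal at time $n$ is the event $\text{Dom}(r^R_n)$ of \cref{def:Dom_Event}, namely that the one-sided process started from the current right edge dominates the left cluster for \emph{all} future times; its probability is bounded below not by finitely many coincidences but by coupling a positive-drift walk $X_n \le r^R_n$ that stays above $\tfrac34 n\theta$ forever against left overshoots that stay below $\tfrac14 n\theta$ forever (\Cref{lem:Reactivate_Tau_LowerBd}, via \Cref{cor:E2_Event} --- which is where the hypothesis $p_1>0$ actually enters, through $\P(O=0)>0$, rather than through a deterministic barrier).

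Consequently your regeneration claim --- that after a renewal the future right-edge process ``has the law of a freshly started process'' and is independent of the past --- is not quite right, and it hides the hardest part of the argument. Because the renewal event constrains the infinite future, the post-renewal increments have the law of the one-sided process \emph{conditioned on} $\text{Dom}(0)$, run until the next domination time $\nu$ (\Cref{prop:Tau_IID_Reactivate}); and proving the exponential tail of the inter-renewal times requires showing that conditioning on all previous renewal events does not inflate the tails of future radii or overshoots. The paper does this via the stopping variables $\beta^R(u)$, the truncated events $\text{Dom}^{(n)}(u)$, the event decompositions of \Cref{lem:EventDecomposition_1} and \Cref{cor:IncrDistEquality_1}, and the uniform conditional tail bound of \Cref{lem:Reactivate_j}. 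None of this bookkeeping is optional, and your sketch, which treats the renewal as a finite-window event followed by unconditioned regeneration, does not supply a substitute for it. The final renewal-ratio step and the positivity of $\mu^\prime$ are fine once the i.i.d.\ structure and finite means are in hand.
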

	\update{The next section and the rest of the paper is dedicated to the proof of Theorem \ref{thm:rightmost-speed}. Though the proof technique is the same, because of the re-activation the renewal construction requires non-trivial modification. }

	\subsection{Renewal process for reactivation rumour process}
	\label{subsec:Renewal_restart}

	We need to modify our renewal construction in \Cref{subsec:RenewalConstruction}  in a non-trivial manner such that it takes care of the re-activation mechanism. 
	Recall that we have started with two independent collections $\{I^n_z : n \in \Z^+, z \in \Z \} $ and 
	$\{ B^n_z : n \in \Z^+, z \in \Z \}$ of i.i.d. random variables. Before proceeding further, we define an one-sided version of our rumour 
	propagation model, which we will use in our renewal event construction.
	
	For $u  \in \Z$ we define an one-sided version of rumour propagation 
	such that starting from $u$ the rumour propagates towards the right side of $u$ only, i.e., on the set $[u, \infty)\cap \Z$. 
	Set $\A^{R,+,u}_0 = \tA^{R,+,u}_0 = \{u\}$ and define
	\begin{align*}
		\tA^{R,+,u}_1 &  := \{ v  \in \Z : v > u  : |v - u| \leq I^0_u \} \cup\{ v \in \A^{R,+,u}_0 : B^1_v = 1 \} \text{ and }\\
		\A^{R,+,u}_1 &  := \A^{R,+,u}_0 \cup \tA^{R,+,u}_1.    
	\end{align*}
	Given $(\A^{R,+,u}_n, \tA^{R,+,u}_n)$ we define 
	\begin{align*}
		\tA^{R,+,u}_{n+1} &  := \{ v  \in \Z : v > \max(\A^{R,+,u}_n)  : |v - w| \leq I^n_w \text{ for some }
		w \in \tA^{R,+,u}_n \} \cup\{ v \in \A^{R,+,u}_n : B^{n+1}_v = 1 \}, \\
		\A^{R,+,u}_{n+1} &  := \A^{R,+,u}_n \cup \tA^{R,+,u}_{n+1}.    
	\end{align*}
	As earlier, the set $\A^{R,+,u}_{n+1}$ represents the set of vertices that have heard the rumour by time $n+1$ of this one-sided rumour propagation with re-activation model. On the other hand, the set $\tA^{R,+,u}_{n+1}$ represents the set of vertices that are ready to spread the rumour at time $n+1$. For any interval $I$ with $I \cap \Z \neq \emptyset$, we define the processes $\{(\A^{R,+,I}_{n+1}, \tA^{R,+,I}_{n+1}) : n \geq 1\}$ similarly.
	
	We observe that  for any two intervals $I_1$ and $I_2$, the two independent families $\{ I^n_z : n \in \Z^+, z \in \Z\}$ and 
	$\{ B^n_z : n \in \Z^+, z \in \Z\}$ allow us to couple together all the processes mentioned below:
	\begin{align}\label{eq:Processes}
		& \{(\A^{R,I_1}_n, \tA^{R,I_1}_n) : n \geq 0\}, 
		\{(\A^{R,I_2}_n, \tA^{R,I_2}_n) : n \geq 0\} \text{ and } \nonumber \\
		& \{(\A^{R,+,I_1}_n, \tA^{R,+,I_1}_n) : n \geq 0\}, 
		\{(\A^{R,+,I_2}_n, \tA^{R,+,I_2}_n) : n \geq 0\}.
	\end{align}

	Next, for $u \in \Z$ we consider the two processes 
	$\{(\A^{R,(-\infty,u)}_n, \tA^{R,(-\infty,u)}_n) : n \geq 0\}$ and 
	$\{(\A^{R,+,u}_n, \tA^{R,+,u}_n) : n \geq 0\}$ coupled together. For $u \in \Z$, the event `$\text{Dom}(u)$' 
	means that propagation of the rumour towards the 
	right side of $u$ starting from an initial set of active vertices $(-\infty,u)\cap \Z = (-\infty,u-1]\cap \Z$ 
	remains dominated throughout by the one-sided rumour propagation process starting 
	from a singleton active set $\{u\}$. Mathematically, we define this event as 
	\begin{align}\label{def:Dom_Event}
		\text{Dom}(u) := \{ \max(\A^{R,+,(-\infty,u)}_n) \leq \max(\A^{R,+,u}_n) 
		\text{ for all } n \geq 1\}.
	\end{align}
	It is not difficult to observe that for every $n \geq 1$ we have $\A^{R,+,(-\infty,u)}_n = 
	\A^{R,(-\infty,u)}_n$ a.s.
	
	We are now ready to define our renewal steps for the process $\{(\A^{R}_n, \tA^{R}_n) : n \geq 0\}$.
	Given $(\A^{R}_n, \tA^{R}_n)$ we say that the renewal event occurs at the $n$-th step if 
	the event $\text{Dom}(r^R_n) \equiv \text{Dom}(\max(\A^{R}_n))$ occurs.
	The sequence of successive renewal steps is defined below:
	
	Set $\tau^R_0 = -1$. For $j \geq 1$, the $j$-th renewal step for this 
	rumour propagation with reactivation is defined as 
	\begin{align} 
		\label{def:Tau_Reactivation}
		\tau^R_j & := \inf\{ n \in \Z^+, n > \tau^R_{j-1} : \text{Dom}(r^R_n) \text{ occurs}\}.
	\end{align}
	We need to show that the above definition makes sense in the sense that 
	for any $j \geqslant 1$, the r.v.\ $\tau^R_j$ is a.s.\ finite. 
	This is done in Proposition \ref{prop:Tau_Tail_Reactivate}.
	We define two filtrations $\{{\cal F}^R_n : n \geq 0\}$ and $\{{\cal G}^R_n : n \geq 0\}$
	such that ${\cal F}^R_0 = {\cal G}^R_0 = \emptyset$ and for $n \geq 1$
	\begin{align}
		\label{def:filtration}
		{\cal F}^R_n & := \sigma \bigl ( ( I^m_z, B^{m+1}_{z} ) : z \in (-\infty, r^R_{n-1}]\cap \Z , 
		0 \leq m \leq n- 1 \bigr )  \text{ and } \nonumber \\
		{\cal G}^R_n & := \sigma  \bigl  ( (I^m_z, B^{m+1}_{z}, \mathbf{1}_{\text{Dom}(r^R_m)} ) : z \in (-\infty, r^R_{n-1}]\cap \Z ,  0 \leq m \leq n- 1 \bigr ).
	\end{align}
	We observe that both the processes, $\{ (\A^{R,(-\infty,0)}_n, \tA^{R,(-\infty,0)}_n) : n \geq 0\}$ 
	and $\{(\tA^{R}_n, \tA^{R}_n) : n \geq 0\}$, are adapted to the filtration $\{{\cal F}^R_n : n \geq 0\}$. 
	Further, for any $j \geq 1$ the random step $\tau^R_{j}$ is a stopping time 
	w.r.t. the filtration $\{{\cal G}^R_n : n \geq 0\}$. This gives us another filtration 
	$\{{\cal S}^R_j := {\cal G}^R_{\tau^R_j} : j \geq 0\}$ to work with such that the random 
	variable  $\tau^R_j$ is ${\cal S}^R_j$ measurable for all $j \geq 0$.
	\begin{prop} 
		\label{prop:Tau_Tail_Reactivate}
		There exist $C_0, C_1 > 0$ which do not depend  on $j$ such that 
		for any $j \geqslant 0$ we have
		$$
		\P(\tau^R_{j+1} - \tau^R_{j} > n \mid {\cal S}^R_j) 
		\leqslant C_0 e^{(-C_1 n)} \text{ for all }n \in \N .
		$$
	\end{prop}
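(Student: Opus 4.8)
The plan is to deduce the exponential tail from a single uniform one-step estimate: there is a constant $p_{*}\in(0,1]$, depending only on the law of $I^{0}_{0}$ and on $p_{2}$, such that on the event that $m>\tau^{R}_{j}$ and no renewal has occurred in $(\tau^{R}_{j},m]$ one has
$$
\P\bigl(\mathrm{Dom}(r^{R}_{m})\ \big|\ \mathcal{G}^{R}_{m}\bigr)\ \geq\ p_{*}.
$$
By the translation invariance of the two i.i.d.\ families $\{I^{n}_{z}\}$ and $\{B^{n}_{z}\}$, the left-hand side is, through a coupling with a freshly anchored pair of the one-sided processes of (\ref{eq:Processes}), bounded below by $\P(\mathrm{Dom}(0))$, so the whole statement reduces to the single positivity $\P(\mathrm{Dom}(0))>0$. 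Granting the one-step estimate, a run of $n$ consecutive steps after $\tau^{R}_{j}$ with no renewal has conditional probability at most $(1-p_{*})^{n}$, whence $\P(\tau^{R}_{j+1}-\tau^{R}_{j}>n\mid\mathcal{S}^{R}_{j})\leq(1-p_{*})^{n}$, i.e.\ the asserted bound with $C_{1}=-\log(1-p_{*})$ and $C_{0}=(1-p_{*})^{-1}$; since $p_{*}$ depends on nothing but the model parameters, the constants are uniform in $j$.

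The crux is $\P(\mathrm{Dom}(0))>0$. Write $R_{n}:=\max\A^{R,+,0}_{n}$ and $R'_{n}:=\max\A^{R,+,(-\infty,0)}_{n}$, so that $\mathrm{Dom}(0)=\{R'_{n}\leq R_{n}\text{ for all }n\geq1\}$. Morally the singleton process has a head start on the right (it begins with $0$ active and spreading, whereas in the other process $0$ must first be reached from the left) while both grow linearly, so the left-anchored process has a positive chance never to catch up. Two genuinely independent pieces of randomness are at work: $(R_{n})_{n}$ is a function of $\{I^{k}_{z},B^{k}_{z}:z\geq0\}$ only, whereas the only way $R'_{n}$ can be pushed beyond where $R_{n}$ already is comes from sites $z\leq-1$, governed by the independent family $\{I^{k}_{z},B^{k}_{z}:z\leq-1\}$. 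Since at each step the current rightmost vertex of the singleton process is ready to spread and has radius $\geq1$ with probability bounded below by $p_{2}(1-p_{1})>0$, a law-of-large-numbers (equivalently Borel--Cantelli) argument gives a constant $c>0$ with $R_{n}\geq cn$ for all large $n$ almost surely. A site $-v$ with $v\geq1$ can affect the right boundary of the left-anchored process at a step $k$ only by reaching a vertex $>R_{k}$ directly, or by reaching an interior vertex which (being ``fresher'' in the lagging left-anchored process than in the singleton one) spreads further over the next few steps; in either case assumption (a) together with $R_{k}\geq ck$ makes the relevant probabilities exponentially small in $v+k$, so $\sum_{k\geq1}\sum_{v\geq1}(\cdots)<\infty$ and, by Borel--Cantelli, only finitely many such dangerous contributions are possible. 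Forcing the finitely many exceptional radii and Bernoulli variables near the origin to be small — a positive-probability event supported on negative sites and a bounded set of positive sites, hence independent of $\{R_{n}\geq cn\ \forall n\text{ large}\}$ — eliminates all of them, and on the resulting intersection $R'_{n}\leq R_{n}$ for every $n$, giving $\P(\mathrm{Dom}(0))>0$.

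Two steps are delicate. The genuine obstacle is the coupling behind the previous paragraph: because of reactivation the two coupled processes do not carry the same ``ready-to-spread'' set inside the common bulk $[0,R_{n}]$, so ``no left site ever overtakes'' is not literally the same as $\mathrm{Dom}(0)$; one must treat the difference $R_{n}-R'_{n}$ as a random-walk-like gap that the head start keeps positive, show that its probability of ever reaching $0$ is small when it is large, and control it uniformly using the exponentially light tails. The secondary nuisance is turning the one-step estimate into the claimed geometric bound: the events $\mathrm{Dom}(r^{R}_{m}),\mathrm{Dom}(r^{R}_{m+1}),\dots$ tested at consecutive steps are built from overlapping randomness, so one cannot simply multiply, and it is precisely for this that the nested filtrations $\mathcal{F}^{R}_{n}\subseteq\mathcal{G}^{R}_{n}$ and the stopping-time $\sigma$-field $\mathcal{S}^{R}_{j}=\mathcal{G}^{R}_{\tau^{R}_{j}}$ of (\ref{def:filtration})--(\ref{def:Tau_Reactivation}) are set up, allowing the per-step lower bound to be applied conditionally on the past and iterated.
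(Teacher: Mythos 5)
Your reduction to the positivity of $\P(\mathrm{Dom}(0))$, and your proof of that positivity via the positive-drift walk at the rightmost vertex together with Borel--Cantelli control of the left overshoots, is essentially the paper's Lemma \ref{lem:Reactivate_Tau_LowerBd} and is sound. The gap is in the step you yourself flag as a ``secondary nuisance'' and then wave away: the claimed one-step estimate $\P(\mathrm{Dom}(r^R_m)\mid \mathcal{G}^R_m)\geq p_*$ on the event that no renewal occurred in $(\tau^R_j,m]$ does \emph{not} follow from translation invariance, because the conditioning is not on past randomness. By (\ref{def:filtration}), $\mathcal{G}^R_m$ contains the indicators $\mathbf{1}_{\mathrm{Dom}(r^R_{m'})}$ for $m'<m$, and each failure $\{\beta^R(r^R_{m'})<\infty\}$ is an event about the radius and Bernoulli variables at \emph{all} future times --- the overtaking that witnesses the failure may not yet have happened by time $m$. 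Conditioning on $n$ consecutive failures therefore tilts exactly the randomness needed for the next test (recent failures make a large future radius at a left site more likely, which also threatens $\mathrm{Dom}(r^R_m)$), so ``apply the per-step bound conditionally and iterate'' is precisely what cannot be done; asserting that the filtrations are ``set up for this'' does not supply the argument, and the resulting bound $(1-p_*)^n$ is unjustified.

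The paper's resolution is a mechanism your proposal is missing: after a failed test one does \emph{not} test again at the next step, but waits $\beta^R(\cdot)+1$ steps, i.e.\ until the overtaking responsible for the failure has actually occurred, so that the information revealed by the failure is exhausted before the next test. The real content is then (i) showing $\P(n<\beta^R_1<\infty)$ decays exponentially --- split according to whether $r^R_{n+l}\geq\tfrac34(n+l)\theta$, bound one piece by the tail of the overshoot $O^{(-\infty,-1],+}_{n+l}$ and the other by a Chernoff bound for the drift walk --- and (ii) dominating $\tau^R_1$ by a geometric (parameter $\tilde p$) sum of the $\beta^R_k$'s and invoking Lemma \ref{lem:RandomSumExpTail}. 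For $j\geq1$ there is a second issue you do not address at all: conditioning on $\mathcal{S}^R_j$ includes conditioning on $\mathrm{Dom}(w_j)$ having \emph{occurred}, which again constrains the infinite future; the paper needs Lemma \ref{lem:EventDecomposition_1}, Corollary \ref{cor:IncrDistEquality_1} and Lemma \ref{lem:Reactivate_j} to factor this event out as an independent piece and to show the conditional radius tails remain uniformly exponential (at the cost of a factor $\P(\mathrm{Dom}(0))^{-1}$). Without these two ingredients the proposal does not close.
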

	We first prove Proposition \ref{prop:Tau_Tail_Reactivate} for $j = 0$ and we will it prove for general 
	$j > 0$ later. In order to do that, we need the following lemma 
	which proves that given ${\cal F}^R_n$, the probability that the $n$-th step is a renewal step or a $\tau^R$ step has a strictly positive uniform lower bound.
	
	\begin{lemma}
		\label{lem:Reactivate_Tau_LowerBd}
		For any $n \geq 0$ there exists $\tilde{p} > 0$ not depending on $n$ such that 
		$$
		\P(n-\text{th step is a renewal step} \mid {\cal F}^R_{n}) = \P( \text{Dom}(r^R_n) \mid {\cal F}^R_{n}) \geq \tilde{p}.
		$$
	\end{lemma}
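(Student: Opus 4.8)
The plan is to reduce the statement to the single estimate $\P(\text{Dom}(0)) > 0$ and then build an explicit good event of positive probability. For the reduction I would note that $r^R_n$ is $\mathcal{F}^R_n$-measurable, so conditioning on $\mathcal{F}^R_n$ fixes $u := r^R_n$; moreover the pair of coupled processes $\bigl(\A^{R,+,(-\infty,u)}_\cdot,\, \A^{R,+,u}_\cdot\bigr)$ defining $\text{Dom}(u)$ is driven, from the step after time $n$ onwards, by variables $(I^m_z, B^{m+1}_z)$ that are independent of $\mathcal{F}^R_n$ (which only exposes the variables with time index $\le n-1$ at sites $\le r^R_{n-1}$). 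Combined with spatial translation invariance of the two i.i.d.\ driving families, this gives
$$
\P\bigl(\text{Dom}(r^R_n)\mid \mathcal{F}^R_n\bigr) = \P\bigl(\text{Dom}(0)\bigr) =: \tilde p,
$$
which is already uniform in $n$, so everything rests on showing $\tilde p > 0$.

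To show $\P(\text{Dom}(0)) > 0$, write $X_m := \max\bigl(\A^{R,+,(-\infty,0)}_m\bigr)$ and $Y_m := \max\bigl(\A^{R,+,0}_m\bigr)$, so that $X_0 = -1 < 0 = Y_0$ and $\text{Dom}(0) = \bigcap_{m \ge 1}\{X_m \le Y_m\}$. The first ingredient is a favourable opening step: on the event that $I^0_0 \ge N$ (for a large $N$ to be fixed) and that $I^0_v \le |v| - 1$ for every $v \le -1$, we get $X_1 = -1$ and $Y_1 = I^0_0 \ge N$, so a gap of size $\ge N+1$ is created at time $1$. By Lemma \ref{lem-infprod}, the positivity $p_1 = \P(I^0_0 = 0) > 0$ (needed for the factor $\P(I^0_{-1} = 0)$) together with the exponential tail of $I^0_0$ (needed for the convergence of $\sum_k \P(I^0_0 > k)$) makes this event have positive probability.

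The second ingredient is a ``stay-ahead'' argument: assuming $X_m \le Y_m$, the step can only fail if some currently active vertex of the $(-\infty,-1]$-process overshoots $Y_{m+1}$. Using that on the common interval $[0, X_m] \subseteq [0, Y_m]$ the Bernoulli reactivation clocks of the two processes are matched (so the reach contributed to $X_{m+1}$ from $[0,X_m]$ is dominated by the reach the $\{0\}$-process draws from the same vertices with the same radii, hence by $Y_{m+1}$), the only remaining danger is a far-behind reactivated vertex $v \le -1$ with $v + I^m_v > Y_{m+1}$; since $Y_{m+1} \ge Y_m$, the exponential tail bound gives a conditional step-failure probability of at most $\sum_{v \le -1} C_0 e^{-C_1(Y_m + |v|)} \le C_0' e^{-C_1 Y_m}$. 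Since the frontier $Y_m$ of the supercritical $\{0\}$-process grows at least linearly almost surely (a soft input: the front vertex reactivates within geometric time and then advances by $\ge 1$ with probability $\ge \P(I^0_0 \ge 1) > 0$, so $Y_m$ dominates a finite-mean renewal process and the SLLN applies), the series $\sum_m e^{-C_1 Y_m}$ converges, and a Borel--Cantelli / infinite-product argument (again Lemma \ref{lem-infprod}) on the tail $m \ge M$, after a harmless finite product over $1 \le m < M$, shows that, conditionally on the opening step, the event ``no overshoot ever'' — which inductively forces $X_m \le Y_m$ for all $m$, i.e.\ $\text{Dom}(0)$ — has positive probability.

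The main obstacle is this last step: controlling, uniformly over the infinitely many time steps, the long-range effect of the infinitely many informed sites to the left of $0$, each reactivating at rate $p_2$ and each able in principle to shoot past the frontier of the $\{0\}$-process. Making the per-step overshoot probabilities summable forces one to keep the gap $Y_m - X_m$ from shrinking, which is exactly why both the exponential-tail assumption (for the geometric-series overshoot bound) and a linear-growth lower bound on $Y_m$ are needed; keeping the two clusters comparable on their common interval, so that a bounded overshoot genuinely yields $X_{m+1} \le Y_{m+1}$, is the delicate bookkeeping that the argument must carry out with care.
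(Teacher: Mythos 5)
Your proposal follows essentially the paper's route, and the deviations are organisational rather than conceptual, so a short comparison is worthwhile. The paper likewise reduces to $\P(\text{Dom}(0))>0$ and derives it from exactly your two ingredients: a linear lower bound on the frontier $Y_m$ of the $\{0\}$-process, obtained by coupling with a drift-$\theta$ random walk that ticks whenever the current frontier vertex reactivates and has radius $\geq 1$ (with $\theta=\P(B^0_0=1,\,I^0_0\geq 1)$ as in \cref{def:Theta}), and exponential-tail control of the overshoots coming from $(-\infty,-1]$. The differences are: (1) the paper needs no opening gap; and (2) instead of your conditional per-step failure bound $C_0'e^{-C_1 Y_m}$ followed by Borel--Cantelli, it intersects the two events $\{Y_m\geq \tfrac{3}{4}\theta m \text{ for all }m\}$ and $\{O^{(-\infty,-1],+}_m\leq \tfrac{1}{4}\theta m \text{ for all }m\}$, each of positive probability (the latter via \Cref{lem:rumour-block} and \Cref{cor:E2_Event}, which is where $p_1>0$ enters in place of your Lemma~\ref{lem-infprod} computation) and supported on disjoint sets of driving variables, so the lower bound is a clean product $\eta_1\eta_2$ with no conditioning. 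Your version can be made to work, but you should state explicitly that, given the full trajectory $(Y_m)_m$ (a function of variables at sites $\geq 0$ only), the overshoot events at distinct times involve the disjoint families $\{I^m_v : v\leq -1\}$ and hence are conditionally independent; that is what licenses your infinite product. Finally, the ``delicate bookkeeping'' you flag on the common interval is a genuine soft spot, and your matched-clocks justification is not quite right as stated: a vertex $w$ lying strictly between the two frontiers at time $m-1$ which the $(-\infty,0)$-process reaches for the first time at step $m$ is automatically active in that process, whereas in the $\{0\}$-process it is an already-informed vertex that spreads only if $B^m_w=1$, so its radius $I^m_w$ need not be ``seen'' by $Y_{m+1}$. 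The paper's own proof asserts the implication without addressing this case either, so your sketch is at the level of rigour of the target, but the domination of the contribution from $[0,X_m]$ is not the pure coupling triviality your parenthetical suggests.
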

	\update{To prove \Cref{lem:Reactivate_Tau_LowerBd} we need the following corollary 
		involving a family of random variables with uniformly bounded $2 + \epsilon$ moments for some $\epsilon > 0$.
		\begin{corollary}
			\label{cor:E2_Event}
			Consider a family of i.i.d. random variables $\{ Y_n : n \in \N\}$  with finite $2 + \epsilon$ moment for some $\epsilon > 0$. In addition, we assume that 
			$\P(Y_1 = 0) > 0$. Then for any $\gamma > 0$, we have 
			$$
			\P( Y_n \leq n\gamma \text{ for all }n \geq 1) > 0. 
			$$
		\end{corollary}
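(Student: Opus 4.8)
The plan is to rewrite the target event as a countable intersection of independent events and then invoke Lemma~\ref{lem-infprod}. Setting $q_n := \P(Y_n > n\gamma)$, which equals $\P(Y_1 > n\gamma)$ by the i.i.d.\ assumption, independence gives
$$
\P\bigl( Y_n \leq n\gamma \text{ for all }n \geq 1 \bigr) = \prod_{n=1}^\infty \bigl( 1 - q_n \bigr),
$$
so it suffices to show that this infinite product converges to a non-zero number.

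First I would check that $q_n < 1$ for every $n \geq 1$, so that Lemma~\ref{lem-infprod} applies: since $n\gamma \geq \gamma > 0$, the event $\{Y_1 = 0\}$ is contained in $\{Y_1 \leq n\gamma\}$, whence $q_n \leq 1 - \P(Y_1 = 0) < 1$ by hypothesis. This is the only place the assumption $\P(Y_1 = 0) > 0$ enters, and it is genuinely needed: if $\P(Y_1 > \gamma) = 1$ the product has a zero factor. Then, by Lemma~\ref{lem-infprod}, the product converges to a non-zero number if and only if $\sum_{n=1}^\infty q_n < \infty$. Using Markov's inequality together with the finite $(2+\epsilon)$-th moment,
$$
q_n = \P(Y_1 > n\gamma) \leq \frac{\E(Y_1^{2+\epsilon})}{(n\gamma)^{2+\epsilon}},
$$
and hence $\sum_{n=1}^\infty q_n \leq \gamma^{-(2+\epsilon)}\,\E(Y_1^{2+\epsilon}) \sum_{n=1}^\infty n^{-(2+\epsilon)} < \infty$, which finishes the argument.

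I do not anticipate a real obstacle here: this is essentially a Borel--Cantelli / infinite-product computation, and the only point requiring a little care is the bookkeeping that keeps every factor of the product strictly inside $[0,1)$, which is precisely what the hypothesis $\P(Y_1 = 0) > 0$ supplies. I note in passing that a finite $(1+\epsilon)$-th moment would already suffice for this particular statement; the stronger moment assumption is presumably stated for uniformity with how the corollary is applied in the proof of \Cref{lem:Reactivate_Tau_LowerBd}.
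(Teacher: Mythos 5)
Your proof is correct, but it takes a different route from the paper's. You factor the event directly: since the $Y_n$ are independent, $\P(Y_n \leq n\gamma \text{ for all } n) = \prod_{n=1}^{\infty}(1 - q_n)$ with $q_n = \P(Y_1 > n\gamma)$, you verify $q_n \leq 1 - \P(Y_1 = 0) < 1$ so that \Cref{lem-infprod} applies, and you get $\sum_n q_n < \infty$ from Markov's inequality. The paper instead runs a Borel--Cantelli-style argument: it defines $A_n := \{Y_{n+j} > (n+j)\gamma \text{ for some } j \geq 1\}$, shows $\sum_n \P(A_n) < \infty$ via a double sum, lets $N$ be the last index at which $A_n$ occurs, picks $K$ with $\P(N \leq K) \geq 1/2$, and intersects $\{N \leq K\}$ with the independent event $\cap_{n=1}^{K}\{Y_n = 0\}$. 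Both arguments are valid; yours is the more economical of the two, it uses \Cref{lem-infprod} exactly as that lemma is used elsewhere in the paper (e.g.\ in part (i) of \Cref{lem:rumour-block}), and your closing observation is accurate: your single sum $\sum_n n^{-(2+\epsilon)}$ would converge already under a finite $(1+\epsilon)$-th moment, whereas the paper's double sum collapses to $\sum_m (m-1)/m^{2+\epsilon}$ and genuinely needs the $(2+\epsilon)$-th moment. The paper's heavier machinery pays off only in that the ``last bad index'' device is the template reused later (for instance in the proof of \Cref{prop:Tau_Tail_Reactivate}) where the relevant families are no longer i.i.d.\ and a clean product factorization is unavailable; for the corollary as stated, your argument suffices.
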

		\begin{proof}
			We have assumed that $\E(Y_1)^{2 +\epsilon} = M  < \infty$.
			Define the event 
			$$A_n := \{ Y_{n+j} > (n + j)\gamma \text{ for some }j \geq 1\}$$ and we obtain  
			\begin{align}
				\label{eq:React_Cor}
				\P(A_n) = & \P( \cup_{j=1}^\infty (Y_{n+j} > (n+j)\gamma) ) \nonumber \\
				\leq  & \sum_{j=1}^\infty \P(  Y_{n+j} > (n+j)\gamma ) \nonumber \nonumber \\
				\leq  & \sum_{j=1}^\infty \E(  Y_{n+j})^{2+\epsilon} / ((n+j)\gamma )^{2+\epsilon} \nonumber \\
				\leq  & \sum_{j=1}^\infty \E(  Y_{n+j})^{2+\epsilon} / ((n+j)\gamma )^{2+\epsilon} \nonumber \\
				=   & M \gamma^{-(2+\epsilon)} \sum_{j=1}^\infty 1 / (n+j)^{(2+\epsilon)}. 
			\end{align}
			We define $
			N := \sup \{ n \geq 1 : A_n \text{ occurs}\}$,
			i.e., the last time the event $A_n$ has occurred. Equation (\ref{eq:React_Cor}) gives us 
			$$
			\sum_{n = 1}^\infty \P(A_n) \leq  
			M  \sum_{n = 1}^\infty \sum_{j=1}^\infty 1 / ((n+j)\gamma )^{2+\epsilon} = 
			M \sum_{m = 1}^\infty  (m-1)/ (m\gamma )^{2+\epsilon} < \infty,
			$$
			implying that the r.v. $N$ is finite a.s. Choose $K \in \N$ such that $\P(N \leq K) \geq 1/2$. On the other hand, we obtain $$
			\P(\cap_{n=1}^{K} (Y_n \leq n \gamma)) \geq
			\P(\cap_{n=1}^{K} (Y_n = 0)) = \P(Y_1 =0)^K > 0,
			$$
			by our assumption. Further, the event $\{N \leq K\}$ depends only on the collection $\{Y_{K+1}, Y_{K+2},\cdots  \}$ and therefore, independent of the event $\cap_{n=1}^{K} (Y_n \leq n \gamma)$. Finally, our proof follows from the event inclusion relation given below:   $$
			\{Y_n \leq n \gamma \text{ for all }n\} \supseteq  (\cap_{n=1}^{K} (Y_n = 0))\cap (N \leq K).
			$$
			This completes the proof.
	\end{proof}}

	It is important to observe that in \Cref{cor:E2_Event} we don't require $\{ Y_n : n \in \N\}$ to be 
	an i.i.d. collection. We are ready to prove \Cref{lem:Reactivate_Tau_LowerBd} now.
	
	\begin{proof}[\bf Proof of \Cref{lem:Reactivate_Tau_LowerBd}:] 
		We prove \Cref{lem:Reactivate_Tau_LowerBd} for $n = 0$. For general $n \geq 0$ the argument is 
		exactly the same. We couple a positive drift random walk with the evolution of the rightmost vertex of the 
		rumour cluster as follows. Set $X_0 = 0$ and for $n \geq 1$ let
		\begin{align*} 
			X_{n} := \begin{cases}
				X_{n-1} + 1 & \text{ if }B^{n}_{r^R_{n-1}} = 1, I^{n}_{r^R_{n-1}} \geq 1\\
				X_{n-1} & \text{ otherwise}.
			\end{cases}
		\end{align*}
		By construction $\{ X_n : n \geq 0\}$ is a positive drift random walk with i.i.d. 
		increments such that we have $\lim_{n \to \infty} X_n/n = \theta $ almost surely where 
		\begin{align}
			\label{def:Theta}   
			\theta = \P(B^0_{0} = 1, I^0_{0} \geq 1) > 0 .
		\end{align}
		Therefore, there exists $\eta_1 > 0$ such that 
		$$
		\P( X_n \geq  \frac{3}{4} n \theta   \text{ for all } n \in \Z^+) = \eta_1.
		$$
		Our construction ensures that $X_n \leq r_n^R$ for all $n \geq 0$ a.s.
		
		Next, we consider the sequence of `right' overshoot random variables given by 
		$\{ O^{(-\infty, -1],+}_{n}: n \in \Z^+ \}$, 
		where $O^{(-\infty, -1], +}_{n}$ represents the amount of overshoot towards on $\Z^+$ due to 
		the family $\{ I_z^n : z \in (-\infty, -1] \cap  \Z \}$. The translation invariance nature of our model ensures that the collection $\{ O^{(-\infty, -1],+}_{n}: n \in \Z^+ \}$ gives a sequence of i.i.d. copies of the overshoot r.v. $O$, where $O$ is defined as in \cref{def:J_Geometric_Tail}. 
		Define  the event $E$ as
		$$
		E := \{ O_{n}^{(-\infty, -1],+} \leq \frac{1}{4} n \theta  \text{ for all }n \in \Z^+\}.
		$$
		\Cref{lem:rumour-block} together with \Cref{cor:E2_Event} ensure that $\P(E) \geq \eta_2$ for some $\eta_2 > 0 $.
		
		The occurrence of the event $\{ X_n \geq \frac{3}{4} n \theta  \text{ for all } n \in \Z^+ \}\cap E$ implies the occurrence of the renewal step at the very first step. Further, these two events are supported on disjoint sets of random variables and they are therefore independent.
		This gives us
		$$
		\P(0-\text{th step is a renewal step }\mid {\cal F}^R_0) \geq \eta_1 \eta_2 > 0,
		$$
		and completes the proof. 
	\end{proof}
	
	\update{Additionally to prove \Cref{prop:Tau_Tail_Reactivate} we need \cite[Corollary 2.7]{RSS23}, which
		is a general result giving a sufficient condition for exponential tail decay of a random sum of random variables 
		which need not be i.i.d. We will use this condition repeatedly in this section. For completeness, we mentioned the details below. We start with a notion 
		of `uniform exponential decay' and `strong  exponential decay' for a family of r.v.'s. 
		
		\begin{definition}
			\label{def:Exp_Tail}
			We say that a family of r.v.'s $\{X_i : i \geqslant 1\}$ has uniform exponential tail 
			decay if uniformly for all $i \geqslant 1$ there exist constants $C_0, C_1 > 0$ such that 
			\begin{align}
				\label{eq:UnifExpDecay_1}
				\P \bigl ( |X_i| > n ) \bigr )
				\leqslant C_0 e^{(-C_1 n)} \text{ for all }n \in \N,
			\end{align}
			where $C_0, C_1$ do not depend on $i$.
			
			We say that a family of r.v.'s $\{X_i : i \geqslant 1\}$ has strong  
			exponential tail decay if uniformly for all $i \geqslant 1$ there exist constants $C_0, C_1 > 0$ such that 
			\begin{align}
				\label{eq:StrongExpDecay_1}
				\P \bigl ( X_i > n \mid (X_{i-1},\cdots, X_1) \bigr )
				\leqslant C_0 e^{(-C_1 n)} \text{ for all }n \in \N.
			\end{align}
		\end{definition} 
		
		\begin{lemma}  
			\label{lem:RandomSumExpTail} 
			Consider a family of r.v.'s $\{ X_i: i\geqslant 1\}$ with strong  exponential 
			tail decay and a positive integer-valued r.v. $Y$ with an exponentially decaying tail.
			Then there exist $C_0, C_1 > 0$ such that for all $ n \in \N$ we have
			$$
			\P( \sum_{i=1}^Y |X_i| > n) \leqslant  C_0 e^{-C_1 n}.
			$$
		\end{lemma}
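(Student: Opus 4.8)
The plan is to run a Chernoff-type argument on the partial sums, uniformising over the (non-i.i.d.) conditional structure via the tower property, and then to split the event according to whether the number of summands $Y$ is small or large. Throughout I would write $\mathcal F_i := \sigma(X_1,\dots,X_i)$, with $\mathcal F_0$ trivial, and work with $|X_i|$; in the places where this lemma is applied the $X_i$ are nonnegative, so $|X_i| = X_i$, but the argument does not use this.

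First I would convert the strong exponential tail decay of Definition~\ref{def:Exp_Tail} into a uniform bound on conditional exponential moments. There are constants $C_0, C_1 > 0$, not depending on $i$, with $\P(|X_i| > t \mid \mathcal F_{i-1}) \le C_0 e^{-C_1 t}$ for all $t \geq 1$ (and hence, up to adjusting $C_0$, for all real $t \ge 0$). Fixing any $s \in (0, C_1)$ and integrating this conditional tail bound produces a deterministic finite constant $M = M(s,C_0,C_1)$, which we may take $\geq 1$, such that $\E(e^{s|X_i|}\mid \mathcal F_{i-1}) \le M$ almost surely for every $i$. Iterating this with the tower property gives $\E\bigl(e^{s\sum_{i=1}^m |X_i|}\bigr) \le M^m$ for every $m \geq 1$, and Markov's inequality then yields $\P\bigl(\sum_{i=1}^m |X_i| > n\bigr) \le M^m e^{-sn}$ for all $m,n$.

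Next I would bring in the exponential tail of $Y$: choose $C_0', C_1' > 0$ with $\P(Y > n) \le C_0' e^{-C_1' n}$, fix a small $\delta > 0$ to be determined, and set $m_n := \lfloor \delta n \rfloor$. Since the summands are nonnegative, on the event $\{Y \le \delta n\}$ one has $\sum_{i=1}^Y |X_i| \le \sum_{i=1}^{m_n} |X_i|$, so
$$
\P\Bigl(\sum_{i=1}^Y |X_i| > n\Bigr) \le \P(Y > \delta n) + \P\Bigl(\sum_{i=1}^{m_n} |X_i| > n\Bigr) \le C_0' e^{-C_1' \delta n} + M^{\delta n} e^{-sn} = C_0' e^{-C_1'\delta n} + e^{-(s - \delta \log M)\, n}.
$$
Note this step needs no independence between $Y$ and $\{X_i\}$, only nonnegativity of the summands and a union bound. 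Choosing $\delta$ small enough that $s - \delta \log M > 0$ makes both terms decay exponentially in $n$; collecting constants gives the asserted $C_0, C_1 > 0$ with $\P\bigl(\sum_{i=1}^Y |X_i| > n\bigr) \le C_0 e^{-C_1 n}$ for all $n \in \N$.

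The main obstacle — and really the only nonroutine point — is the passage from the one-step \emph{conditional} tail bound to the deterministic estimate $\E\bigl(e^{s\sum_{i=1}^m |X_i|}\bigr) \le M^m$: one must check that the constant $M$ obtained by integrating $\P(|X_i| > t \mid \mathcal F_{i-1}) \le C_0 e^{-C_1 t}$ is genuinely nonrandom and independent of $i$, so that the tower-property induction closes despite the lack of independence among the $X_i$. Once that is secured, the split on $\{Y \le \delta n\}$ versus $\{Y > \delta n\}$ and the calibration of $\delta$ against $\log M$ are elementary.
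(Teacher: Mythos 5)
Your proof is correct. Note that the paper itself does not prove this lemma: it simply cites Corollary~2.7 of \cite{RSS23}, so there is no in-paper argument to compare against. Your Chernoff-type route --- integrating the uniform conditional tail bound to get a deterministic bound $M$ on the conditional exponential moment, closing the induction via the tower property to obtain $\E\bigl(e^{s\sum_{i=1}^m |X_i|}\bigr)\le M^m$, and then splitting on $\{Y\le \delta n\}$ with $\delta$ calibrated so that $s-\delta\log M>0$ --- is the standard and essentially unavoidable argument for such random-sum tail bounds (a naive union bound over the $\lfloor\delta n\rfloor$ summands fails here, since it only forces each $|X_i|\le n/m\approx 1/\delta$, a constant), and it is the same mechanism used in the cited reference. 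You correctly identify the one delicate point, namely that $M$ must be nonrandom and independent of $i$, which it is because the conditional tail is dominated by the deterministic function $C_0e^{-C_1 t}$. The only blemish is inherited from the paper: Definition~\ref{def:Exp_Tail} states strong decay for $\P(X_i>n\mid\cdots)$ rather than $\P(|X_i|>n\mid\cdots)$, while the conclusion concerns $\sum|X_i|$; you flag this mismatch and note that all applications involve nonnegative $X_i$, which is the right way to handle it.
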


		For a proof of \Cref{lem:RandomSumExpTail} we refer the reader to Corollary 2.7 of \cite{RSS23}. 
		It is important to observe that \Cref{lem:RandomSumExpTail} does not 
		assume independence between the r.v. $\sigma$ and the family $\{ X_n : n \in \N \}$ too. 
		Now we use \Cref{lem:Reactivate_Tau_LowerBd}, \Cref{lem:RandomSumExpTail} and proceed to prove  
		\Cref{prop:Tau_Tail_Reactivate} for $j = 0$.
	}
	
	\begin{proof}[\bf Proof of Proposition \ref{prop:Tau_Tail_Reactivate} for $j = 0$:]
		\update{It is important to observe that the 
			occurrence of a renewal event affects the distribution of radius of influence r.v.'s and Bernoulli activation  r.v.'s till the infinite future. Therefore, the argument is much more involved here than that of Proposition \ref{prop:Tau_Tail}.  } 
		We first check for the occurrence of the renewal 
		event at the very first step and Lemma \ref{lem:Reactivate_Tau_LowerBd}
		ensures that we have a strictly positive uniform lower bound for probability of the occurrence of the renewal event. \update{But given the renewal  step has not occurred at the first step gives us certain information about the future evolution of the process and  
			prohibits us to apply the same bound (for probability of occurrence of renewal event) at the next step. We need to wait for some random amount of time to take care of this information. }
		
		For $u \in \Z$ we define the random variable $\beta^R(u)$ as 
		\begin{align}
			\label{def:Beta_Reactivate}
			\beta^R(u) := \inf\{ n \geq 1 : \max(\A^{R,(-\infty,u)}_{n}) > \max(\A^{R, +, u}_{n})\}. 
		\end{align}
		\update{We observe that $\max\left(\A^{R,(-\infty,u)}_{n}\right) = \max\left(\A^{R,(-\infty,u-1]}_{n}\right)$ for all $n$ a.s. and the r.v. $\beta^R(u)$ can take the  value infinity with positive probability.} 
		For the rumour propagation process $\{ (\A^R_n, \tA^R_n) : n \geq 0\}$
		starting from   the origin, we consider the r.v. $\beta^R_1 = \beta^R(0)$ and observe that 
		the  event $\{\beta^R_1 = \infty\}$ is exactly same as the event that the $0$-th step is a renewal step. \update{Given the event that renewal has not occurred at the $0$-th step, which is same as the event $\{ \beta^R_1 < \infty \}$, we have some information till the next $\beta^R_1$ many steps and we can again test for the occurrence of the renewal event post that. On the event $\{ \beta^R_1 < \infty \}$  we run the rumour propagation process for the next $\beta^R_1 + 1$ many steps and  thereafter we consider the r.v. $\beta^R_2 := \beta^R(r^R_{\beta^R_1 + 1})$ and so on.}
		It suffices to show that the family $\{\beta^R_k : k \geq 1\}$ exhibits a
		strong exponential tail decay type behaviour as defined in \Cref{def:Exp_Tail}.
		We prove it for $k = 1$ and for general $k \geq 1$ the argument is the same. 
		
		For showing the exponential tail for $\P( n < \beta^R_1 < \infty )$, we write it as, 
		\begin{align}\label{eq:beta1-exp-s1}
			\P( n < \beta^R_1 < \infty ) \leq \sum_{l = 1}^\infty  \P( \beta^R_1 = n + l , r_{n+l}^R \geq \frac{3}{4} (n + l) \theta  )
			+ \sum_{l = 1}^\infty \P( r_{n+ l}^R <  \frac{3}{4} (n + l) \theta  ) ,
		\end{align}
		where $\theta$ is as in \cref{def:Theta}.
		The second term in \cref{eq:beta1-exp-s1} will be dealt with later using a concentration bound. 
		We now consider the first term in \cref{eq:beta1-exp-s1}. 
		\update{We observe that on the event 
			$$\{ r_{n+l}^R \geq  \frac{3}{4} (n + l)\theta , \beta^R_1 = n + l \}$$
			we must have 
			\begin{align*}
				& \max(\A^{R,(-\infty,-1]}_{m}) \leq \max(\A^{R,+, 0}_{m}) \text{ for all } 0 \leq m \leq n+l-1  \text{ as well as }  \\  
				& \max(\A^{R,(-\infty,-1]}_{n+l}) > \max(\A^{R,+, 0}_{n+l}) \geq  \frac{3}{4} (n + l)\theta.
			\end{align*}
			In order to have $ \max(\A^{R,(-\infty,-1]}_{n+l}) > \frac{3}{4}(n+l)\theta$, there must exist a random variable 
			$I^{n+l}_z$ for some $z \in (-\infty, -1]\cap \Z$ with $$
			I^{n+l}_z + z > \frac{3}{4} (n + l)\theta.
			$$
			This implies that the first term inside the sum in the r.h.s of (\ref{eq:beta1-exp-s1}), viz., $\P( \beta^R_1 = n + l , r_{n+l}^R \geq  \frac{3}{4} (n + l)\theta  )$ for every $l \geq 1$  is bounded by 
			$\P( O^{(-\infty,-1],+}_{n + l} >  \frac{3}{4}(n + l) \theta )$. Hence, we have 
			\begin{align}
				\label{eq:beta1-exp-s3}
				\sum_{l = 1}^\infty  \P( \beta^R_1 = n + l , r_{n+l}^R \geq  \frac{3}{4} (n + l)\theta  ) 
				\leqslant \sum_{l = 1}^\infty  \P( O^{(-\infty,-1],+}_{n + l} > \frac{3}{4} (n + l) \theta  )
				\leqslant \tilde{C}^\prime_0 e^{-\tilde{C}^\prime_1 n },   
			\end{align}
			for some $\tilde{C}^\prime_0, \tilde{C}^\prime_1 > 0$. 
			The last inequality \cref{eq:beta1-exp-s3} follows from \Cref{lem:rumour-block}.
		}

		We now consider the second term in \cref{eq:beta1-exp-s1}.  
		Recall that $\lim_{n \to \infty} X_n / n = \theta$ almost surely.
		From \cite[eq.\ (7)]{hagerup1990guided}, and the fact that $X_n \leq r^R_n$ almost surely for all $n\geq 0$, we have 
		\begin{align}\label{eq:beta1-exp-s5}
			\P( r_{n+ l }^R <  \frac{3}{4} (n + l ) \theta  ) \leqslant 
			\P( X_{n+ l } <  \frac{3}{4} (n + l ) \theta  ) \leqslant B^\prime_0 e^{-B^\prime_1 (n+l) }, 
		\end{align}
		for some $B^\prime_0, B^\prime_1 > 0$. 
		Plugging \cref{eq:beta1-exp-s3} and \cref{eq:beta1-exp-s5} into \cref{eq:beta1-exp-s1} and taking the sum in the second term, we get
		\begin{align*}
			\P( n < \beta^R_1 < \infty ) \leq C^\prime_0 e^{-C^\prime_1 n },
		\end{align*}
		for some $C^\prime_0, C^\prime_1 > 0$. A similar argument shows that the probability 
		$ \P( n < \beta^R_j < \infty \mid \beta^R_{j-1}, \cdots , \beta^R_{1}) $ decays exponentially with uniform decay constants. This ensures that the family $\{\beta_k : k \geq 1\}$ exhibits a strong  exponential tail decay. \update{The total number of steps required for the occurrence of the first renewal event is dominated by geometric sum of $\beta^R_j$'s with success probability $\tilde{p}$ where $\tilde{p}$ is as in  \Cref{lem:Reactivate_Tau_LowerBd}. Therefore,  \Cref{lem:RandomSumMoment} completes the proof for $j = 0$.}
	\end{proof}
	
	We will prove \Cref{prop:Tau_Tail_Reactivate} for general $j \geq 1$ through a sequence of lemmas. 
	We need to be more careful as given that a renewal step has occurred affects distribution of radius of influence r.v.'s and Bernoulli activation r.v.s 
	till the {\it infinite future}. To deal with that for $u \in \Z$ and $n \geq 1$ 
	we define a truncated version of the domination  event as follows:
	\begin{align}
		\label{def:Dom_Trunc_Event}  
		\text{Dom}^{(n)}(u) & := \{ \max(\A^{R,(-\infty,u)}_m) \leq \max(\A^{R^+,u}_m) 
		\text{ for all } 1 \leq m \leq n\},
	\end{align}
	\update{which means that domination of the rumour propagation continues till the next $n$ steps only. Next lemma allows us to decouple a renewal event as joint occurrence of two independent events.}
	\begin{lemma}
		\label{lem:EventDecomposition_1}
		For any $w \in \N$ we have the following equality of events:
		\begin{align}\label{eq:event_decom}
			\{\tau^R_1 = l, r^R_l =  w \}  &  = \Bigl [ \{ r^R_l =  w \} \cap 
			\bigl [ \cap_{j=0}^{l-1}
			\bigl ( \{ \beta^R(r^R_j) \leq l - j \}  \bigr ) \bigr ] \Bigr ] 
			\bigcap (\text{Dom}(w)),
		\end{align} 
		\update{where the two events on the r.h.s. are independent of each other.}
	\end{lemma}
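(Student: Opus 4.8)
The plan is to verify the two set inclusions between the left-hand side and right-hand side of \cref{eq:event_decom}, and then argue independence of the two events on the right by identifying the disjoint families of underlying i.i.d.\ random variables on which each depends. First I would unwind the definition of $\tau^R_1$ from \cref{def:Tau_Reactivation}: the event $\{\tau^R_1 = l\}$ says that $\text{Dom}(r^R_m)$ fails for every $m$ with $0 \le m \le l-1$ and that $\text{Dom}(r^R_l)$ occurs. The key observation is that, by the definition of $\beta^R(u)$ in \cref{def:Beta_Reactivate}, the event $\{\text{Dom}(u)\}$ is exactly $\{\beta^R(u) = \infty\}$, so \emph{non-occurrence} of $\text{Dom}(r^R_j)$ at step $j$ is the event $\{\beta^R(r^R_j) < \infty\}$. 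To get the finer statement $\{\beta^R(r^R_j) \le l - j\}$ appearing on the right-hand side, I would use that on the event $\{\tau^R_1 = l, r^R_l = w\}$ the rumour cluster up to time $l$ is determined, and a failure of domination starting from $r^R_j$ that had not yet been witnessed by time $l$ would contradict the occurrence of $\text{Dom}(r^R_l) = \text{Dom}(w)$ — here one uses the coupling of the processes in \eqref{eq:Processes} together with the monotonicity remark that $\A^{R,+,(-\infty,u)}_n = \A^{R,(-\infty,u)}_n$ a.s. So the conditioning on $\{r^R_l = w\}$ pins down exactly when each domination-from-$r^R_j$ attempt must have already broken.

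Next I would handle the reverse inclusion: assume the right-hand side, i.e.\ $r^R_l = w$, every $\beta^R(r^R_j) \le l-j$ for $0 \le j \le l-1$, and $\text{Dom}(w)$ occurs. The bounds $\beta^R(r^R_j) \le l-j$ force $\text{Dom}(r^R_j)$ to fail for each such $j$ (since $\beta^R(r^R_j)$ is finite), so no renewal step occurred at times $0,\dots,l-1$; combined with $\text{Dom}(w) = \text{Dom}(r^R_l)$ occurring, this is precisely $\tau^R_1 = l$ with $r^R_l = w$. This direction is essentially bookkeeping once the equivalence $\{\text{Dom}(u)\} = \{\beta^R(u)=\infty\}$ and the truncation relation to $\text{Dom}^{(n)}(u)$ from \cref{def:Dom_Trunc_Event} are in hand.

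For independence, the plan is to note that the event inside the first bracket on the right-hand side — namely $\{r^R_l = w\} \cap \bigcap_{j=0}^{l-1}\{\beta^R(r^R_j) \le l-j\}$ — is measurable with respect to the $\sigma$-field generated by the radii $\{I^m_z : 0 \le m \le l-1\}$ and Bernoulli clocks $\{B^{m+1}_z : 0 \le m \le l-1\}$ with $z$ ranging over $(-\infty, w]\cap\Z$, i.e.\ it is ${\cal F}^R_{l+1}$-type measurable and involves only the variables indexed by time strata $\le l-1$; one checks this using that each $\beta^R(r^R_j) \le l-j$ is witnessed by a breakdown at some step $\le l$, hence uses only those strata, and that $r^R_l = w$ likewise depends only on those strata. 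On the other hand $\text{Dom}(w) = \{\max \A^{R,+,(-\infty,w)}_n \le \max\A^{R,+,w}_n \text{ for all } n\ge 1\}$, started from the configuration at the $l$-th step, depends — by the Markov property of the coupled processes and the translation/time-homogeneity of the i.i.d.\ families $\{I^n_z\}$ and $\{B^n_z\}$ — only on the variables $\{I^m_z, B^{m+1}_z : m \ge l\}$ relative to that configuration. Since these two families of indices are disjoint, the two events are independent.

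The main obstacle I anticipate is the careful argument that, on $\{\tau^R_1 = l, r^R_l = w\}$, the domination attempt started from $r^R_j$ must break by time $l$ rather than merely eventually — i.e.\ proving $\beta^R(r^R_j) \le l-j$ and not just $\beta^R(r^R_j) < \infty$. This requires showing that if the one-sided dominating process $\A^{R,+,r^R_j}$ stayed above $\A^{R,(-\infty,r^R_j)}$ through time $l$, then it would also stay above at all later times with the same realization, which would make step $j$ a renewal step and contradict $\tau^R_1 = l$. This hinges on the coupling consistency in \eqref{eq:Processes} and on the identity $\A^{R,+,(-\infty,u)}_n = \A^{R,(-\infty,u)}_n$, and it is the one place where one must be genuinely careful about how the truncated event $\text{Dom}^{(n)}(u)$ relates to the full event $\text{Dom}(u)$ under the fixed coupling.
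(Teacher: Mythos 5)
Your proposal is correct and follows essentially the same route as the paper: rewrite $\{\tau^R_1=l\}$ as failure of the renewal event at steps $0,\dots,l-1$ (i.e.\ $\beta^R(r^R_j)<\infty$), upgrade each $\{\beta^R(r^R_j)<\infty\}$ to $\{\beta^R(r^R_j)\le l-j\}$ by noting that $\text{Dom}^{(l-j)}(r^R_j)$ together with $\text{Dom}(r^R_l)$ forces $\text{Dom}(r^R_j)$, and obtain independence because the bracketed event is ${\cal F}^R_l$-measurable while $\text{Dom}(w)$ is supported on the time strata from $l$ onward. The crux you flag in your last paragraph is exactly the step the paper isolates in its proof of \cref{eq:dom_eq}.
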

	\begin{proof}
		By definition of the step $\tau_1$, we have 
		\begin{align*}
			\{\tau^R_1 = l, r^R_l = w\} = & \{ r^R_l = w, \text{Dom}(w)\} 
			\bigcap \bigl [ \cap_{j=0}^{l-1} ( \text{renewal does not occur at } r^R_j) \bigr]\\ 
			= & \{ r^R_l = w, \text{Dom}(w)\} \bigcap 
			\bigl[ \cap_{j=0}^{l-1}  (\beta^R(r^R_j) < \infty ) \bigr].
		\end{align*}
		In order to complete the proof, for any $0 \leq j \leq l-1$ we need to show that
		\begin{align}\label{eq:dom_eq}
			\{ r^R_l = w, \text{Dom}(w), \beta^R(r^R_j) < \infty \} =
			\{ r^R_l = w, \text{Dom}(w),\beta^R(r^R_j) \leq l - j \}.  
		\end{align}
		
		We prove it for $j = 0$ and the argument is exactly the same for general $j \geq 1$.
		On the event 
		$$
		\{r^R_l = w, \text{Dom}(w), \beta^R(r^R_0) = \beta(0) > l  \},
		$$
		the truncated event $\text{Dom}^l(0)$ must have occurred.
		Otherwise, $\beta^R(0)$ must be smaller than $l$. 
		Further, the event $\text{Dom}(r^R_l) \cap \text{Dom}^l(0)$ implies the occurrence 
		of the event $\text{Dom}(0)$. 
		Hence, on the event $\{ r^R_l = w, \text{Dom}(w), \beta^R(0) > l \}$, 
		the r.v. $\beta^R(0)$ must take the value $+\infty$. This implies that 
		$ \{ r^R_l = w, \text{Dom}(w), l < \beta^R(0) < +\infty \} = \emptyset $
		and completes the proof of \cref{eq:dom_eq} for $j=0$.
		Finally, we observe that the two events in the r.h.s. of \cref{eq:event_decom} depend on disjoint sets of r.v.s as the event $$ \{ r^R_l = w \} \cap \bigl [ \cap_{j=0}^{l-1}
		\bigl ( \{ \beta^R(r^R_j) \leq l - j \}  \bigr ) \bigr ] $$
		is ${\cal F}^R_l$ measurable where ${\cal F}^R_l$ is defined as in \cref{def:filtration}. Therefore, they are independent and this completes the proof.
	\end{proof}
	We would like to have a similar result for the $\tau_j$-th step for $ j \geq 2$. 
	For $m_1, m_2 \in \N$ with $m_1 < m_2$ we consider the event
	$$
	\{ \tau^R_1 = m_1, r^R_{m_1} = w_1, \tau^R_2 = m_2, r^R_{m_2} = w_2 \}.
	$$
	The main difficulty is that, both the events $\text{Dom}(w_1)$  and $\text{Dom}(w_2)$
	depend on the infinite future. To deal with this, we observe the following equality of events
	\begin{align*}
		\{r^R_{m_1} = w_1, \text{Dom}(w_1) , r^R_{m_2} = w_2, \text{Dom}(w_2)\} 
		& = \{r^R_{m_1} = w_1, \text{Dom}^{(m_2 - m_1)}(w_1) , r^R_{m_2} = w_2, \text{Dom}(w_2)\}.
	\end{align*}
	\update{
		Using the above decomposition, the same argument as in \Cref{lem:EventDecomposition_1} allows us to express the event $
		\{ \tau^R_1 = m_1, \tau^R_2 = j_2, r^R_{m_1} = w_1, r^R_{m_2} = w_2 \}$ as
		\begin{align}
			\label{eq:EqualityEvent_3}
			& \Bigl [ \{  r^R_j \text{ is not a }\tau^R \text{ step for all } 1 \leq j < m_1, r_{m_1} = w_1, 
			\text{Dom}^{ (m_2 - m_1)}(w_1) \}  \cap  \nonumber\\
			& \qquad \{ r^R_j \text{ is not a }\tau^R \text{ step for any }  m_1 < j < m_2, 
			r^R_{m_2} = w_2\} \Bigr ]  \bigcap \text{Dom}(w_2). 
		\end{align}
		As observed earlier, other than the event $\text{Dom}(w_2)$ in 
		\cref{eq:EqualityEvent_3}, rest of the events are ${\cal F}^R_{m_2}$ measurable.
		In fact, \cref{eq:EqualityEvent_3} can be further strengthened 
		for any $j \geq 1$ as described in the following corollary.}

	\begin{corollary}
		\label{cor:IncrDistEquality_1}
		For any $j \geq 1$ we obtain the following equality of events
		\begin{align}
			\label{eq:EventEquality_4}
			& \bigcap_{l=1}^j \{\tau^R_l = m_l, r^R_{m_l} = w_l \} =  \nonumber \\
			& \qquad \Bigl [ \bigcap_{l=1}^j  \{ r^R_{m_l} = w_l \}\cap 
			\bigl \{ \cap_{n = m_{l-1} + 1}^{m_l - 1} 
			( \beta^R(r^R_n) \leq m_l - n)\bigr \}\bigcap (\cap_{l=1}^{j-1}
			\text{Dom}^{ (m_{(l+1)} - m_l)}(w_l) )\Bigr ]\bigcap \text{Dom}(w_j),
		\end{align} 
		and other than the event $\text{Dom}(w_j)$, rest of the other events 
		in the r.h.s. of \cref{eq:EventEquality_4} are ${\cal F}^R_{m_l}$ measurable. 
	\end{corollary}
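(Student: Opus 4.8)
The proof is by induction on $j$. The base case $j=1$ is exactly \Cref{lem:EventDecomposition_1}, and the case $j = 2$ is the content of \cref{eq:EqualityEvent_3}; the general step follows the same two ideas. Assume \cref{eq:EventEquality_4} holds with $j$ replaced by $j-1$ and intersect both sides with $\{\tau^R_j = m_j, r^R_{m_j} = w_j\}$. Two modifications are needed to bring the right-hand side into the stated form. First, the requirement that no renewal occurs at the steps $m_{j-1} < n < m_j$ must be re-expressed: on the event $\{\tau^R_j = m_j, r^R_{m_j} = w_j, \text{Dom}(w_j)\}$ the condition ``$\beta^R(r^R_n) < \infty$ for all $m_{j-1} < n < m_j$'' is equivalent to ``$\beta^R(r^R_n) \le m_j - n$ for all $m_{j-1} < n < m_j$'', by the same reasoning as in \Cref{lem:EventDecomposition_1}: if $\beta^R(r^R_n) > m_j - n$ then the truncated domination event $\text{Dom}^{(m_j - n)}(r^R_n)$ holds, and combined with $\text{Dom}(w_j) = \text{Dom}(r^R_{m_j})$ this forces $\text{Dom}(r^R_n)$, i.e.\ a renewal at $n$, a contradiction. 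Second, the factor $\text{Dom}(w_{j-1})$ appearing in the $(j-1)$-fold decomposition is not ${\cal F}^R_{m_j}$ measurable and must be replaced by the truncated event $\text{Dom}^{(m_j - m_{j-1})}(w_{j-1})$.

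Both modifications rest on one \emph{concatenation} property of domination events: on $\{r^R_{m_{j-1}} = w_{j-1}, r^R_{m_j} = w_j\}$,
$$\text{Dom}^{(m_j - m_{j-1})}(w_{j-1}) \cap \text{Dom}(w_j) \subseteq \text{Dom}(w_{j-1}) \subseteq \text{Dom}^{(m_j-m_{j-1})}(w_{j-1}).$$
The right-hand inclusion is immediate from \cref{def:Dom_Event} and \cref{def:Dom_Trunc_Event}. For the left-hand inclusion one uses the coupling \cref{eq:Processes} together with the monotonicity $\max(\A^{R,+,I_1}_n) \le \max(\A^{R,+,I_2}_n)$ whenever $I_1 \cap \Z \subseteq I_2 \cap \Z$, and the identity $\A^{R,+,(-\infty,u)}_n = \A^{R,(-\infty,u)}_n$ recorded after \cref{def:Dom_Event}: up to time $m_j - m_{j-1}$ the process started from $(-\infty, w_{j-1})$ stays weakly below the one started from $\{w_{j-1}\}$, whose rightmost vertex at time $m_j - m_{j-1}$ equals $w_j$ on our event; restarting the dominating process from $\{w_j\}$ at that time keeps it weakly above the lower configuration by monotonicity, and $\text{Dom}(w_j)$ then propagates the domination to all later times. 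Consequently, on $\{r^R_{m_j} = w_j, \text{Dom}(w_j)\}$ the events $\text{Dom}(w_{j-1})$ and $\text{Dom}^{(m_j - m_{j-1})}(w_{j-1})$ coincide, which justifies the replacement.

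After these two substitutions the right-hand side of \cref{eq:EventEquality_4} splits as the intersection of the big bracket — whose constituents are the location events $\{r^R_{m_l} = w_l\}$, the constraints $\{\beta^R(r^R_n) \le m_l - n\}$, and the truncated domination events $\text{Dom}^{(m_{l+1}-m_l)}(w_l)$ for $l < j$, all of which depend only on $\{(I^m_z, B^{m+1}_z) : z \le r^R_{m_j - 1},\ 0 \le m \le m_j - 1\}$ and hence are ${\cal F}^R_{m_j}$ measurable — with the event $\text{Dom}(w_j)$, which is a function of $\{(I^m_z, B^{m+1}_z) : m \ge m_j\}$ alone. The disjointness of these two index sets gives the measurability assertion of the corollary and, via the i.i.d.\ nature of the underlying families, the independence that later yields i.i.d.\ renewal increments. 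I expect the main obstacle to be the clean verification of the concatenation inclusion above: one must keep careful track of how the initial active set of the dominating one-sided process is reset at an intermediate renewal time and check that the coupling \cref{eq:Processes} genuinely preserves the monotonicity $\max(\A^{R,+,I_1}_n) \le \max(\A^{R,+,I_2}_n)$ through such a restart.
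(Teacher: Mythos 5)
Your proof is correct and follows essentially the same route as the paper, which simply states that the argument is identical to that of \Cref{lem:EventDecomposition_1} (iterated via the two-step decomposition in \cref{eq:EqualityEvent_3}) and omits the details; your induction, the re-expression of non-renewal as $\beta^R(r^R_n)\le m_l-n$, and the replacement of $\text{Dom}(w_l)$ by its truncation $\text{Dom}^{(m_{l+1}-m_l)}(w_l)$ are exactly the two ingredients the paper uses. The concatenation inclusion you flag as the main obstacle is precisely the assertion the paper itself makes without further justification in the proof of \Cref{lem:EventDecomposition_1} (namely that $\text{Dom}(r^R_l)\cap\text{Dom}^{l}(0)$ implies $\text{Dom}(0)$), so your treatment is no less rigorous than the original.
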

	The argument is same as that of \Cref{lem:EventDecomposition_1} and therefore, we skip the details. 
	For simplicity of notation, we denote the  ${\cal F}^R_{m_l}$ measurable part of the event in the r.h.s. of \cref{eq:EventEquality_4} as 
	$$
	E(w_1, m_1, \cdots , w_j, m_j) := 
	\Bigl [ \bigcap_{l=1}^j  \{ r^R_{m_l} = w_l \}\cap \bigl \{ \cap_{n = m_{l-1} + 1}^{m_l - 1} 
	( \beta^R(r_n) \leq m_l - n)\bigr \}  \bigcap (\cap_{l=1}^{j-1}
	\text{Dom}^{( m_{l+1} - m_l)}(w_l) )\Bigr ].
	$$
	It is also important to observe that the event $\text{Dom}(w_j)$ is actually independent of the event
	$E(w_1, m_1, \cdots , w_j, m_j)$. We need one more lemma before we proceed 
	to prove \Cref{prop:Tau_Tail_Reactivate} for $j \geq 1$.
	\begin{lemma}
		\label{lem:Reactivate_j}
		Fix any $j \geq 1$. Given $(r^R_{\tau^R_1},\tau^R_1, \cdots , r^R_{\tau^R_j}, \tau_j^R) = 
		(w_1,m_1, \cdots , w_j, m_j)$, for any $l \geq 1$ and $z \in \Z$ 
		there exist $C_0, C_1 > 0$, which do not 
		depend on $j, l, z$ such that  
		\begin{align*}
			\P \bigl( I^{\tau_j + l}_z > n \mid (r^R_{\tau^R_1},\tau^R_1, \cdots , r^R_{\tau^R_j}, 
			\tau^R_j) = (w_1, m_1, \cdots , w_j, m_j) \bigr )  \leq C_0 e^{(-C_1 n)}.  
		\end{align*}
	\end{lemma}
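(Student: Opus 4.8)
The plan is to peel off, via \Cref{cor:IncrDistEquality_1}, everything in the conditioning event except a single infinite‑horizon domination event, and then to use a time‑index decoupling: the radius r.v.\ $I^{\tau_j+l}_z$ with $l\ge 1$ is governed only by randomness indexed by times $\ge m_j$, which is independent of the $\mathcal{F}^R_{m_j}$‑measurable part of the conditioning event. Concretely, first I would fix $(w_1,m_1,\cdots,w_j,m_j)$ and invoke \Cref{cor:IncrDistEquality_1}: on the event $\{(r^R_{\tau^R_1},\tau^R_1,\cdots,r^R_{\tau^R_j},\tau^R_j)=(w_1,m_1,\cdots,w_j,m_j)\}$ one has $\tau_j=m_j$, and this event equals $E(w_1,m_1,\cdots,w_j,m_j)\cap\text{Dom}(w_j)$, where $E:=E(w_1,m_1,\cdots,w_j,m_j)$ is $\mathcal{F}^R_{m_j}$‑measurable and is independent of $\text{Dom}(w_j)$; in particular on this event $I^{\tau_j+l}_z=I^{m_j+l}_z$.

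The key observation I would then use is that, for $l\ge 1$, both the r.v.\ $I^{m_j+l}_z$ and the event $\text{Dom}(w_j)$ (which describes the evolution of the reactivation process only from step $m_j$ onward) are measurable with respect to the families $\{I^m_v:v\in\Z,\ m\ge m_j\}$ and $\{B^{m+1}_v:v\in\Z,\ m\ge m_j\}$, while $E$, being $\mathcal{F}^R_{m_j}$‑measurable, depends only on the complementary families with $m\le m_j-1$. Since all the radius and Bernoulli r.v.'s are mutually independent, $E$ is independent of the pair $\bigl(\mathbf{1}\{I^{m_j+l}_z>n\},\mathbf{1}_{\text{Dom}(w_j)}\bigr)$, and a one‑line conditional‑probability computation gives
\[
\P\bigl(I^{\tau_j+l}_z>n \mid (r^R_{\tau^R_1},\tau^R_1,\cdots,r^R_{\tau^R_j},\tau^R_j)=(w_1,m_1,\cdots,w_j,m_j)\bigr)=\P\bigl(I^{m_j+l}_z>n \mid \text{Dom}(w_j)\bigr)\le\frac{\P(I^{m_j+l}_z>n)}{\P(\text{Dom}(w_j))}.
\]

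To finish, I would bound the numerator by assumption (a) on the radii of influence, namely $\P(I^{m_j+l}_z>n)=\P(I^0_0>n)\le C_0 e^{(-C_1 n)}$, and the denominator from below using translation invariance: $\P(\text{Dom}(w_j))=\P(\text{Dom}(0))$, and $\text{Dom}(0)$ is precisely the event that the $0$‑th step is a renewal step, so (with $\mathcal{F}^R_0$ trivial) \Cref{lem:Reactivate_Tau_LowerBd} yields $\P(\text{Dom}(0))\ge\tilde{p}>0$. Combining the two bounds gives the claim with constants $C_0/\tilde{p}$ and $C_1$ that do not depend on $j$, $l$ or $z$.

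The main obstacle is not this short computation but the structural bookkeeping that \Cref{cor:IncrDistEquality_1} encodes: one must be sure that after the decomposition only the \emph{last} domination factor $\text{Dom}(w_j)$ remains an infinite‑horizon event, that it is genuinely measurable with respect to time‑indices $\ge m_j$, and that all earlier renewal constraints have been absorbed into the truncated events $\text{Dom}^{(m_{l+1}-m_l)}(w_l)$ and hence into the $\mathcal{F}^R_{m_j}$‑measurable event $E$. Once that decoupling is established, the independence of $E$ from the future randomness, and thus the uniform exponential bound, follow immediately.
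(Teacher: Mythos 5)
Your proposal is correct and follows essentially the same route as the paper: decompose the conditioning event via \Cref{cor:IncrDistEquality_1} into the $\mathcal{F}^R_{m_j}$-measurable part $E$ and the single infinite-horizon event $\text{Dom}(w_j)$, use independence of $E$ from the future-indexed randomness to reduce to $\P\bigl((I^{m_j+l}_z>n)\cap\text{Dom}(w_j)\bigr)/\P(\text{Dom}(w_j))$, and bound this by $\P(I^0_0>n)/\P(\text{Dom}(0))$ via translation invariance and the exponential tail assumption. Your explicit appeal to \Cref{lem:Reactivate_Tau_LowerBd} for the lower bound $\P(\text{Dom}(0))\geq\tilde{p}$ is a minor (and welcome) addition of detail that the paper leaves implicit.
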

	\begin{proof}
		Using the event equality in (\ref{eq:EventEquality_4}), and from conditional probability we obtain
		{ \small
			\begin{align}\label{eq:Reactivate_j-s1}
				& \P \bigl( I^{\tau^R_j + l}_z > n \mid (r_{\tau^R_1},\tau^R_1, \cdots , 
				r^R_{\tau^R_j}, \tau^R_j) = (w_1, m_1, \cdots , w_j, m_j) \bigr ) \nonumber \\
				& = \P \bigl( I^{\tau^R_j + l}_z > n \cap E(w_1, m_1, \cdots , w_j, m_j) \cap \text{Dom}(w_j) \bigr )/
				\P(E(w_1, m_1, \cdots , w_j, m_j) \cap \text{Dom}(w_j)). 
		\end{align}}
		The event $(I^{\tau^R_j + l}_z > n) \cap \text{Dom}(w_j)$ is 
		independent of the event $E(w_1, m_1, \cdots , w_j, m_j)$. Earlier we observed that the event $\text{Dom}(w_j)$ is independent of the event $E(w_1, m_1, \cdots , w_j, m_j)$ as well.
		Hence, \cref{eq:Reactivate_j-s1} reduces to  
		\begin{align}
			& \P \bigl( I^{\tau_j + l}_z > n \mid (r^R_{\tau^R_1},\tau^R_1, 
			\cdots , r^R_{\tau^R_j}, \tau^R_j) = (w_1, m_1, \cdots , w_j, m_j) \bigr ) \nonumber \\
			= & \P \bigl( (I^{\tau^R_j + l}_z > n) \cap \text{Dom}(w_j) \bigr )/
			\P(\text{Dom}(w_j)) \nonumber \\
			\leqslant & \P \bigl( I^{\tau^R_j + l}_z > n \bigr ) \P(\text{Dom}(0))^{-1} \nonumber\\
			\leq & C_0 e^{(-C_1 n)}, \nonumber
		\end{align}
		for some $C_0, C_1 > 0$. The penultimate inequality uses the translation invariant nature of our model.
	\end{proof}
	Now we are ready to prove \Cref{prop:Tau_Tail_Reactivate} for $j \geq 1$. 
	
	\begin{proof}[\bf Proof of \Cref{prop:Tau_Tail_Reactivate} for $j \geq 1$] 
		We begin our proof by first fixing $j \geq 1$. Then, given the value of 
		$$
		(r^R_{\tau^R_1},
		\tau^R_1, \cdots , r^R_{\tau^R_j}, \tau^R_j) = (w_1,m_1, \cdots , w_j, m_j),
		$$
		for any $l \geq 1$ we can write the conditional probability as: 
		\begin{align}\label{eq:tau_tail:s1}
			& \P \bigl( \text{Dom}(r^R_{\tau^R_j + l}) \mid (r^R_{\tau^R_1},
			\tau^R_1, \cdots , r^R_{\tau^R_j}, \tau^R_j) = (w_1, m_1, \cdots , w_j, m_j) \bigr ) \nonumber \\   
			= & \P \bigl ( \text{Dom}(r^R_{m_j + l}) \cap ( E(w_1,m_1,\cdots , w_j,m_j)
			\cap \text{Dom}(w_j) )\bigr )/ \P(E(w_1,m_1, \cdots , w_j, m_j)\cap \text{Dom}(w_j) ). 
		\end{align}
		From \Cref{cor:IncrDistEquality_1}, the RHS of \cref{eq:tau_tail:s1} is equal to: 
		\begin{align}\label{eq:tau_tail:s2}
			\P \bigl ( \text{Dom}(r^R_{m_j + l}) \cap ( E(w_1, m_1, \cdots , w_j, m_j)
			\cap \text{Dom}^{(l)}(w_j) )\bigr )/ 
			\P(E(w_1, m_1, \cdots , w_j, m_j)\cap \text{Dom}(w_j) ).  
		\end{align}
		Further, as commented earlier, occurrence of the event $\text{Dom}(r^R_{m_j + l})$
		depends on the collection $\{ (I^n_w, B^n_w) : w \in \Z, n \geq m_j + l \}$ and it is 
		independent of every ${\cal F}^R_{m_j + l}$ measurable event. Therefore, we can simplify the expression in (\ref{eq:tau_tail:s2}) as, 
		\begin{align}\label{eq:tau_tail:s3}
			\P(\text{Dom}(r^R_{m_j + l}))\Bigl ( \P \bigl ( E(w_1, m_1, \cdots , w_j, m_j)
			\cap \text{Dom}^{(l)}(w_j) \bigr )/ 
			\P(E(w_1, m_1, \cdots , w_j, m_j)\cap \text{Dom}(w_j) ) \Bigr) . 
		\end{align}
		\update{From the definition of a renewal step it follows that rumour propagation through vertices in the set $( -\infty, r^R_{\tau^R_j})\cap \Z$ has to be dominated  and therefore, the translation invariance  nature of our model ensures that $\P(\text{Dom}(r^R_{m_j + l}))$ in (\ref{eq:tau_tail:s3}) is at least as large as  $\P(\text{Dom}(0))$. 
			On using the fact that 
			$$
			\bigl (E(w_1, m_1, \cdots , w_j, m_j)\cap \text{Dom}(w_j) \bigr )\subset 
			\bigl ( E(w_1, m_1, \cdots , w_j, m_j)\cap \text{Dom}^{(l)}(w_j) \bigr ), 
			$$ 
			the expression in (\ref{eq:tau_tail:s3}) is lower bounded by $\P(\text{Dom}(0))$.
		}

		This shows that given $(r^R_{\tau^R_1},\tau^R_1, \cdots , r^R_{\tau^R_j}, \tau^R_j)
		= (w_1,m_1, \cdots , w_j, m_j)$, for any $l \geq 1$ the probability of the event $\text{Dom}(r^R_{\tau^R_j + l})$ has a strictly positive uniform lower bound. 
		
		\update{The same argument of \Cref{prop:Tau_Tail_Reactivate} for $j=0$ together with  \Cref{cor:E2_Event} give us that, given 
			$(r^R_{\tau^R_1}, \tau^R_1, \cdots , r^R_{\tau^R_j}, \tau^R_j) 
			= (w_1,m_1, \cdots , w_j, m_j)$, it suffices to show that the collection $\{ O^{(-\infty, r^R_{\tau^R_j + l}- 1],+}_{\tau^R_j + l + n} 
			: l \geq 1 \}$ forms an uniform exponentially decaying family in the sense of  \Cref{def:Exp_Tail}.} 
		
		We recall that for each $n \geq 1$ part (iii) of \Cref{lem:rumour-block} holds  as long as the collection $\{ I_z^n : z \in \Z \}$ forms an uniform exponentially decaying family.  
		In the present set up, \Cref{lem:Reactivate_j} ensures that given 
		$(r^R_{\tau^R_1},\tau^R_1, \cdots , r^R_{\tau_j}, \tau^R_j) = (w_1,m_1, \cdots , w_j, m_j)$ for any $n \in \N$, the family $\{ I^{\tau^R_j + n}_z : z \in \Z \}$
		forms an uniform exponentially decaying family. As a result, for each $l \geq 1$ 
		the overshoot r.v. 
		$O^{(-\infty, r^R_{\tau^R_j + l}),+}_{\tau^R_j + l + n}$ has exponentially decaying tail such that decay constants do not depend on $l$. This completes the proof.
	\end{proof}
	Next, we show that increments of the rightmost process observed between successive 
	renewals form a strong uniform exponentially decaying family. 
	Together with the translation invariance nature of our model, \Cref{cor:E2_Event} and \Cref{lem:Reactivate_j} give us the following remark. 
	\begin{remark}
		\label{rem:Reactivate_j}
		\begin{itemize}
			\item[(i)] For any $z \in \Z$ and for any $m \in \N$ we consider the conditional overshoot r.v. 
			$O^{(-\infty, z],+}_m \mid \text{Dom}(0)$ and it is not difficult to see that there exist positive constants $C_0, C_1 $ not depending on $z, m$ such that we have
			$$
			\P \bigl ( O^{(-\infty, z],+}_{m} > n \mid \text{Dom}(0) \bigr ) \leq C_0 e^{(-C_1 n)}
			\text{ for all }n.
			$$
			This follows from the observation that 
			\begin{align*}
				\P \bigl ( O^{(-\infty, z],+}_{m} > n \mid \text{Dom}(0) \bigr ) 
				\leq \P ( O^{(-\infty, z],+}_{m} > n) / \P( \text{Dom}(0) ) =
				\P ( O^{(-\infty, 0],+} > n) / \P( \text{Dom}(0) ),
			\end{align*}
			where the last equality follows from the translation invariance nature of our model.
			
			\item[(ii)] Repeated applications of the above argument gives us that, given that the event $\text{Dom}(0)$ has occurred, for any $\Z$-valued sequence $\{ z_n : n \in \N \}$ the collection $\{O^{(-\infty, z_n],+}_{ n} \mid  : n \geq 1 \}$ forms an uniform exponentially decaying family. In fact, the same argument actually gives us that $\{O^{(-\infty, z_n],+}_{ n}   : n \geq 1 \}$ forms a strong exponentially decaying family. 
		\end{itemize}
	\end{remark}
	\begin{lemma} 
		\label{lem:Tau_Tail_Reactivate_1}
		There exist $C_0, C_1 > 0$ which do not depend on $j \geq 0$ such that 
		for any $j \geqslant 0$ we have
		$$
		\P(r^R_{\tau^R_{j+1}} - r^R_{\tau^R_{j}} > n \mid {\cal S}^R_j) \leqslant C_0 e^{(-C_1 n)}  \text{ for all }n \in \N.
		$$
	\end{lemma}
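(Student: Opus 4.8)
\emph{Proof plan.} The plan is to dominate the increment $r^R_{\tau^R_{j+1}} - r^R_{\tau^R_j}$ by a random sum of overshoot random variables and then appeal to \Cref{lem:RandomSumExpTail}. First I would write the increment as the telescoping sum
$$
r^R_{\tau^R_{j+1}} - r^R_{\tau^R_j} = \sum_{m = \tau^R_j + 1}^{\tau^R_{j+1}} \bigl ( r^R_m - r^R_{m-1} \bigr ).
$$
At each step, any vertex capable of pushing the rightmost point further right—be it a newly heard vertex or a reactivated one—lies in $(-\infty, r^R_{m-1}]\cap \Z$, hence it uses a radius $I^{m-1}_z$ with $z \leq r^R_{m-1}$; therefore $r^R_m - r^R_{m-1} \leq O^{(-\infty, r^R_{m-1}],+}_{m-1}$ a.s., and consequently
$$
r^R_{\tau^R_{j+1}} - r^R_{\tau^R_j} \leq \sum_{m = \tau^R_j + 1}^{\tau^R_{j+1}} O^{(-\infty, r^R_{m-1}],+}_{m-1}.
$$

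Next I would control the number of summands and the summands themselves separately, both conditionally on ${\cal S}^R_j$. The number of summands equals $\tau^R_{j+1} - \tau^R_j$, which by \Cref{prop:Tau_Tail_Reactivate} has an exponentially decaying conditional tail with constants not depending on $j$. For the summands, the delicate point is that conditioning on ${\cal S}^R_j$—in particular on the renewal event $\text{Dom}(r^R_{\tau^R_j})$—reshapes the joint law of the radii $\{ I^n_z : n \geq \tau^R_j,\ z \in \Z \}$ all the way into the infinite future, so one cannot simply declare the overshoots to be i.i.d. copies of $O$. This is exactly what \Cref{lem:Reactivate_j} and item (ii) of \Cref{rem:Reactivate_j} are designed to absorb: conditionally on $\text{Dom}(0)$, for any $\Z$-valued sequence $\{ z_n : n \geq 1 \}$ the family $\{ O^{(-\infty, z_n],+}_n : n \geq 1 \}$ is a strong exponentially decaying family in the sense of \Cref{def:Exp_Tail}, with constants independent of the sequence; by translation invariance the same holds with $\text{Dom}(0)$ replaced by the renewal configuration underlying ${\cal S}^R_j$. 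Applying this with the sequence $z_n := r^R_{\tau^R_j + n - 1}$ shows that, conditionally on ${\cal S}^R_j$, the summands appearing in the displayed bound form a strong exponentially decaying family with uniform constants.

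Finally, since \Cref{lem:RandomSumExpTail} does not require independence between the number of terms and the summands, it applies to the above random sum and produces constants $C_0, C_1 > 0$, not depending on $j$, such that $\P\bigl( r^R_{\tau^R_{j+1}} - r^R_{\tau^R_j} > n \mid {\cal S}^R_j \bigr) \leq C_0 e^{(-C_1 n)}$ for all $n \in \N$, which is the claim. I expect the main obstacle to be precisely the middle step: one must verify that conditioning on ${\cal S}^R_j$—which is not ${\cal F}^R$-measurable and depends on the infinite future through the $\text{Dom}$ events—does not spoil the uniformity of the exponential constants. This is handled with the same event-decomposition bookkeeping as in \Cref{lem:EventDecomposition_1}, \Cref{cor:IncrDistEquality_1} and the conditional tail bound \Cref{lem:Reactivate_j}, used exactly as in the proof of \Cref{prop:Tau_Tail_Reactivate} for $j \geq 1$.
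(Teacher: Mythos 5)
Your proposal is correct and follows essentially the same route as the paper's (sketched) proof: telescoping the increment over the steps between consecutive renewals, dominating each one-step increment by a conditional overshoot random variable, invoking \Cref{lem:Reactivate_j} and \Cref{rem:Reactivate_j} for the uniform strong exponential decay of these overshoots under the conditioning on ${\cal S}^R_j$, \Cref{prop:Tau_Tail_Reactivate} for the tail of the number of summands, and the random-sum tail lemma to conclude. The only cosmetic difference is the time index you attach to the overshoot variables, which does not affect the argument.
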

	\begin{proof}
		We have used similar arguments in this paper.
		Therefore, we only provide a sketch here. We observe that 
		\begin{align}
			\label{eq:Reactivate_Rtmost_Incr}  
			(r^R_{\tau^R_{j+1}} - r^R_{\tau^R_{j}}) = \sum_{m = 1}^{\tau^R_{j+1} - \tau^R_{j}} 
			(r^R_{\tau^R_{j} + m} - r^R_{\tau^R_{j} + m - 1}) .
		\end{align}
		Note that given ${\cal S}^R_j$, the increment r.v. 
		$(r^R_{\tau^R_{j} + m} - r^R_{\tau^R_{j} + m - 1})$ is  stochastically dominated by the overshoot r.v. 
		$O^{(-\infty,r^R_{\tau^R_{j} + m - 1}],+}_{\tau^R_{j} + m} $. Because of \Cref{rem:Reactivate_j}, \Cref{prop:Tau_Tail_Reactivate} together with Corollary \ref{lem:RandomSumMoment}
		give us that the random sum in (\ref{eq:Reactivate_Rtmost_Incr}) decays exponentially and this completes the proof.
	\end{proof}
	
	\begin{prop} 
		\label{prop:Tau_IID_Reactivate} 
		$\{ (r^R_{\tau^R_{j+1}} - r^R_{\tau^R_{j}}, \tau^R_{j+1} - \tau^R_j) : j \geqslant 1\}$
		gives a collection of i.i.d. random vectors taking values in $\N \times \N$ whose 
		distribution does not depend on the starting point of the rumour propagation process. 
	\end{prop}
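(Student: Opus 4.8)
The plan is to follow the template of the proof of Proposition~\ref{prop:Tau_IID}: since Proposition~\ref{prop:Tau_Tail_Reactivate} already guarantees that every $\tau^R_j$ is a.s.\ finite, the increments are well defined, and it remains to show that they are i.i.d.\ and that their common law is insensitive to the starting interval. Independence of consecutive increments will come from the fact that they are supported on disjoint blocks of the underlying i.i.d.\ families, while their identical distribution — and simultaneously the independence of the starting point — will come from a regeneration argument. The whole purpose of the renewal step \eqref{def:Tau_Reactivation} is that, once it occurs, the influence of everything to the left of the current rightmost vertex has been ``shielded off'' by the domination event \eqref{def:Dom_Event}, so that the future of the rightmost process is driven only by a fresh one-sided-from-a-singleton process.

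First I would isolate the regeneration identity. Fix $j\ge 1$ and work on the event that a renewal occurs at step $\tau^R_j$, with $w_j:=r^R_{\tau^R_j}$, so that $\text{Dom}(w_j)$ holds — read as an event about the randomness $\{(I^{\tau^R_j+m}_z,\,B^{\tau^R_j+m+1}_z):z\in\Z,\ m\ge 0\}$ from time $\tau^R_j$ onwards. Since the rumour cluster at time $\tau^R_j$ lies in $(-\infty,w_j]\cap\Z=\bigl((-\infty,w_j)\cap\Z\bigr)\cup\{w_j\}$ — the union of the initial sets of the half-line process $\A^{R,+,(-\infty,w_j)}$ and the singleton process $\A^{R,+,w_j}$ — the monotone coupling underlying \eqref{eq:Processes}, combined with $\text{Dom}(w_j)$, gives $r^R_n\le\max\bigl(\A^{R,+,(-\infty,w_j)}_{n-\tau^R_j}\bigr)\vee\max\bigl(\A^{R,+,w_j}_{n-\tau^R_j}\bigr)=\max\bigl(\A^{R,+,w_j}_{n-\tau^R_j}\bigr)$ for all $n\ge\tau^R_j$, while the matching lower bound comes from the fact that $w_j$ itself belongs to the cluster. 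Hence, on a renewal step, the trajectory $\{r^R_n:n\ge\tau^R_j\}$ — and therefore the next renewal time $\tau^R_{j+1}$ and the displacement $r^R_{\tau^R_{j+1}}-r^R_{\tau^R_j}$ — is a fixed measurable functional $\Phi$ of the time-shifted randomness alone, since testing $\text{Dom}(r^R_n)$ at an intermediate step $\tau^R_j<n<\tau^R_{j+1}$ again involves only half-line and singleton processes built from this same shifted randomness, translated in space by $w_j$.

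Next I would feed this into the decoupling already available from Corollary~\ref{cor:IncrDistEquality_1} and Lemma~\ref{lem:Reactivate_j}. Conditioning on the history $(r^R_{\tau^R_1},\tau^R_1,\dots,r^R_{\tau^R_j},\tau^R_j)=(w_1,m_1,\dots,w_j,m_j)$ — equivalently, conditioning on ${\cal S}^R_j$ — is the same as conditioning on $E(w_1,m_1,\dots,w_j,m_j)\cap\text{Dom}(w_j)$, where $E(\cdots)$ is ${\cal F}^R_{m_j}$-measurable, hence a function of the pre-$m_j$ randomness, and is independent of $\text{Dom}(w_j)$; and the functional $\Phi$ depends only on the post-$m_j$ randomness, hence is independent of $E(\cdots)$. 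Therefore the conditional law of $(\tau^R_{j+1}-\tau^R_j,\,r^R_{\tau^R_{j+1}}-r^R_{\tau^R_j})$ given the whole history equals its conditional law given $\text{Dom}(w_j)$ alone, and by the time-stationarity of the families $\{I^n_z\}$ and $\{B^n_z\}$ together with the spatial translation invariance of the model this equals the law of $\Phi$ evaluated on the original families conditioned on $\text{Dom}(0)$. This last law depends neither on $j$, nor on $(w_1,m_1,\dots,w_j,m_j)$, nor on the initial active set, which yields both the i.i.d.\ property and the independence of the starting point; independence of the vectors can equally be read off, as in Proposition~\ref{prop:Tau_IID}, from the fact that consecutive increments are supported on disjoint blocks of the underlying families. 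That the vectors take values in $\N\times\N$ is immediate for the second coordinate since $\tau^R_{j+1}>\tau^R_j$, and for the first coordinate it holds a.s.\ because $r^R_n\ge X_n$ with $X_n/n\to\theta>0$, the positive-drift lower bound from Lemma~\ref{lem:Reactivate_Tau_LowerBd}.

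The main obstacle is the regeneration identity of the second paragraph, and in particular the claim that $\{r^R_n:n\ge\tau^R_j\}$ depends, on the renewal event, only on the post-$\tau^R_j$ randomness and on $w_j$: the upper bound is easy, but one must also check that on $\text{Dom}(w_j)$ the rightward reach of the running process is no less than that of the singleton process $\A^{R,+,w_j}$ restarted at time $\tau^R_j$. This is delicate precisely because the reactivation mechanism allows vertices of the running cluster lying strictly to the left of $w_j$ — and $w_j$ itself, which need not be freshly active at time $\tau^R_j$ — to re-enter play and drive the rightmost vertex forward; one has to argue that every such rightward excursion is already captured by the two comparison processes appearing in $\text{Dom}(w_j)$, so that $\text{Dom}(w_j)$ genuinely erases the dependence of the future rightmost process on the configuration prior to $\tau^R_j$. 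This bookkeeping is exactly the ``non-trivial modification'' of the renewal construction flagged earlier; once it is in place, the remaining steps mirror the proof of Proposition~\ref{prop:Tau_IID}.
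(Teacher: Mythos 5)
Your proposal is correct and follows essentially the same route as the paper: both arguments identify the conditional law of $(\tau^R_{j+1}-\tau^R_j,\,r^R_{\tau^R_{j+1}}-r^R_{\tau^R_j})$ given ${\cal S}^R_j$ with the law of the one-sided singleton process conditioned on $\text{Dom}(0)$, run until its own domination event first occurs, via the factorisation into the ${\cal F}^R_{m_j}$-measurable event $E(w_1,m_1,\dots,w_j,m_j)$ and the independent event $\text{Dom}(w_j)$, and then obtain joint independence by the tower-property induction on $m$. One caveat: your side remark that independence ``can equally be read off, as in \Cref{prop:Tau_IID}, from disjoint blocks'' is precisely what the paper warns against at the start of its proof --- the event $\text{Dom}(w_j)$ certifying the $j$-th renewal depends on the randomness of the entire infinite future, so consecutive increments are \emph{not} supported on disjoint blocks of the underlying families, and only your main conditional-law argument is valid. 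Your flagging of the regeneration identity (that on $\text{Dom}(w_j)$ the future rightmost trajectory coincides with that of the restarted singleton process) as the delicate point is fair; the paper asserts this step without further justification.
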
  
	\begin{proof}
		Unlike the proof of \Cref{prop:Tau_IID}, we need to be careful to argue about the independence of increment vectors between successive renewals as given that the renewal event has occurred 
		affects the distribution of the rumour propagation process till infinite time. 
		We first argue that the increment random vectors are identically distributed and 
		present a brief description of the identical distribution.

		The one-sided rumour propagation process  on $\Z^+$ starting from the origin is denoted
		by 
		$$
		\{ (\A^{R, +,0}_n, \tA^{R, +,0}_n) : n \geq 0\}.
		$$
		Given that the event $\text{Dom}(0)$ has occurred, we consider 
		the evolution of the conditional process 
		$$
		\{ (\A^{R,+,0}_n, \tA^{R, +,0}_n) : n \geq 0\} \mid \text{Dom}(0)
		$$ 
		till the  time that the event $\text{Dom}(r^{R, +,0}_n)$ occurs. 
		Let $\nu$ denote the first time that the event $\text{Dom}(r^{R, +,0}_n)$ has occurred. 
		The definition of our renewal event ensures that 
		$$
		(\tau^R_{j+1} - \tau^R_j, r^R_{\tau^R_{j+1}} - r^R_{\tau^R_{j}})  
		\stackrel{d}{=} (\nu, \max(\A^{R, +,0}_\nu)) \text{ for all }j \geqslant1.
		$$
		
		Next, we use the fact that the increment random vectors between 
		successive renewals are identically distributed to show that the increments are independently distributed as well. Recall that the random vector 
		$(\tau^R_{j}, r^R_{\tau^R_{j}})$ is ${\cal S}^R_{j}$ measurable where 
		${\cal S}^R_{j}$ is defined as in \cref{def:filtration}. 
		Fix $ m \geq 1 $ and Borel subsets $ B_2, \dotsc, B_{m+1} $ of $ \N \times \N$. Let
		$ I_{j + 1} ( B_{j +1} ) $ be the indicator random variable of the event 
		$$
		(\tau^R_{j+1} - \tau^R_j, r^R_{\tau^R_{j+1}} - r^R_{\tau^R_{j}})  \in B_{j+1} .
		$$
		Then, we have
		\begin{align*}
			& \P \bigl(  (\tau^R_{j+1} - \tau^R_j, r^R_{\tau^R_{j+1}} - r^R_{\tau^R_{j}}) 
			\in B_{j + 1} \text{ for } j =  1, \dotsc, m \bigr )
			= \E (  \prod_{ j = 1}^{m } I_{j + 1} ( B_{j + 1} )) \\
			&  = \E \Bigl( \E  \bigl( \prod_{ j = 1}^{m } I_{j +1} ( B_{j +1} )
			\mid {\cal S}^R_{m} \bigl) \Bigr) = 
			\E \Bigl( \prod_{ j = 1}^{m-1 } I_{j + 1} ( B_{j + 1} )
			\E  \bigl(  I_{m+1} ( B_{m+1} ) \mid {\cal S}^R_{m} \bigl) \Bigr)
		\end{align*}
		as the random variables $ I_{j+1} ( B_{j +1} )  $ are measurable w.r.t. $ {\cal S}^R_{m} $
		for $ j =  1, \dotsc, m-1 $. 
		
		By the earlier discussion, we have that the conditional distribution of
		$ (\tau^R_{m +1} - \tau^R_m, r^R_{\tau^R_{m+1}} - r^R_{\tau^R_{m}})$ 
		given $ {\cal S}^R_{m}$ is given by $(\nu, \max(\A^{R, +,0}_\nu))$. 
		Therefore, we have
		\begin{align*}
			& \P (  (\tau^R_{j+1} - \tau^R_j, r^R_{\tau^R_{j+1}} - r^R_{\tau^R_{j}}) 
			\in B_{j + 1 } \text{ for } j =  1, \dotsc, m )\\
			& =  \E \Bigl( \prod_{ j = 1}^{m-1 } I_{j + 1} ( B_{j + 1} )
			\E  \bigl(  I_{m+1} ( B_{m+1} ) \mid {\cal S}^R_{m} \bigl) \Bigr) \\
			& = \P \bigl(  (\nu, \max(\A^{R, +,0}_\nu)) \in B_{m+1} \bigr ) 
			\E \Bigl( \prod_{ j = 1}^{m - 1 } I_{j + 1} ( B_{j + 1} ) \Bigr).
		\end{align*}
		Now, induction on $m$ completes the proof.
	\end{proof}
	
	\subsection{Proof of \cref{thm:restarst_rightmost-speed}}
	\label{subsec:restarst_rightmost-speed}
	
	\update{The same argument as in \Cref{thm:rightmost-speed} gives us that 
		\begin{align*}
			\lim_{n \to \infty} r^R_n/n = \lim_{j \to \infty} r^R_{\tau^R_j}/\tau^R_j  = \mu^\prime \text{ almost surely}.
	\end{align*}}

	\section*{Compliance with Ethical Standards}
	
	We would like to declare that there are no conflicts of interest, either financial or non-financial, in the preparation of the manuscript. No funding was received to assist with the preparation of the manuscript. The authors are solely responsible for the correctness of the statements provided in the manuscript. 
	
	We have not used any data, materials, or codes in our work. We have not conducted any experiments or dealt with subjects for which ethics approvals or consents are required. Both of the authors have contributed equally to the work, and we don't require the consent of any other individual or authority for the publication of this manuscript. 
	
	\section*{Acknowledgement}
	
	The authors would like to thank the referees and the editors for their valuable comments, which has helped the authors in significantly improving their results.

\end{document}